\makeatletter \@addtoreset{equation}{section} \makeatother
\newcommand{\eref}[1]{(\ref{#1})}
\newcommand{\tref}[1]{Theorem \ref{#1}}
\newcommand{\pref}[1]{Proposition \ref{#1}}
\newcommand{\lref}[1]{Lemma \ref{#1}}
\newcommand{\cref}[1]{Corollary \ref{#1}}
\newcommand{\rref}[1]{Remark \ref{#1}}
\newcommand{\sref}[1]{Section \ref{#1}}
\newenvironment{spmatrix}{ \left(\begin{smallmatrix} }{ \end{smallmatrix}\right) }
\theoremstyle{plain} \newtheorem{thm}{Theorem}[section] \newtheorem{lem}{Lemma}[section] \newtheorem{prop}{Proposition}[section] \newtheorem{cor}{Corollary}[section]
\theoremstyle{definition} \newtheorem{rem}{Remark}[section] \newtheorem{defi}{Definition}[section]
\title[$L^\infty$-error estimates of FEM]{$L^\infty$- and $W^{1,\infty}$-error estimates of linear finite element method for Neumann boundary value problems in a smooth domain}
\author[Takahito Kashiwabara]{Takahito Kashiwabara}
\address{Graduate School of Mathematical Sciences, The University of Tokyo, 3-8-1 Komaba, Meguro, 153-8914 Tokyo, Japan}
\email{tkashiwa@ms.u-tokyo.ac.jp}
\author[Tomoya Kemmochi]{Tomoya Kemmochi}
\address{Department of Applied Physics, Graduate School of Engineering, Nagoya University, Furo-cho, Chikusa-ku, 464-8603 Nagoya, Japan}
\email{kemmochi@na.nuap.nagoya-u.ac.jp}
\subjclass[2010]{Primary: 65N30; Secondary: 65N15.}
\keywords{Finite element method; Pointwise error estimate; Domain perturbation}
\thanks{This work was supported by JSPS Grant-in-Aid for Young Scientists B (No.\ 17K14230).}
\begin{document}
\begin{abstract}
	Pointwise error analysis of the linear finite element approximation for $-\Delta u + u = f$ in $\Omega$, $\partial_n u = \tau$ on $\partial\Omega$, where $\Omega$ is a bounded smooth domain in $\mathbb R^N$, is presented.
	We establish $O(h^2|\log h|)$ and $O(h)$ error bounds in the $L^\infty$- and $W^{1,\infty}$-norms respectively, by adopting the technique of regularized Green's functions combined with local $H^1$- and $L^2$-estimates in dyadic annuli.
	Since the computational domain $\Omega_h$ is only polyhedral, one has to take into account non-conformity of the approximation caused by the discrepancy $\Omega_h \neq \Omega$.
	In particular, the so-called Galerkin orthogonality relation, utilized three times in the proof, does not exactly hold and involves domain perturbation terms (or boundary-skin terms), which need to be addressed carefully.
	A numerical example is provided to confirm the theoretical result.
\end{abstract}
\maketitle

\section{Introduction} 
We consider the following Poisson equation with a non-homogeneous Neumann boundary condition:
\begin{equation} \label{eq: Poisson}
	-\Delta u + u = f \quad\text{in}\quad\Omega, \qquad \partial_n u = \tau \quad\text{on}\quad \Gamma := \partial\Omega,
\end{equation}
where $\Omega \subset \mathbb R^N$ is a bounded domain with a smooth boundary $\Gamma$ of $C^\infty$-class, $f$ is an external force, $\tau$ is a prescribed Neumann data, and $\partial_n$ means the directional derivative with respect to the unit outward normal vector $n$ to $\Gamma$.
The linear (or $P_1$) finite element approximation to \eref{eq: Poisson} is quite standard.
Given an approximate polyhedral domain $\Omega_h$ whose vertices lie on $\Gamma$, one can construct a triangulation $\mathcal T_h$ of $\Omega_h$, build a finite dimensional space $V_h$ consisting of piecewise linear functions, and seek for $u_h \in V_h$ such that
\begin{equation} \label{eq: FEM}
	(\nabla u_h, \nabla v_h)_{\Omega_h} + (u_h, v_h)_{\Omega_h} = (\tilde f, v_h)_{\Omega_h} + (\tilde\tau, v_h)_{\Gamma_h} \qquad \forall v_h \in V_h,
\end{equation}
where $\Gamma_h := \partial\Omega_h$, and $\tilde f$ and $\tilde\tau$ denote extensions of $f$ and $\tau$, respectively.
Then, the main result of this paper is the following pointwise error estimates in the $L^\infty$- and $W^{1,\infty}$-norms:
\begin{equation} \label{eq: main result}
	\begin{aligned}
		\|\tilde u - u_h\|_{L^\infty(\Omega_h)} &\le Ch^2|\log h| \, \|u\|_{W^{2, \infty}(\Omega)}, \\
		\|\tilde u - u_h\|_{W^{1,\infty}(\Omega_h)} &\le Ch \, \|u\|_{W^{2, \infty}(\Omega)},
	\end{aligned}
\end{equation}
where $h$ denotes the mesh size of $\mathcal T_h$, and $\tilde u$ is an arbitrary extension of $u$ (of course, the way of extension must enjoy some stability, cf.\ \sref{sec2.3} below).

Regarding pointwise error estimates of the finite element method, there have been many contributions since 1970s (for example, see the references in \cite{Sch98}), and, consequently, standard methods to derive them are now available.
The strategy of those methods is briefly explained as follows.
By duality, analysis of $L^\infty$- or $W^{1,\infty}$-error of $u - u_h$ may be reduced to that of $W^{1,1}$-error between a regularized Green's function $g$, with singularity near $x_0 \in \Omega$, and its finite element approximation $g_h$.
To deal with $\|\nabla (g - g_h)\|_{L^1(\Omega)}$ in terms of energy norms, it is estimated either by $\sum_{j=0}^J d_j^{N/2}\|\nabla (g - g_h)\|_{L^2(\Omega \cap A_j)}$ or by $\|\sigma^{N/2} \nabla (g - g_h)\|_{L^2(\Omega)}$,
where $\{d_j\}_{j=0}^J$ are radii of dyadic annuli $A_j$ shrinking to $x_0$ with the minimum $d_J = Kh$, whereas $\sigma(x) := (|x - x_0|^2 + \kappa h^2)^{1/2}$.
The two strategies may be regarded as using discrete and continuous weights, respectively, and basically lead to the same results.
In this paper, we employ the first approach, in which scaling heuristics seem to work easier (in the second approach one actually needs to introduce an artificial parameter $\lambda\in(0,1)$ to avoid singular integration, which makes the weighted norm slightly complicated, cf.\ Remark 8.4.4 of \cite{BrSc07}).

The main difficulty of our problem lies in the non-conformity $V_h \not\subset H^1(\Omega)$ arising from the discrepancy $\Omega_h \neq \Omega$ and $\Gamma_h\neq\Gamma$, which we refer to as \emph{domain perturbation}.
In fact, the so-called Galerkin orthogonality relation (or consistency) does not exactly hold, and hence the standard methodology of error estimate cannot be directly applied. 
This issue was already considered in classical literature (see \cite[Section 4.4]{StFi73} or \cite[Section 4.4]{Cia78}) as long as energy-norm (i.e.\ $H^1$) error estimates for a Dirichlet problem are concerned.
However, there are much fewer studies of error analysis in other norms or for other boundary value problems, which take into account domain perturbation.
For example, \cite{BaEl88, Cer83} gave optimal $L^2$-error estimates for a Robin boundary value problem.

As for pointwise error estimates, the issue of domain perturbation was mainly treated only for a homogeneous Dirichlet problem in a convex domain.
In this case, one has a conforming approximation $V_h \cap H^1_0(\Omega_h) \subset H^1_0(\Omega)$ with the aid of the zero extension, which makes error analysis simpler.
This situation was studied for elliptic problems in \cite{BTW02, ScWa82} and for parabolic ones in \cite{Gei06, ThWa00}.
Although an idea to treat $\Omega_h \not\subset \Omega$ in the case of $L^\infty$-analysis is found in \cite[p.\ 2]{ScWa82}, it does not seem to be directly applicable to $W^{1,\infty}$-analysis or to Neumann problems.
In \cite{Gei06, Sch98, STW98}, they considered Neumann problems in a smooth domain assuming that triangulations exactly fit a curved boundary, where one need not take into account domain perturbation.
This assumption, however, excludes the use of usual Lagrange finite elements.
The $P_2$-isoparametric finite element analysis for a Dirichlet problem $(N=2)$ was shown in \cite{Wah78}, where the rate of convergence $O(h^{3-\epsilon})$ in the $L^\infty$-norm was obtained.

The aim of this paper is to present pointwise error analysis of the finite element method taking into account full non-conformity caused by domain perturbation.
To focus on its effect, we choose the simplest PDE \eref{eq: Poisson} and the simplest finite element, i.e., the $P_1$-element, as a model case.
However, since a non-homogeneous Neumann boundary condition is considered in a non-convex domain, no simplification mentioned above is available.
Our conclusion is already stated in \eref{eq: main result}, which implies that domain perturbation does not affect the rate of convergence in the $L^\infty$- and $W^{1,\infty}$-norms known for the case $\Omega_h = \Omega$, when $P_1$-elements are used to approximate both a curved domain and a solution.
This leads us to conjecture that the use of general isoparametric elements would keep the (quasi-)optimal rate of convergence in non-energy norms as well.

Finally, let us make a comment concerning the opinion that the issue of $\Omega_h \neq \Omega$ is similar to that of numerical integration (see \cite[p.\ 1356]{STW98}).
As mentioned in the same paragraph there, if a computational domain is extended (or transformed) to include $\Omega$ and a restriction (or transformation) operator to $\Omega$ is applied, then one can disregard the effect of domain perturbation (higher-order schemes based on such a strategy are proposed e.g.\ in \cite{CoSo12}).
On the other hand, since implementing such a restriction operator precisely for general domains is non-trivial in practical computation, some approximation of geometric information for $\Omega$ should be incorporated in the end.
Thereby one needs to more or less deal with domain perturbation in error analysis, and, in our opinion, its rigorous treatment would be quite different from that of numerical integration.

The organization of this paper is as follows.
Basic notations are introduced in \sref{sec2}, together with boundary-skin estimates and a concept of dyadic decomposition.
In \sref{sec3}, we present the main result (\tref{main theorem}) and reduce its proof to $W^{1,1}$-error estimate of $g - g_h$.
The weighted $H^1$- and $L^2$-error estimates of $g - g_h$ are shown in Sections \ref{sec4} and \ref{sec5}, respectively, which are then combined to complete the proof of \tref{main theorem} in \sref{sec6}.
A numerical example is given to confirm the theoretical result in \sref{sec7}.
Throughout this paper, $C>0$ will denote generic constants which may be different at each occurrence; its dependency (or independency) on other quantities will often be mentioned as well.
However, when it appears with sub- or super-scripts (e.g., $C_{0E}, C'$), we do not treat it as generic.

\section{Preliminaries} \label{sec2}
\subsection{Basic notation} \label{sec2.1}
Recall that $\Omega \subset \mathbb R^N$ is a bounded $C^\infty$-domain.
We employ the standard notation of the Lebesgue spaces $L^p(\Omega)$, Sobolev spaces $W^{s,p}(\Omega)$ (in particular, $H^s(\Omega) := W^{s, 2}(\Omega)$), and H\"older spaces $C^{m,\alpha}(\overline\Omega)$.
Throughout this paper we assume the regularity $u \in W^{2,\infty}(\Omega)$ for \eref{eq: Poisson}, which is indeed true if $f \in C^\alpha(\overline\Omega)$ and $\tau \in C^\alpha(\overline\Gamma)$ for some $\alpha \in (0,1)$.

Given a bounded domain $D \subset \mathbb R^N$, both of the $N$-dimensional Lebesgue measure of $D$ and the $(N-1)$-dimensional surface measure of $\partial D$ are simply denoted by $|D|$ and $|\partial D|$, as far as there is no fear of confusion.
Furthermore, we let $(\cdot, \cdot)_D$ and $(\cdot, \cdot)_{\partial D}$ be the $L^2(D)$- and $L^2(\partial D)$-inner products, respectively, and define the bilinear form
\begin{equation*}
	a_D(u, v) := (\nabla u, \nabla v)_D + (u, v)_D, \qquad u,v \in H^1(D),
\end{equation*}
which is simply written as $a(u, v)$ when $D = \Omega$, and as $a_h(u, v)$ when $D = \Omega_h$ (to be defined below).

Letting $\Omega_h$ be a polyhedral domain, we consider a family of triangulations $\{\mathcal T_h\}_{h\downarrow0}$ of $\Omega_h$ which consist of closed and mutually disjoint simplices.
We assume that $\{\mathcal T_h\}_{h\downarrow0}$ is quasi-uniform, that is, every $T \in \mathcal T_h$ contains (resp.\ is contained in) a ball with the radius $ch$ (resp.\ $h$), where $h := \max_{T\in\mathcal T_h} h_T$ with $h_T := \operatorname{diam}T$.
The boundary mesh on $\Gamma_h := \partial\Omega_h$ inherited from $\mathcal T_h$ is denoted by $\mathcal S_h$, namely, $\mathcal S_h = \{S \subset \Gamma_h \,|\, \text{$S$ is an $(N-1)$-dimensional face of some $T \in \mathcal T_h$} \}$.
We then assume that the vertices of every $S \in \mathcal S_h$ belong to $\Gamma$, that is, $\Gamma_h$ is essentially a linear interpolation of $\Gamma$.

The linear (or $P_1$) finite element space $V_h$ is given in a standard manner, i.e.,
\begin{equation*}
	V_h = \{v_h \in C(\overline\Omega_h) \,:\, v_h|_T \in P_1(T) \quad \forall T\in \mathcal T_h \},
\end{equation*}
where $P_k(T)$ stands for the polynomial functions defined in $T$ with degree $\le k$.

Let us recall a well-known result of an interpolation operator (also known as a local regularization operator) $\mathcal I_h : H^1(\Omega_h) \to V_h$ satisfying the following property (see \cite[Section 4.8]{BrSc07}):
\begin{equation*}
	\|\nabla^k(v - \mathcal I_h v)\|_{L^p(T)} \le C_{\mathcal I} h_T^{m-k} \|\nabla^m v\|_{L^p(M_T)} \qquad k=0,1, \, m=1,2, \, v\in W^{m,p}(\Omega_h),
\end{equation*}
where $M_T := \bigcup\{T' \in \mathcal T_h \,:\, T'\cap T \neq \emptyset \}$ is a macro-element of $T \in \mathcal T_h$.
The constant $C_{\mathcal I}$ depends on $c,k,m,p$ and on a reference element; especially it is independent of $v$ and $h_T$.
We also use the trace estimate
\begin{equation*}
	\|v\|_{L^2(\Gamma_h)} \le C \|v\|_{L^2(\Omega_h)}^{1/2} \|v\|_{H^1(\Omega_h)}^{1/2},
\end{equation*}
where $C$ depends on the $C^{0,1}$-regularity of $\Omega_h$ and thus it is uniformly bounded by that of $\Omega$ for $h\le 1$.

\subsection{Boundary-skin estimates} \label{sec2.2}
To examine the effects due to the domain discrepancy $\Omega_h \neq \Omega$, we introduce a notion of tubular neighborhoods $\Gamma(\delta) := \{x\in\mathbb R^N \,:\, \operatorname{dist}(x, \Gamma) \le \delta\}$.
It is known that (see \cite[Section 14.6]{GiTr98}) there exists $\delta_0>0$, which depends on the $C^{1,1}$-regularity of $\Omega$, such that each $x \in \Gamma(\delta_0)$ admits a unique representation
\begin{equation*}
	x = \bar x + tn(\bar x), \qquad \bar x\in\Gamma, \, t\in[-\delta_0, \delta_0].
\end{equation*}
We denote the maps $\Gamma(\delta_0)\to \Gamma$; $x\mapsto\bar x$ and $\Gamma(\delta_0)\to \mathbb R$; $x\mapsto t$ by $\pi(x)$ and $d(x)$, respectively
(actually, $\pi$ is an orthogonal projection to $\Gamma$ and $d$ agrees with the signed-distance function).
The regularity of $\Omega$ is inherited to that of $\pi$, $d$, and $n$ (cf.\ \cite[Section 7.8]{DeZo11}).

In \cite[Section 8]{KOZ16} we proved that $\pi|_{\Gamma_h}$ gives a homeomorphism (and piecewisely a diffeomorphism) between $\Gamma$ and $\Gamma_h$ provided $h$ is sufficiently small,
taking advantage of the fact that $\Gamma_h$ can be regarded as a linear interpolation of $\Gamma$ (recall the assumption on $\mathcal S_h$ mentioned above).
If we write its inverse map $\pi^* : \Gamma\to\Gamma_h$ as $\pi^*(x) = \bar x + t^*(\bar x) n(\bar x)$, then $t^*$ satisfies the estimates $\|\nabla_\Gamma^k t^*\|_{L^\infty(\Gamma)} \le C_{kE}h^{2-k}$ for $k=0,1,2$, where $\nabla_\Gamma$ means the surface gradient along $\Gamma$ and where the constant depends on the $C^{1,1}$-regularity of $\Omega$.
This in particular implies that $\Omega_h\triangle\Omega := (\Omega_h\setminus\Omega) \cup (\Omega\setminus\Omega_h)$ and $\Gamma_h \cup \Gamma$ are contained in $\Gamma(\delta)$ with $\delta := C_{0E}h^2$.
We refer to $\Omega_h\triangle\Omega$, $\Gamma(\delta)$ and their subsets as \emph{boundary-skin layers} or more simply as \emph{boundary skins}.

Furthermore, we know from \cite[Section 8]{KOZ16} the following boundary-skin estimates:
\begin{equation} \label{eq: boundary-skin estimates}
	\begin{aligned}
		\left| \int_\Gamma f\,d\gamma - \int_{\Gamma_h} f\circ\pi\,d\gamma_h \right| &\le C\delta \|f\|_{L^1(\Gamma)}, \\
		\|f\|_{L^p(\Gamma(\delta))} &\le C(\delta^{1/p} \|f\|_{L^p(\Gamma)} + \delta \|\nabla f\|_{L^p(\Gamma(\delta))}), \\
		\|f - f\circ\pi\|_{L^p(\Gamma_h)} &\le C\delta^{1-1/p} \|\nabla f\|_{L^p(\Gamma(\delta))},
	\end{aligned}
\end{equation}
where one can replace $\|f\|_{L^1(\Gamma)}$ in \eref{eq: boundary-skin estimates}$_1$ by $\|f\|_{L^1(\Gamma_h)}$.
As a version of \eref{eq: boundary-skin estimates}$_2$, we also need
\begin{equation} \label{eq: boundary-skin estimate 2}
	\|f\|_{L^p(\Omega_h\setminus\Omega)} \le C(\delta^{1/p} \|f\|_{L^p(\Gamma_h)} + \delta \|\nabla f\|_{L^p(\Omega_h\setminus\Omega)}),
\end{equation}
whose proof will be given in \lref{lem: proof of (2.2)}.
Finally, denoting by $n_h$ the outward unit normal to $\Gamma_h$, we notice that its error compared with $n$ is estimated as $\|n\circ\pi - n_h\|_{L^\infty(\Gamma_h)} \le Ch$ (see \cite[Section 9]{KOZ16}).

\subsection{Extension operators} \label{sec2.3}
We let $\tilde\Omega := \Omega\cup\Gamma(\delta) = \Omega_h\cup\Gamma(\delta)$ with $\delta = C_{0E}h^2$ given above.
For $u \in W^{2,\infty}(\Omega), f \in L^\infty(\Omega)$, and $\tau \in L^\infty(\Gamma)$, we assume that there exist extensions $\tilde u \in W^{2,\infty}(\tilde\Omega), \tilde f \in L^\infty(\tilde\Omega)$, and $\tilde\tau \in L^\infty(\tilde\Omega)$, respectively, which are stable in the sense that the norms of the extended quantities can be controlled by those of the original ones, e.g., $\|\tilde u\|_{W^{2,\infty}(\tilde\Omega)} \le C\|u\|_{W^{2,\infty}(\Omega)}$.
We emphasize that \eref{eq: Poisson} would not hold any longer in the extended region $\tilde\Omega \setminus \overline\Omega$.

We also need extensions whose behavior in $\Gamma(\delta)\setminus\Omega$ can be completely described by that in $\Gamma(c\delta) \cap \Omega$ for some constant $c>0$.
To this end we introduce an extension operator $P : W^{k,p}(\Omega) \to W^{k,p}(\tilde\Omega) \, (k=0,1,2, p \in [1,\infty])$ as follows.
For $x \in \Omega\setminus\Gamma(\delta)$ we let $Pf(x) = f(x)$; for $x = \bar x + tn(\bar x) \in \Gamma(\delta)$ we define
\begin{equation*}
	Pf(\bar x + tn(\bar x)) = \begin{cases} f(\bar x + t n(\bar x)) & (-\delta_0\le t<0), \\ 3f(\bar x - tn(\bar x)) - 2f(\bar x - 2t n(\bar x)) & (0\le t\le\delta_0), \end{cases} \qquad \bar x \in \Gamma.
\end{equation*}
\begin{prop} \label{prop: stability of extension}
	The extension operator $P$ satisfies the following stability condition:
	\begin{align*}
		\|Pf\|_{W^{k, p}(\Gamma(\delta))} &\le C\|f\|_{W^{k, p}(\Omega \cap \Gamma(2\delta))} \quad (k=0,1,2), \quad p\in[1,\infty],
	\end{align*}
	where $C$ is independent of $\delta$ and $f$.
\end{prop}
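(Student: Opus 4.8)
The plan is to reduce the estimate to a change of variables near $\Gamma$, exploiting that the geometric maps are as smooth as the boundary. First I would rewrite $P$ intrinsically: on $\Gamma(\delta_0)$ set $R_1(x) := \pi(x) - d(x)n(\pi(x))$ and $R_2(x) := \pi(x) - 2d(x)n(\pi(x))$; since $x = \pi(x) + d(x)n(\pi(x))$, these are precisely the ``normal reflections'' $(\bar x, t)\mapsto(\bar x, -t)$ and $(\bar x, t)\mapsto(\bar x, -2t)$ in the tubular coordinates of \sref{sec2.2}, and the definition of $P$ reads $Pf = f$ on $\Omega$ and $Pf(x) = 3f(R_1(x)) - 2f(R_2(x))$ on $\Gamma(\delta)\setminus\overline\Omega$. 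As $\pi, d, n$ inherit the $C^\infty$-regularity of $\Gamma$ on the fixed layer $\Gamma(\delta_0)$, each $R_j$ is a $C^\infty$-diffeomorphism onto its image with all derivatives bounded uniformly on $\Gamma(\delta_0/2)$; moreover $R_j|_\Gamma = \mathrm{id}$, and a one-line computation shows that $DR_j$ at a point of $\Gamma$ fixes the tangent plane and multiplies the normal direction by $-1$ (resp.\ $-2$), so $|\det DR_j| \ge 1/2$ on some $\Gamma(\delta_0')$ with $\delta_0'\le\delta_0/2$. Taking $h$ small we may assume $\delta = C_{0E}h^2 \le \delta_0'$, and then $R_1(\Gamma(\delta)\setminus\overline\Omega)$ and $R_2(\Gamma(\delta)\setminus\overline\Omega)$ both lie in $\Omega\cap\Gamma(2\delta)$.

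Next I would split $\Gamma(\delta) = (\Gamma(\delta)\cap\Omega) \cup (\Gamma(\delta)\setminus\overline\Omega)$, the two pieces meeting along $\Gamma$. On $\Gamma(\delta)\cap\Omega$ the bound is trivial since $Pf = f$ there. On $\Gamma(\delta)\setminus\overline\Omega$ I would invoke the standard chain-rule estimate for the composition of a $W^{k,p}$-function with a $C^k$-diffeomorphism of bounded distortion: for $k\le2$, $p\in[1,\infty]$,
\[
  \|f\circ R_j\|_{W^{k,p}(\Gamma(\delta)\setminus\overline\Omega)} \le C\|f\|_{W^{k,p}(R_j(\Gamma(\delta)\setminus\overline\Omega))} \le C\|f\|_{W^{k,p}(\Omega\cap\Gamma(2\delta))},
\]
where $C$ depends only on the $C^{k-1}$-bounds of $DR_j$ and (for $p<\infty$) on the lower bound $|\det DR_j|\ge1/2$, hence only on the $C^\infty$-geometry of $\Omega$ and, crucially, not on $\delta$ or $f$. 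Summing the two contributions (in $\ell^p$, or taking the maximum when $p=\infty$) would give the asserted estimate, once we know that $Pf$ belongs to $W^{k,p}(\Gamma(\delta))$ as a whole.

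That last point is where the coefficients $3$ and $-2$ play their role. In the tubular coordinates, writing $\hat f(\bar x, t) = f(\bar x + tn(\bar x))$ one has $\widehat{Pf}(\bar x, t) = \hat f(\bar x, t)$ for $t\le0$ and $\widehat{Pf}(\bar x, t) = 3\hat f(\bar x, -t) - 2\hat f(\bar x, -2t)$ for $t>0$; since $3 - 2 = 1$ and $-3 + 4 = 1$, the traces of $\widehat{Pf}$, of $\partial_t\widehat{Pf}$ and of all tangential derivatives coincide from the two sides of $\{t=0\}$ (only the second normal derivative jumps, which is harmless for $W^{2,p}$). Thus $Pf$ and its first-order derivatives have matching traces across $\Gamma$, and since $Pf$ is separately $W^{k,p}$ on the two half-layers by the previous step, the usual gluing lemma yields $Pf\in W^{k,p}(\Gamma(\delta))$ with norm equal to the $\ell^p$-sum of the two pieces. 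The only mildly delicate points are the $\delta$-independence of the composition constant $C$ — which comes down to all the relevant quantities being suprema over the fixed layer $\Gamma(\delta_0')$ — and the trace bookkeeping needed to glue across $\Gamma$; I do not expect either to be a genuine obstacle.
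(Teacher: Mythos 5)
Your proposal is correct and follows essentially the same route as the paper's own proof (Theorem A.1): the extension is controlled by a change of variables along the normal reflections $t\mapsto -t$, $t\mapsto -2t$ in the tubular coordinates, with constants uniform in $\delta$ because all geometric quantities are bounded on the fixed layer $\Gamma(\delta_0)$, and the coefficients $3,-2$ make the function and all first derivatives match across $\Gamma$, which is exactly how the paper justifies $Pf\in W^{2,p}$. The only difference is presentational: the paper carries out the computation piecewise over the boundary faces $S\in\mathcal S_h$ in local graph coordinates with the explicit Jacobian $J$ of $\Psi$, whereas you phrase it globally through the maps $R_1,R_2$ and a standard composition estimate, which is an equivalent bookkeeping of the same argument.
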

The proof of this proposition will be given in \tref{thm: extension operator}.

\subsection{Dyadic decomposition}
We introduce a dyadic decomposition of a domain according to \cite{Sch98}.
Let $B(x_0; r) = \{x \in \mathbb R^N \,:\, |x - x_0| \le r\}$ and $A(x_0; r, R) = \{x \in \mathbb R^N \,:\, r\le |x - x_0|\le R\}$ denote a closed ball and annulus in $\mathbb R^N$ respectively.
\begin{defi}
	For $x_0 \in \mathbb R^N, d_0>0, J \in \mathbb N_{\ge0}$, the family of sets $\mathcal A(x_0, d_0, J) = \{A_j\}_{j=0}^J$ defined by
	\begin{equation*}
		A_0 = B(x_0; d_0), \quad A_j = A(x_0; d_{j-1}, d_j), \quad d_j = 2^j d_0 \quad (j=1,\dots,J)
	\end{equation*}
	is called the \emph{dyadic $J$ annuli with the center $x_0$ and the initial stride $d_0$}.
\end{defi}
With a center and an initial stride specified, one can assign to a given domain a unique decomposition by dyadic annuli as follows.
\begin{lem} \label{lem: dyadic covering}
	For a bounded domain $\Omega \subset \mathbb R^N$, let $x_0 \in \Omega$, $0<d_0<\operatorname{diam}\Omega$, and $J$ be the smallest integer that is greater than $J' := \frac{\log( \operatorname{diam}\Omega/d_0 )}{\log2}$.
	Then we have $\overline\Omega \subset \bigcup\mathcal A(x_0, d_0, J)$.
\end{lem}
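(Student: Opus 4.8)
The plan is to reduce the assertion to the elementary fact that the union of the dyadic annuli is a single closed ball, together with a diameter bound. First I would observe that, directly from the definition, $A_0 = B(x_0; d_0)$ and $A_j = A(x_0; d_{j-1}, d_j)$ for $j = 1, \dots, J$; since closed balls and closed annuli glue along the spheres $\{|x - x_0| = d_{j-1}\}$, a short induction on $j$ shows $\bigcup_{i=0}^{j} A_i = B(x_0; d_j)$ for all $j$, and in particular $\bigcup \mathcal A(x_0, d_0, J) = B(x_0; d_J)$ with $d_J = 2^J d_0$.

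Next I would bound $d_J$ from below. The hypothesis $0 < d_0 < \operatorname{diam}\Omega$ ensures that $J' = \frac{\log(\operatorname{diam}\Omega/d_0)}{\log 2}$ is a well-defined positive real number, so the smallest integer $J$ exceeding $J'$ satisfies $J \ge 1$ (in particular $J \in \mathbb N_{\ge 0}$, so the family $\mathcal A(x_0, d_0, J)$ is legitimately defined) and $J > J'$. Applying the increasing map $t \mapsto 2^t$ gives $2^J > 2^{J'} = \operatorname{diam}\Omega / d_0$, hence $d_J = 2^J d_0 > \operatorname{diam}\Omega$.

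Finally, for an arbitrary $y \in \overline\Omega$, since $x_0 \in \Omega \subset \overline\Omega$ we have $|y - x_0| \le \operatorname{diam}\overline\Omega = \operatorname{diam}\Omega < d_J$, so that $y \in B(x_0; d_J) = \bigcup \mathcal A(x_0, d_0, J)$; as $y$ was arbitrary, this yields $\overline\Omega \subset \bigcup \mathcal A(x_0, d_0, J)$, which is the claim.

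There is no genuine obstacle here; the only points deserving a moment's care are checking that $J \ge 0$ so that $\mathcal A(x_0, d_0, J)$ makes sense, and using that ``smallest integer greater than $J'$'' delivers the strict inequality $J > J'$ (although even $J \ge J'$ would suffice, since $|y - x_0|$ is bounded by, not strictly less than, $\operatorname{diam}\Omega$).
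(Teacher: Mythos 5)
Your proof is correct and follows essentially the same route as the paper: use the definition of $J$ to get $d_J = 2^J d_0 > 2^{J'} d_0 = \operatorname{diam}\Omega$, and then note that every point of $\overline\Omega$ lies within distance $\operatorname{diam}\Omega$ of $x_0$, hence in $B(x_0; d_J) = \bigcup\mathcal A(x_0, d_0, J)$. The extra details you supply (the induction showing the union of annuli is the ball, and the check that $J \ge 1$) are fine but are treated as immediate in the paper.
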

\begin{proof}
	Since $2^{J'}d_0 = \operatorname{diam}\Omega$ and $J'<J\le J'+1$, one has $\operatorname{diam}\Omega<d_J\le 2\operatorname{diam}\Omega$.
	For arbitrary $x\in\Omega$ we see that $|x - x_0| \le \operatorname{diam}\Omega<d_J$, which implies $\overline\Omega \subset B(x_0; d_J) = \bigcup \mathcal A(x_0, d_0, J)$.
\end{proof}
\begin{defi}
	We define the \emph{decomposition of $\Omega$ into dyadic annuli with the center $x_0$ and the initial stride $d_0$} by $\mathcal A_\Omega(x_0, d_0) = \{\Omega\cap A_j\}_{j=0}^J$, where $\{A_j\}_{j=0}^J = \mathcal A(x_0, d_0, J)$ are the dyadic annuli given in \lref{lem: dyadic covering}.
	We also use the terminology \emph{dyadic decomposition} for abbreviation.
\end{defi}

For $\mathcal A(x_0, d_0, J) = \{A_j\}_{j=0}^J$ and $s \in [0,1]$, we consider expanded annuli $\mathcal A^{(s)}(x_0, d_0, J) = \{A_j^{(s)}\}_{j=0}^J$, where
\begin{equation*}
	A_j^{(s)} := A(x_0; (1- \textstyle\frac{s}2)d_{j-1}, (1+s)d_j).
\end{equation*}
In particular, for $s=1$ one has $A_j^{(1)} = A_{j-1}\cup A_j\cup A_{j+1}$ where we set $A_{-1} := \emptyset$ and $A_{J+1} := A(x_0; d_J, d_{J+1})$ with $d_{J+1} := 2d_J$.

We collect some basic properties of weighted $L^p$-norms defined on a dyadic decomposition.
\begin{lem}
	For a dyadic decomposition $\mathcal A_\Omega(x_0, d_0) = \{ \Omega\cap A_j \}_{j=0}^J$ of $\Omega$ and $p\in[1,\infty]$, the following estimates hold:
	\begin{align}
		\|f\|_{L^1(\Omega)} &\le \alpha_N^{1/p'} \sum_{j=0}^J d_j^{N/p'} \|f\|_{L^p(\Omega\cap A_j)} \label{eq: L1 can be bdd by weighted Lp}, \\
		\sum_{j=0}^J d_j^{N/p'} \|f\|_{L^p(\Omega\cap A_j^{(1)})} &\le (2^{N/p'} + 1 + 2^{-N/p'}) \sum_{j=0}^J d_j^{N/p'} \|f\|_{L^p(\Omega\cap A_j)}. \label{eq: reduction of expanded annuli to usual ones}
	\end{align}
	Here, $\alpha_N = \frac{2\pi^{N/2}}{N\Gamma(N/2)}$ means the volume of the $N$-dimensional unit ball and $p' = p/(p-1)$.
\end{lem}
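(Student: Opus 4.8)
The plan is to prove the two inequalities separately, both reducing to elementary measure-theoretic and $\ell^p$--$\ell^1$ estimates once one observes that the annuli $A_j$ overlap only on spheres, which are Lebesgue-null, so that $L^p$-norms split additively over the dyadic pieces.

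For \eref{eq: L1 can be bdd by weighted Lp}, I would start from $\overline\Omega \subset \bigcup_{j=0}^J A_j$ (this is \lref{lem: dyadic covering} applied with the center $x_0$ and stride $d_0$) together with the fact that the sets $\Omega\cap A_j$ are pairwise disjoint up to a null set, so that $\|f\|_{L^1(\Omega)} = \sum_{j=0}^J \|f\|_{L^1(\Omega\cap A_j)}$. On each piece apply Hölder's inequality, $\|f\|_{L^1(\Omega\cap A_j)} \le |\Omega\cap A_j|^{1/p'} \|f\|_{L^p(\Omega\cap A_j)}$, and bound $|\Omega\cap A_j| \le |A_j| \le |B(x_0;d_j)| = \alpha_N d_j^N$, whence $|\Omega\cap A_j|^{1/p'} \le \alpha_N^{1/p'} d_j^{N/p'}$. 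Summing over $j$ gives the claim; the endpoint cases $p=1$ (where $1/p'=0$ and the bound is in fact an equality) and $p=\infty$ (where $1/p'=1$) are subsumed under the usual conventions $1/\infty=0$ and $t^0=1$.

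For \eref{eq: reduction of expanded annuli to usual ones}, I would use the identity $A_j^{(1)} = A_{j-1}\cup A_j\cup A_{j+1}$ (with $A_{-1}:=\emptyset$ and $A_{J+1}:=A(x_0;d_J,d_{J+1})$) and again the essential disjointness to get, for every $j$, the subadditivity bound $\|f\|_{L^p(\Omega\cap A_j^{(1)})} \le \|f\|_{L^p(\Omega\cap A_{j-1})} + \|f\|_{L^p(\Omega\cap A_j)} + \|f\|_{L^p(\Omega\cap A_{j+1})}$, which follows for $p<\infty$ from $(a^p+b^p+c^p)^{1/p}\le a+b+c$ and for $p=\infty$ from $\max \le \text{sum}$. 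Multiplying by $d_j^{N/p'}$, summing over $j$, and collecting terms, the quantity $\|f\|_{L^p(\Omega\cap A_k)}$ appears with total weight at most $d_{k-1}^{N/p'}+d_k^{N/p'}+d_{k+1}^{N/p'}$ (with strictly fewer terms when $k=0$ or $k=J$); since $d_{k\pm1}=2^{\pm1}d_k$, this weight equals $(2^{-N/p'}+1+2^{N/p'})\,d_k^{N/p'}$, which yields the stated constant. Here one also notes that $\Omega\cap A_{J+1}$ is contained in the sphere $\{|x-x_0|=d_J\}$, because $\overline\Omega\subset B(x_0;d_J)$ by \lref{lem: dyadic covering}, and hence contributes nothing.

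Both arguments are essentially bookkeeping, and I do not anticipate a genuine obstacle; the only points deserving a little care are the endpoint conventions for $p\in\{1,\infty\}$, the truncation of the index set near $j=0$ and $j=J$, and the (harmless) fact that all mutual overlaps of the $A_j$ are Lebesgue-null, which is what makes the $L^1$- and $L^p$-norms split additively over the decomposition.
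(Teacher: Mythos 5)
Your proof is correct and follows essentially the same route as the paper: H\"older's inequality on each piece with the bound $|\Omega\cap A_j|\le \alpha_N d_j^N$ for \eref{eq: L1 can be bdd by weighted Lp}, and subadditivity over $A_j^{(1)}\subset A_{j-1}\cup A_j\cup A_{j+1}$ followed by reindexing with $d_{k\pm1}=2^{\pm1}d_k$ for \eref{eq: reduction of expanded annuli to usual ones}. The only cosmetic difference is that the paper simply notes $\Omega\cap A_{J+1}=\emptyset$ (since $|x-x_0|\le\operatorname{diam}\Omega<d_J$ on $\Omega$), while you argue it is at worst a null subset of the sphere, which is equally sufficient.
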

\begin{proof}
	It follows from the H\"older inequality that
	\begin{align*}
		\|f\|_{L^1(\Omega)} = \sum_{j=0}^J \|f\|_{L^1(\Omega\cap A_j)} \le \sum_{j=0}^J |A_j|^{1/p'} \|f\|_{L^p(\Omega\cap A_j)},
	\end{align*}
	which combined with $|A_j| = (1 - 2^{-N})d_j^N \alpha_N$ yields \eref{eq: L1 can be bdd by weighted Lp}.
	The estimate \eref{eq: reduction of expanded annuli to usual ones} follows from the fact that
	\begin{equation*}
		\|f\|_{L^p(\Omega\cap A_j^{(1)})} \le \|f\|_{L^p(\Omega\cap A_{j-1})} + \|f\|_{L^p(\Omega\cap A_j)} + \|f\|_{L^p(\Omega\cap A_{j+1})},
	\end{equation*}
	together with $\Omega \cap A_{-1} = \Omega \cap A_{J+1} = \emptyset$.
\end{proof}

We need the following lemmas to take care of consistency between annuli in $\mathcal A_\Omega(x_0, d_0)$ and triangles in $\mathcal T_h$.
\begin{lem} \label{lem: consistency of annuli and triangulation}
	Let $\mathcal A_\Omega(x_0, d_0) = \{\Omega \cap A_j\}_{j=0}^J$ be a dyadic decomposition of $\Omega$ with $d_0 \in [16h, 1]$ and $s\in[0, 3/4]$.
	
	(i) If $T\in\mathcal T_h$ satisfies $T \cap A_j^{(s)} \neq \emptyset$ then $M_T \subset A_j^{(s+1/4)}$, where $M_T$ is the macro element of $T$.
	
	(ii) If $T\in\mathcal T_h$ satisfies $T \setminus A_j^{(s+1/4)} \neq \emptyset$ then $M_T \subset (A_j^{(s)})^c$, where the superscript ``$c$'' means the complement set in $\mathbb R^N$.
\end{lem}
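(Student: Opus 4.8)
The plan is to reduce both assertions to the triangle inequality, once we record the elementary geometric fact that every point of the macro-element $M_T$ lies within distance $2h$ of every point of $T$: if $z \in M_T$ then $z \in T'$ for some $T' \in \mathcal T_h$ with $T' \cap T \neq \emptyset$, and choosing $p \in T' \cap T$ and any $y \in T$ gives $|z - y| \le |z - p| + |p - y| \le h_{T'} + h_T \le 2h$. Because $d_0 \ge 16h$, this yields $2h \le \tfrac18 d_0 \le \tfrac18 d_{j-1}$ and $2h \le \tfrac14 d_0 \le \tfrac14 d_j$ for every $j \ge 1$ (for $j=0$ only the second is needed), and these are precisely the amounts by which $A_j^{(s+1/4)}$ is wider than $A_j^{(s)}$: the inner radius contracts from $(1-\tfrac s2)d_{j-1}$ to $(1-\tfrac s2)d_{j-1} - \tfrac18 d_{j-1}$ and the outer radius expands from $(1+s)d_j$ to $(1+s)d_j + \tfrac14 d_j$, with the convention that $A_0$, being a ball, has inner radius $0$. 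The hypothesis $s \le 3/4$ guarantees $s + \tfrac14 \le 1$, so every expanded annulus occurring is of the admissible form already introduced (with $A_{-1} = \emptyset$ and $d_{J+1} = 2d_J$).

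For part (i) I would fix $y \in T \cap A_j^{(s)}$, so that $(1-\tfrac s2)d_{j-1} \le |y - x_0| \le (1+s)d_j$ (the lower bound being vacuous when $j=0$), and then for an arbitrary $z \in M_T$ estimate $\bigl||z-x_0| - |y-x_0|\bigr| \le |z-y| \le 2h$. Combining this with the two buffer inequalities above gives $(1-\tfrac{s+1/4}{2})d_{j-1} \le |z - x_0| \le (1+s+\tfrac14)d_j$, i.e.\ $z \in A_j^{(s+1/4)}$; since $z$ was arbitrary, $M_T \subset A_j^{(s+1/4)}$.

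For part (ii) I would start from a point $y \in T$ with $y \notin A_j^{(s+1/4)}$, which forces either $|y-x_0| < (1-\tfrac{s+1/4}{2})d_{j-1}$ or $|y-x_0| > (1+s+\tfrac14)d_j$, and treat the two cases separately. In the first, any $z \in M_T$ satisfies $|z-x_0| \le |y-x_0| + 2h < (1-\tfrac{s+1/4}{2})d_{j-1} + 2h \le (1-\tfrac s2)d_{j-1}$, so $z$ lies strictly inside the inner hole of $A_j^{(s)}$; in the second, $|z-x_0| \ge |y-x_0| - 2h > (1+s+\tfrac14)d_j - 2h \ge (1+s)d_j$, so $z$ lies strictly outside its outer sphere. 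In both cases $M_T \cap A_j^{(s)} = \emptyset$, that is, $M_T \subset (A_j^{(s)})^c$.

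I do not expect a genuine obstacle here, only bookkeeping; the two points needing care will be, first, to use that each simplex has diameter at most $h$ (and not merely that it sits inside a ball of radius $h$), since it is exactly this that makes the reach of $M_T$ from a point of $T$ equal to $2h$ and hence the threshold $d_0 \ge 16h$, i.e.\ $2h \le \tfrac18 d_0$, just sufficient; and second, to keep the ``too close to $x_0$'' and ``too far from $x_0$'' alternatives apart in part (ii), together with handling $j=0$ as the degenerate annulus of zero inner radius.
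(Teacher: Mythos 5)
Your proof is correct and follows essentially the same route as the paper: the triangle inequality combined with the macro-element reach $|x-x'|\le 2h$ and the buffer $2h\le \tfrac18 d_{j-1}$ (resp.\ $\tfrac14 d_j$) supplied by $d_0\ge 16h$, with the $j=0$ ball handled as the degenerate case. The only difference is that you write out part (ii) in full (split into the ``too close'' and ``too far'' alternatives), whereas the paper dismisses it as ``similar''; your version of (ii) is a correct instance of that similarity.
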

\begin{proof}
	We only prove (i) since item (ii) can be shown similarly.
	Let $x \in M_T$ be arbitrary.  By assumption there exists $x' \in T \cap A_j^{(s)}$; in particular, $(1-s/2)d_{j-1} \le |x' - x_0| \le (1+s)d_j$.
	Also, by definition of $M_T$, $|x - x'| \le 2h$.
	Then we have $(7/8 - s/2)d_{j-1} \le |x - x_0| \le (5/4 + s)d_j$ as a result of triangle inequalities, which implies $x \in A_j^{(s+1/4)}$.
\end{proof}

\begin{cor} \label{cor: support of interpolant}
	Under the assumption of \lref{lem: consistency of annuli and triangulation}, let $v \in H^1(\overline\Omega_h)$ satisfy $\operatorname{supp}v \subset A_j^{(s)}$.
	Then we have $\operatorname{supp}\mathcal I_hv \subset A_j^{(s+1/4)}$.
\end{cor}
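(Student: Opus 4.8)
The plan is to deduce the support property of $\mathcal I_h v$ directly from the locality of the interpolation operator and part (i) of \lref{lem: consistency of annuli and triangulation}. Recall that $\mathcal I_h$ is a local regularization operator: by construction, the value of $\mathcal I_h v$ on a simplex $T \in \mathcal T_h$ depends only on $v|_{M_T}$, where $M_T$ is the macro-element of $T$. Hence, if $v$ vanishes on all of $M_T$, then $(\mathcal I_h v)|_T = 0$. So it suffices to show that whenever $T \cap A_j^{(s)} \neq \emptyset$ is \emph{not} the relevant case — more precisely, that $(\mathcal I_h v)|_T$ can be nonzero only when $T \subset A_j^{(s+1/4)}$, and then to assemble these simplices.

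First I would fix a simplex $T \in \mathcal T_h$ on which $\mathcal I_h v$ does not vanish identically. By the locality just recalled, $v$ cannot vanish on all of $M_T$, so there exists a point $y \in M_T$ with $y \in \operatorname{supp} v \subset A_j^{(s)}$. In particular $M_T \cap A_j^{(s)} \neq \emptyset$, and since every point of $M_T$ lies in some simplex $T' \in \mathcal T_h$ with $T' \cap T \neq \emptyset$, there is a simplex $T'$ (possibly $T$ itself) with $T' \cap A_j^{(s)} \neq \emptyset$ and $T \subset M_{T'}$. Applying \lref{lem: consistency of annuli and triangulation}(i) to $T'$ gives $M_{T'} \subset A_j^{(s+1/4)}$, hence $T \subset A_j^{(s+1/4)}$. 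Since $\operatorname{supp} \mathcal I_h v$ is the closure of the union of those $T$ on which $\mathcal I_h v$ is not identically zero, we conclude $\operatorname{supp} \mathcal I_h v \subset \bigcup\{T : T \subset A_j^{(s+1/4)}\} \subset A_j^{(s+1/4)}$, which is the claim. Note that the hypothesis $s \in [0, 3/4]$ ensures $s + 1/4 \le 1$, so that $A_j^{(s+1/4)}$ is among the expanded annuli covered by \lref{lem: consistency of annuli and triangulation}, and $d_0 \ge 16h$ guarantees the bound $|x - x'| \le 2h$ used there is small relative to the annulus width.

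The only delicate point — and the one I would state carefully rather than gloss over — is the precise sense in which $\mathcal I_h v$ is ``local'': one must invoke that the value of $\mathcal I_h v$ restricted to $T$ is determined by $v|_{M_T}$, a property which holds for the standard Scott–Zhang-type or Clément-type regularization referenced via \cite[Section 4.8]{BrSc07}, rather than merely the interpolation estimate quoted in \sref{sec2.1}. Once that locality is made explicit, the argument is a short chain of set inclusions with no real obstacle; the bookkeeping with macro-elements (passing from $M_T \cap A_j^{(s)} \neq \emptyset$ to a neighboring simplex $T'$ meeting $A_j^{(s)}$ with $T \subset M_{T'}$) is the one step that merits a sentence of care, since $T \subset M_T$ but $M_T$ need not be contained in $M_{T'}$ for a single $T'$ — however, $T$ itself \emph{does} meet $M_{T'}$ for any $T'$ sharing a point with $T$, and in particular $T \subset M_{T'}$ whenever $T \cap T' \neq \emptyset$, which is all that is needed.
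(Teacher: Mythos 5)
Your proposal is correct and takes essentially the same approach as the paper: both proofs rest on the locality of $\mathcal I_h$ (that $\mathcal I_h v|_T$ is determined by $v|_{M_T}$) combined with \lref{lem: consistency of annuli and triangulation}. The only difference is cosmetic: the paper argues contrapositively with part (ii) (a point $x\in\Omega_h\setminus A_j^{(s+1/4)}$ lies in some $T$ whose macro-element misses $A_j^{(s)}$, so $\mathcal I_h v|_T=0$), whereas you use part (i) applied to a neighboring simplex $T'$ meeting $A_j^{(s)}$ with $T\subset M_{T'}$, which is equivalent bookkeeping.
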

\begin{proof}
	It suffices to show $\mathcal I_hv(x) = 0$ for all $x \in \Omega_h\setminus A_j^{(s+1/4)}$.
	In fact, since there exists $T \in \mathcal T_h$ such that $x \in T$, one has $M_T \cap A_j^{(s)} = \emptyset$ as a result of \lref{lem: consistency of annuli and triangulation}(ii).
	Hence $v|_{M_T} = 0$, so that $\mathcal I_hv|_T = 0$.
\end{proof}

Finally, note that for any dyadic decomposition $\mathcal A_\Omega(x_0, d_0)$ we have
\begin{equation} \label{eq: geometric sequence}
	\sum_{j=0}^J d_j^{\beta} \le \begin{cases} Cd_J^\beta & (\beta>0), \\ C(1 + |\log d_0|) & (\beta = 0), \\ Cd_0^\beta & (\beta<0), \end{cases}
\end{equation}
where $C = C(N, \Omega, \beta)$ is independent of $x_0, d_0$, and $J$.
Moreover, since $d_j \le d_J \le 2\operatorname{diam}\Omega$, one has
\begin{equation*}
	d_j^\alpha + d_j^\beta \le Cd_j^{\min\{\alpha, \beta\}}, \qquad 0\le j\le J, \; \alpha,\beta\in\mathbb R, \; C=C(N, \Omega, \alpha, \beta),
\end{equation*}
which will not be emphasized in the subsequent arguments.

\section{Main theorem and its reduction to $W^{1,1}$-analysis} \label{sec3}
Let us state the main result of this paper.
\begin{thm} \label{main theorem}
	Let $u\in W^{2,\infty}(\Omega)$ and $u_h \in V_h$ be the solutions of \eref{eq: Poisson} and \eref{eq: FEM} respectively.
	Then there exists $h_0>0$ such that for all $h \in (0, h_0]$ and $v_h \in V_h$ we have
	\begin{align*}
		\|\tilde u - u_h\|_{L^\infty(\Omega_h)} &\le Ch|\log h| \, \|\tilde u - v_h\|_{W^{1,\infty}(\Omega_h)} + Ch^2|\log h| \, \|u\|_{W^{2,\infty}(\Omega)}, \\
		\|\tilde u - u_h\|_{W^{1,\infty}(\Omega_h)} &\le C \|\tilde u - v_h\|_{W^{1,\infty}(\Omega_h)} + Ch \, \|u\|_{W^{2,\infty}(\Omega)},
	\end{align*}
	where $C$ is independent of $h, u$, and $v_h$.
\end{thm}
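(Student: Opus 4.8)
The plan is to follow the classical regularized--Green's--function duality argument (as in \cite{Sch98,BrSc07}), the genuinely new point being that the domain--perturbation terms are kept under control by means of the boundary-skin estimates \eref{eq: boundary-skin estimates}, \eref{eq: boundary-skin estimate 2}. Fix $x_0\in\overline\Omega_h$, and for the $W^{1,\infty}$--estimate also a simplex $T\in\mathcal T_h$ with $x_0\in\operatorname{int}T$ and a unit vector $\ell$, so that $\partial_\ell$ acts on the affine piece $u_h|_T$. Let $\delta^h=\delta^h_{x_0}$ (resp.\ $\delta^h_{x_0,\ell}$) be a regularized point functional: a function supported on one simplex containing $x_0$, with $(\delta^h_{x_0},v_h)_{\Omega_h}=v_h(x_0)$ (resp.\ $(\delta^h_{x_0,\ell},v_h)_{\Omega_h}=\partial_\ell v_h(x_0)$) for all $v_h\in V_h$, and with $|w(x_0)-(\delta^h_{x_0},w)_{\Omega_h}|\le Ch^2\|w\|_{W^{2,\infty}}$ (resp.\ $|\partial_\ell w(x_0)-(\delta^h_{x_0,\ell},w)_{\Omega_h}|\le Ch\|w\|_{W^{2,\infty}}$) for $w\in W^{2,\infty}(\tilde\Omega)$; such functions are standard, the vanishing--moment conditions being automatic since $P_1\subset V_h$ is reproduced exactly. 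Define $g\in H^1(\Omega_h)$ by $a_h(\varphi,g)=(\delta^h,\varphi)_{\Omega_h}$ for all $\varphi\in H^1(\Omega_h)$ and its finite element approximation $g_h\in V_h$ by $a_h(\chi,g_h)=(\delta^h,\chi)_{\Omega_h}$ for all $\chi\in V_h$. Since both problems are posed on $\Omega_h$ with the same bilinear form $a_h$, the Galerkin orthogonality $a_h(\chi,g-g_h)=0$ $(\chi\in V_h)$ holds \emph{exactly}, and $v_h(x_0)=a_h(v_h,g_h)$ (resp.\ $\partial_\ell v_h(x_0)=a_h(v_h,g_h)$) for $v_h\in V_h$.

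Write $e:=\tilde u-u_h$ and let $\mathcal Le(x_0)$ denote $e(x_0)$ or $\partial_\ell e(x_0)$. Starting from $\mathcal Le(x_0)=\mathcal L\tilde u(x_0)-a_h(u_h,g_h)$ and inserting $g$ and an arbitrary $v_h\in V_h$ one obtains
\[
\mathcal Le(x_0)=\bigl[\mathcal L\tilde u(x_0)-(\delta^h,\tilde u)_{\Omega_h}\bigr]+\bigl[a_h(\tilde u,g_h)-(\tilde f,g_h)_{\Omega_h}-(\tilde\tau,g_h)_{\Gamma_h}\bigr]+a_h(\tilde u-v_h,g-g_h)=:R_{\mathrm{reg}}+R_{\mathrm{pert}}+R_{\mathrm{app}},
\]
using $a_h(u_h,g_h)=(\tilde f,g_h)_{\Omega_h}+(\tilde\tau,g_h)_{\Gamma_h}$ from \eref{eq: FEM}, the identity $(\delta^h,\tilde u)_{\Omega_h}=a_h(\tilde u,g)$, and $a_h(v_h,g-g_h)=0$. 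The regularization estimate and the stability of the extension give $|R_{\mathrm{reg}}|\le Ch^2\|u\|_{W^{2,\infty}(\Omega)}$ in the $L^\infty$--case and $\le Ch\|u\|_{W^{2,\infty}(\Omega)}$ in the $W^{1,\infty}$--case, while H\"older's inequality gives $|R_{\mathrm{app}}|\le\|\tilde u-v_h\|_{W^{1,\infty}(\Omega_h)}\,\|g-g_h\|_{W^{1,1}(\Omega_h)}$. Hence the theorem reduces to (i) the $W^{1,1}$--bound $\|g-g_h\|_{W^{1,1}(\Omega_h)}\le Ch|\log h|$ in the $L^\infty$--case, and $\le C$ for the (more singular) first--order Green's function in the $W^{1,\infty}$--case, and (ii) the domain--perturbation bound $|R_{\mathrm{pert}}|\le Ch^2|\log h|\,\|u\|_{W^{2,\infty}(\Omega)}$, resp.\ $\le Ch\,\|u\|_{W^{2,\infty}(\Omega)}$. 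Item (i) is the ``$W^{1,1}$--analysis'' announced in \sref{sec3}, obtained in Sections \ref{sec4}--\ref{sec6} from weighted $H^1$-- and $L^2$--estimates of $g-g_h$ on the dyadic decomposition; the same estimates supply the a priori bounds $\|g_h\|_{L^1(\Omega_h)}+\|g_h\|_{L^1(\Gamma_h)}\le C|\log h|$ and $\|\nabla g_h\|_{L^1(\Omega_h\setminus\Omega)}\le Ch^2|\log h|$ (resp.\ $\le Ch$) needed next.

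For (ii) I would compare $a_h(\tilde u,g_h)$ with the weak form of \eref{eq: Poisson}. Let $Eg_h\in H^1(\tilde\Omega)$ be a stable extension of $g_h$ with $Eg_h=g_h$ on $\Omega\cap\Omega_h$ and $Eg_h=g_h\circ\pi^*$ on $\Gamma$ (admissible by \pref{prop: stability of extension} since $|t^*|\le C_{0E}h^2$). Subtracting $a(u,Eg_h)-(f,Eg_h)_\Omega-(\tau,Eg_h)_\Gamma=0$ and using $\tilde u=u$, $\tilde f=f$ on $\Omega$ together with $Eg_h=g_h$ on $\Omega\cap\Omega_h$,
\[
R_{\mathrm{pert}}=\bigl[a_{\Omega_h\setminus\Omega}(\tilde u,g_h)-a_{\Omega\setminus\Omega_h}(\tilde u,Eg_h)\bigr]-\bigl[(\tilde f,g_h)_{\Omega_h\setminus\Omega}-(f,Eg_h)_{\Omega\setminus\Omega_h}\bigr]-\bigl[(\tilde\tau,g_h)_{\Gamma_h}-(\tau,Eg_h)_\Gamma\bigr].
\]
The first two brackets are integrals over the boundary skin $\Omega_h\triangle\Omega\subset\Gamma(\delta)$, $\delta=C_{0E}h^2$; since $\|\tilde u\|_{W^{2,\infty}(\tilde\Omega)}+\|f\|_{L^\infty(\Omega)}\le C\|u\|_{W^{2,\infty}(\Omega)}$, estimate \eref{eq: boundary-skin estimate 2} combined with the $g_h$--bounds above gives $O(h^2|\log h|)\,\|u\|_{W^{2,\infty}(\Omega)}$ for the value functional (and $O(h)\,\|u\|_{W^{2,\infty}(\Omega)}$ for the first--order one), the logarithm entering only through $\|\nabla g_h\|_{L^1(\Omega_h\setminus\Omega)}$ when $x_0$ itself lies within $O(h)$ of $\Gamma_h$. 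The surface bracket is rewritten over $\Gamma$ through $\pi^*$, whose surface Jacobian $J$ satisfies $|J-1|\le C\delta$: since $|(\tilde\tau\circ\pi^*)J-\tau|\le C(\|\nabla\tilde\tau\|_{L^\infty}|t^*|+\|\tilde\tau\|_{L^\infty}\delta)\le Ch^2\|u\|_{W^{2,\infty}(\Omega)}$ and $Eg_h|_\Gamma=g_h\circ\pi^*$, it is bounded by $Ch^2\|u\|_{W^{2,\infty}(\Omega)}\,\|g_h\|_{L^1(\Gamma_h)}=O(h^2|\log h|)\,\|u\|_{W^{2,\infty}(\Omega)}$. (Alternatively one integrates by parts elementwise, obtaining $R_{\mathrm{pert}}=(-\Delta\tilde u+\tilde u-\tilde f,g_h)_{\Omega_h\setminus\Omega}+(\partial_{n_h}\tilde u-\tilde\tau,g_h)_{\Gamma_h}$; the normal--derivative term then has to be treated with the \emph{averaged} estimate \eref{eq: boundary-skin estimates}$_1$ in order to overcome the merely first--order bound $\|n_h-n\circ\pi\|_{L^\infty(\Gamma_h)}\le Ch$, whereas the comparison route above absorbs $n_h$ into $\pi^*$ and avoids this.)

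Collecting the estimates, $|e(x_0)|\le C\|g-g_h\|_{W^{1,1}(\Omega_h)}\,\|\tilde u-v_h\|_{W^{1,\infty}(\Omega_h)}+Ch^2|\log h|\,\|u\|_{W^{2,\infty}(\Omega)}$ in the $L^\infty$--case, and $|\partial_\ell e(x_0)|\le C\|g-g_h\|_{W^{1,1}(\Omega_h)}\,\|\tilde u-v_h\|_{W^{1,\infty}(\Omega_h)}+Ch\,\|u\|_{W^{2,\infty}(\Omega)}$ in the $W^{1,\infty}$--case; inserting the bound (i) (namely $Ch|\log h|$, resp.\ $C$) and taking the suprema over $x_0$ (and over $T$, $\ell$) yields the two asserted inequalities. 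I expect the main obstacle to be twofold: first, keeping $R_{\mathrm{pert}}$ of order $h^2|\log h|$ (resp.\ $h$) even though the normal discrepancy is only $O(h)$ --- resolved, as indicated, by routing the domain--perturbation terms through the boundary-skin estimates of \sref{sec2.2} rather than through a pointwise normal bound; and second, more fundamentally, the weighted $H^1$-- and $L^2$--estimates of $g-g_h$ behind (i), whose proofs again encounter domain--perturbation remainders on dyadic annuli meeting $\Gamma_h$ (these being the remaining two occurrences of the ``inexact Galerkin orthogonality'' referred to in the abstract), and which are deferred to Sections \ref{sec4}--\ref{sec6}.
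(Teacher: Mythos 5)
Your reduction deviates from the paper's in one structural choice, and that choice is where the argument breaks. You define the continuous dual function $g$ by $a_h(\varphi,g)=(\delta^h,\varphi)_{\Omega_h}$ for all $\varphi\in H^1(\Omega_h)$, i.e.\ as the solution of a Neumann problem posed on the \emph{polyhedron} $\Omega_h$, precisely so that the Galerkin orthogonality $a_h(\chi,g-g_h)=0$ becomes exact. The paper instead defines $g=g_m$ on the smooth domain $\Omega$ (with $\partial_n g=0$ on $\Gamma$) and accepts that orthogonality holds only up to a residual $\operatorname{Res}_g$ (\lref{lem: asymptotic Galerkin orthogonality}), which produces the extra terms $I_2$ and the splitting of $\operatorname{Res}_u(g_h)$ into $I_3+I_4$ that your identity does not contain. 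The gap is your claim that the needed bound $\|g-g_h\|_{W^{1,1}(\Omega_h)}\le C(h|\log h|)^{1-m}$ ``is obtained in Sections \ref{sec4}--\ref{sec6}.'' It is not: those sections and Appendix B are proved for the $g$ defined on $\Omega$, and they rest on smooth-domain elliptic theory --- local $H^2$- and $W^{2,\infty}$-bounds and Krasovskii's pointwise estimates for the Neumann Green's function of $-\Delta+1$ on $\Omega$ --- transferred to $\tilde\Omega$ only through the explicit extension $P$. For your $g$, posed on $\Omega_h$, none of this is available with $h$-uniform constants: $\Omega_h$ is in general a non-convex Lipschitz polyhedron (the theorem allows, and the numerical example uses, non-convex $\Omega$), so $g\notin H^2(\Omega_h)$ near reentrant edges, the surviving fractional regularity has constants depending on the $h$-dependent domain, and the Green's-function decay estimates driving the dyadic-annuli argument (Lemmas B.1--B.4, especially for $m=1$) have no polyhedral analogue here. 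This is exactly the difficulty the paper's seemingly heavier bookkeeping (inexact orthogonality plus boundary-skin estimates) is designed to avoid; note that your closing remark that the weighted estimates ``again encounter domain-perturbation remainders'' describes the paper's setting, not yours --- in your setting they encounter a regularity problem on $\Omega_h$ instead. As written, the proposal therefore does not prove the theorem; keeping your cleaner orthogonality would require redoing all of Sections \ref{sec4}--\ref{sec6} and Appendix B on the polyhedron.

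Two secondary points. Your treatment of $R_{\mathrm{pert}}$, which equals the paper's $\operatorname{Res}_u(g_h)=I_4-(-I_3)$ evaluated at the discrete Green's function, is a legitimate alternative to the paper's route through $\tilde g$, and the surface bracket handled via $\pi^*$ with $|J-1|\le C\delta$ is essentially \eref{eq: boundary-skin estimates}$_1$. But the volume brackets need $\|\nabla g_h\|_{L^1(\Omega_h\setminus\Omega)}\le Ch^{2-m}|\log h|^{1-m}$, which you assert is ``supplied by the same estimates''; it is not among the paper's stated bounds and requires an extra localization step (for instance an elementwise inverse estimate $\|\nabla g_h\|_{L^1(T\cap(\Omega_h\setminus\Omega))}\le C(\delta/h)\,\|\nabla g_h\|_{L^1(T)}$ combined with localized bounds for $\|\nabla\tilde g\|_{L^1}$ and $\|\tilde g-g_h\|_{W^{1,1}}$ on the $O(h)$-strip of boundary elements). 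The paper sidesteps this by pairing $\tilde g-g_h$ only with quantities that are already $O(h)$ on $\Gamma_h$ (using $\partial_nu=\tau$ via the projection $\pi$) or with function values controlled by \eref{eq: boundary-skin estimate 2}, and by estimating the $\tilde g$-part through \cref{cor: boundary-skin estimates for g}. Your regularization term $R_{\mathrm{reg}}$ and the H\"older bound on $R_{\mathrm{app}}$ are consistent with the asserted inequalities, so those parts are fine.
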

\begin{rem} \label{rem1 in sec3}
	(i) By taking $v_h = \mathcal I_h \tilde u$, we immediately obtain \eref{eq: main result}.

	(ii) The factor $h \|\tilde u - v_h\|_{W^{1,\infty}(\Omega_h)}$ in the $L^\infty$-estimate could be replaced by $\|\tilde u - v_h\|_{L^\infty(\Omega_h)}$ (cf.\ \cite[p.\ 889]{Sch98}), which will be discussed elsewhere.
	
	(iii) The $O(h^2|\log h|)$ and $O(h)$ contributions in the $L^\infty$- and $W^{1,\infty}$-error estimates would not be improved even if the quadratic (or higher-order) finite element were employed.
	In fact, the domain perturbation term $I_4$ (see Lemmas \ref{lem: asymptotic Galerkin orthogonality} and \ref{lem: estimate of I4 in Sec3} below) gives rise to such contributions regardless of the choice of $V_h$, unless the approximation of $\Gamma$ becomes more accurate than $P_1$.
\end{rem}

Let us reduce pointwise error estimates to $W^{1,1}$-error analysis for regularized Green's functions, which is now a standard approach in this field.
For arbitrary $T \in \mathcal T_h$ and $x_0 \in T$ we let $\eta = \eta_{x_0} \in C_0^\infty(T)$, $\eta \ge 0$ be a regularized delta function such that
\begin{equation} \label{eq: properties of eta}
	\begin{aligned}
		&\int_T \eta(x) v_h(x) \, dx = v_h(x_0) \quad \forall v_h\in P_1(T), \qquad \|\nabla^k\eta\|_{L^\infty(T)} \le Ch^{-k} \, (k=0,1,2), \\
		&\operatorname{dist}(\operatorname{supp}\eta, \partial T) \ge Ch,
	\end{aligned}
\end{equation}
where $C$ is independent of $T, h$, and $x_0$ (see \cite{SSW96} for construction of $\eta$).
\begin{rem}
	(i) The quasi-uniformity of meshes are needed to ensure the last two properties of \eref{eq: properties of eta}. \\
	(ii) We have $\operatorname{supp}\eta \cap \Gamma(2\delta) = \emptyset$ with $\delta = C_{0E}h^2$, provided that $h$ is sufficiently small.
\end{rem}

We consider two kinds of regularized Green's functions $g_0, g_1 \in C^\infty(\overline\Omega)$ satisfying the following PDEs:
\begin{equation*}
	-\Delta g_0 + g_0 = \eta \quad\text{in}\quad\Omega, \qquad \partial_ng_0 = 0 \quad\text{on}\quad\Gamma,
\end{equation*}
and
\begin{equation*}
	-\Delta g_1 + g_1 = \partial\eta \quad\text{in}\quad\Omega, \qquad \partial_ng_1 = 0 \quad\text{on}\quad\Gamma,
\end{equation*}
where $\partial$ stands for an arbitrary directional derivative.
Accordingly, we let $g_{0h}, g_{1h} \in V_h$ be the solutions for finite element approximate problems as follows:
\begin{equation*}
	a_h(v_h, g_{0h}) = (v_h, \eta)_{\Omega_h} \qquad \forall v_h\in V_h, \qquad\text{and}\qquad a_h(v_h, g_{1h}) = (v_h, \partial\eta)_{\Omega_h} \qquad \forall v_h\in V_h.
\end{equation*}

The goal of this section is then to reduce \tref{main theorem} to the estimate
\begin{equation} \label{eq: W1,1 estimate for g-gh}
	\|\tilde g_m - g_{mh}\|_{W^{1,1}(\Omega_h)} \le C(h|\log h|)^{1-m}, \quad m=0,1,
\end{equation}
where $C$ is independent of $h, x_0$, and $\partial$, and $\tilde g_m := Pg_m$ means the extension defined in \sref{sec2.3}.
To observe this fact, we represent pointwise errors at $x_0$, with the help of $\eta$, as
\begin{align*}
	\tilde u(x_0) - u_h(x_0) &= (\tilde u - v_h)(x_0) + (v_h - \tilde u, \eta)_{\Omega_h} + (\tilde u - u_h, \eta)_{\Omega_h}, \\
	\partial(\tilde u - u_h)(x_0) &= \partial(\tilde u - v_h)(x_0) + (\partial(v_h - \tilde u), \eta)_{\Omega_h} - (\tilde u - u_h, \partial\eta)_{\Omega_h},
\end{align*}
for all $v_h \in V_h$.
Since the first two terms on the right-hand sides are bounded by $2\|\tilde u - v_h\|_{L^\infty(\Omega_h)}$ and $2\|\nabla(\tilde u - v_h)\|_{L^\infty(\Omega_h)}$, in order to prove \tref{main theorem} it suffices to show that
\begin{align*}
	|(\tilde u - u_h, \eta)_{\Omega_h}| &\le C h|\log h| \|\tilde u - v_h\|_{W^{1,\infty}(\Omega_h)} + Ch^2 |\log h| \|u\|_{W^{2,\infty}(\Omega)}, \\
	|(\tilde u - u_h, \partial\eta)_{\Omega_h}| &\le C\|\tilde u - v_h\|_{W^{1,\infty}(\Omega_h)} + Ch \|u\|_{W^{2,\infty}(\Omega)}.
\end{align*}
With this aim we prove:
\begin{prop} \label{prop: main result of Sec. 3.1}
	For $m=0,1$ and arbitrary $v_h \in V_h$, one obtains
	\begin{align*}
		|(\tilde u - u_h, \partial^m\eta)_{\Omega_h}| &\le C(\|\tilde u - v_h\|_{W^{1,\infty}(\Omega_h)} + Ch \|u\|_{W^{2,\infty}(\Omega)}) \|\tilde g_m - g_{mh}\|_{W^{1,1}(\Omega_h)} \\
			&\hspace{1cm} + Ch (h|\log h|)^{1-m} \|u\|_{W^{2,\infty}(\Omega)}.
	\end{align*}
\end{prop}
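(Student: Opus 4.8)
The plan is to bound the quantity $(\tilde u - u_h, \partial^m\eta)_{\Omega_h}$ by inserting the finite element Green's function and exploiting an \emph{approximate} Galerkin orthogonality relation. First I would use the defining property of $g_{mh}$, namely $a_h(v_h, g_{mh}) = (v_h, \partial^m\eta)_{\Omega_h}$ for all $v_h \in V_h$, applied with $v_h$ the finite element interpolant (or local regularization) $\mathcal{I}_h(\tilde u - v_h)$ of the error; more precisely I would write
\begin{equation*}
	(\tilde u - u_h, \partial^m\eta)_{\Omega_h} = (\tilde u - v_h, \partial^m\eta)_{\Omega_h} + (v_h - u_h, \partial^m\eta)_{\Omega_h},
\end{equation*}
handle the first term directly by $\|\tilde u - v_h\|_{L^\infty}$ times $\|\partial^m\eta\|_{L^1}$ (which is $O(1)$ for $m=0$ and $O(h^{-1})\cdot h^N \sim O(h^{N-1})$-scaled but really just $O(1)$ after the volume factor — in any case a lower-order contribution absorbed into the stated bound), and for the second term use $v_h - u_h \in V_h$ to rewrite $(v_h - u_h, \partial^m\eta)_{\Omega_h} = a_h(v_h - u_h, g_{mh})$.

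Next I would relate $a_h(v_h - u_h, g_{mh})$ to $a_h(\tilde u - u_h, \tilde g_m)$ — equivalently to $a(u - \cdot, g_m)$ type quantities on $\Omega$ — via the continuous and discrete weak formulations. Using \eqref{eq: FEM} for $u_h$ and \eqref{eq: Poisson} tested against $\tilde g_m - g_{mh}$ (suitably restricted/extended), the bulk terms $a(\tilde u, \tilde g_m)$ and $(\tilde f, \cdot) + (\tilde\tau, \cdot)$ would cancel up to \emph{domain perturbation terms}: integrals over $\Omega_h \triangle \Omega$ and discrepancies between $\int_\Gamma$ and $\int_{\Gamma_h}$, together with the fact that the PDE $-\Delta u + u = f$ fails on $\tilde\Omega \setminus \overline\Omega$. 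This is where the structure announced in Remark \ref{rem1 in sec3}(iii) enters: the leftover is a sum of boundary-skin terms $I_1, \dots, I_4$ (presumably introduced in the forthcoming \lref{lem: asymptotic Galerkin orthogonality}), each supported in $\Gamma(\delta)$ with $\delta = C_{0E}h^2$. I would estimate each $I_k$ using the boundary-skin estimates \eqref{eq: boundary-skin estimates}, \eqref{eq: boundary-skin estimate 2}, the stability of the extension operators (\pref{prop: stability of extension}), the normal-error bound $\|n\circ\pi - n_h\|_{L^\infty(\Gamma_h)} \le Ch$, and the trace estimate, pulling out a factor $\|u\|_{W^{2,\infty}(\Omega)}$ from $\tilde u$ and a factor $\|\tilde g_m - g_{mh}\|_{W^{1,1}}$ or $\|\nabla \tilde g_m\|_{L^1(\Gamma(\delta))}$-type quantity from $g_m$. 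Because each skin term carries at least one power of $\delta \sim h^2$ or $\delta^{1-1/p}$, after using $\|\tilde g_m\|_{W^{1,1}(\Omega_h)} \le C(h|\log h|)^{1-m}$ — which follows by a priori estimates for the regularized Green's function together with the known $W^{1,1}$-type bounds — these contribute the advertised $Ch(h|\log h|)^{1-m}\|u\|_{W^{2,\infty}}$ remainder.

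The term that genuinely produces the $\|\tilde g_m - g_{mh}\|_{W^{1,1}(\Omega_h)}$ factor in the first line of the estimate is the "consistency-like" quantity $a_h(\tilde u - v_h, \tilde g_m - g_{mh})$ (or $a_h(\text{error}, \tilde g_m - g_{mh})$ after the orthogonality cancellations): bounding it by $\|\tilde u - v_h\|_{W^{1,\infty}(\Omega_h)}\,\|\tilde g_m - g_{mh}\|_{W^{1,1}(\Omega_h)}$ via Hölder's inequality on $\nabla$-terms and on the zeroth-order terms gives exactly the first summand, while the $Ch\|u\|_{W^{2,\infty}}$ inside the parentheses comes from replacing $v_h$ by $\tilde u$ up to interpolation error (or absorbing a residual skin term of size $\delta^{1/2}\|\cdot\|$ that is comparable). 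So the proof is a bookkeeping argument: (1) insert $g_{mh}$ and reduce to $a_h$; (2) subtract the exact/discrete equations to reach approximate Galerkin orthogonality with explicit perturbation terms $I_1,\dots,I_4$; (3) estimate the genuine consistency term by Hölder to get the $\|\tilde g_m - g_{mh}\|_{W^{1,1}}$ factor; (4) estimate each $I_k$ by boundary-skin machinery to get the $Ch(h|\log h|)^{1-m}\|u\|_{W^{2,\infty}}$ remainder.

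The main obstacle I anticipate is step (4), specifically term $I_4$ — the one singled out in Remark \ref{rem1 in sec3}(iii) as responsible for the irreducible $O(h^2|\log h|)$ / $O(h)$ rates. Unlike the other skin terms, $I_4$ is expected to involve $\tilde g_m - g_{mh}$ itself (not just $\tilde g_m$) restricted to the skin, or a normal-derivative jump that cannot be absorbed purely by powers of $\delta$; controlling it requires carefully combining the skin estimate \eqref{eq: boundary-skin estimate 2} with a trace inequality and then with the $W^{1,1}$ bound for the Green's function, and keeping track of how the $|\log h|$ factor (present for $m=0$ but absent for $m=1$) propagates. The delicate point is to ensure that the $\|\tilde g_m - g_{mh}\|_{W^{1,1}(\Omega_h)}$ coefficient in front is multiplied only by $(\|\tilde u - v_h\|_{W^{1,\infty}} + Ch\|u\|_{W^{2,\infty}})$ and not by anything larger — i.e. that no skin term secretly contributes a term of the form (something $O(1)$) $\times \|\tilde g_m - g_{mh}\|_{W^{1,1}}$ that would break the subsequent Schwarz-type absorption argument in \sref{sec6}.
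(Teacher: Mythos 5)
Your overall architecture is the same as the paper's (insert $g_{mh}$ through $a_h(v_h-u_h,g_{mh})=(v_h-u_h,\partial^m\eta)_{\Omega_h}$, pass to an approximate Galerkin orthogonality with residual terms supported in the boundary skin, estimate the genuine term $a_h(\tilde u-v_h,\tilde g_m-g_{mh})$ by H\"older and the skin terms by \eref{eq: boundary-skin estimates}, \eref{eq: boundary-skin estimate 2} and Green's-function bounds), but your step (1)--(2) has a genuine gap: the term $(\tilde u-v_h,\partial^m\eta)_{\Omega_h}$ cannot be ``handled directly and absorbed into the stated bound.'' Since $\int_T\eta=1$ forces $\|\eta\|_{L^\infty}\sim h^{-N}$ and $\|\nabla\eta\|_{L^\infty}\sim h^{-1-N}$, one has $\|\partial\eta\|_{L^1(T)}\sim h^{-1}$, so for $m=1$ your direct bound is $Ch^{-1}\|\tilde u-v_h\|_{L^\infty(\Omega_h)}$, and even for $m=0$ it is $\|\tilde u-v_h\|_{L^\infty(\Omega_h)}$ with no small factor. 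Neither is dominated by the right-hand side of the Proposition, where $\|\tilde u-v_h\|_{W^{1,\infty}(\Omega_h)}$ appears only multiplied by $\|\tilde g_m-g_{mh}\|_{W^{1,1}(\Omega_h)}\sim(h|\log h|)^{1-m}$; and it would also destroy the $Ch|\log h|$ coefficient in \tref{main theorem} for general $v_h$. The paper never estimates this term at all: in the proof of \lref{lem: asymptotic Galerkin orthogonality}, integration by parts together with $-\Delta g+g=\partial^m\eta$ in $\Omega$ and $\operatorname{supp}\eta\subset\Omega_h\cap\Omega$ gives $a_h(\tilde u-v_h,\tilde g)=(\tilde u-v_h,\partial^m\eta)_{\Omega_h}+\operatorname{Res}_g(\tilde u-v_h)$, and when this is inserted into $(v_h-u_h,\partial^m\eta)_{\Omega_h}=a_h(\tilde u-v_h,\tilde g-g_h)-a_h(\tilde u-v_h,\tilde g)+\operatorname{Res}_u(g_h)$ the troublesome term cancels identically, leaving only the skin residual $I_2=-\operatorname{Res}_g(\tilde u-v_h)$ of size $(h|\log h|)^{1-m}\|\tilde u-v_h\|_{L^\infty(\Omega_h)}$. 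Equivalently: in your route you must use the PDE for $g$ on $a_h(\tilde u-v_h,\tilde g)$; estimating that term directly costs $\|\tilde g\|_{W^{1,1}}$, i.e.\ $O(1)$ ($m=0$) or $O(|\log h|)$ ($m=1$) times $\|\tilde u-v_h\|_{W^{1,\infty}}$, which is too large. This ``third use'' of Galerkin orthogonality/the dual PDE is the missing idea.

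Secondary inaccuracies: you swap the roles of the last two residual terms. In the paper $I_4=\operatorname{Res}_u(\tilde g)$ involves only $\tilde g$ (not $\tilde g-g_h$) and is handled via the signed-skin representation of \lref{lem: another representation of Res} plus \cref{cor: boundary-skin estimates for g}, yielding the remainder $Ch(h|\log h|)^{1-m}\|u\|_{W^{2,\infty}(\Omega)}$; the term containing $\tilde g-g_h$ in the skin is $I_3=-\operatorname{Res}_u(\tilde g-g_h)$, and the factor $Ch\|u\|_{W^{2,\infty}(\Omega)}$ multiplying $\|\tilde g-g_h\|_{W^{1,1}(\Omega_h)}$ comes from $\|\partial_{n_h}\tilde u-\tilde\tau\|_{L^\infty(\Gamma_h)}\le Ch\|u\|_{W^{2,\infty}(\Omega)}$ (using $\|n\circ\pi-n_h\|_{L^\infty}\le Ch$ and the skin estimates) combined with \eref{eq: boundary-skin estimate 2} and the trace inequality --- not from ``replacing $v_h$ by $\tilde u$ up to interpolation error.'' You correctly flag the danger of a hidden $O(1)\times\|\tilde g_m-g_{mh}\|_{W^{1,1}}$ contribution, but the proposal does not carry out the estimate that excludes it.
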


It is immediate to conclude \tref{main theorem} from \pref{prop: main result of Sec. 3.1} combined with \eref{eq: W1,1 estimate for g-gh}.
The rest of this section is thus devoted to the proof of \pref{prop: main result of Sec. 3.1}, whereas \eref{eq: W1,1 estimate for g-gh} will be established in Sections \ref{sec4}--\ref{sec6} below.
From now on, we suppress the subscript $m$ of $g_m$ and $g_{mh}$ for simplicity, as far as there is no fear of confusion.

Let us proceed to the proof of \pref{prop: main result of Sec. 3.1}.
Define functionals for $v \in H^1(\Omega_h)$, which will represent ``residuals'' of Galerkin orthogonality relation, by
\begin{align*}
	\operatorname{Res}_u(v) &= (-\Delta\tilde u + \tilde u - \tilde f, v)_{\Omega_h \setminus \Omega} + (\partial_{n_h} \tilde u - \tilde\tau, v)_{\Gamma_h}, \\
	\operatorname{Res}_g(v) &= (v, -\Delta\tilde g + \tilde g)_{\Omega_h \setminus \Omega} + (v, \partial_{n_h} \tilde g)_{\Gamma_h}.
\end{align*}
If in addition $v \in H^1(\tilde\Omega)$ in the expanded domain $\tilde\Omega = \Omega \cup \Gamma(\delta)$, then $\operatorname{Res}_u(v)$ admits another expression.
To observe this, we introduce ``signed'' integration defined as follows:
\begin{align*}
	(\phi, \psi)_{\Omega_h \triangle \Omega}' &:= (\phi, \psi)_{\Omega_h \setminus \Omega} - (\phi, \psi)_{\Omega \setminus \Omega_h}, \\
	(\phi, \psi)_{\Gamma_h \cup \Gamma}' &:= (\phi, \psi)_{\Gamma_h} - (\phi, \psi)_{\Gamma}, \\
	a_{\Omega_h \triangle \Omega}'(\phi, \psi) &:= (\nabla\phi, \nabla\psi)_{\Omega_h \triangle \Omega}' + (\phi, \psi)_{\Omega_h \triangle \Omega}'.
\end{align*}

\begin{lem} \label{lem: another representation of Res}
	For $v \in H^1(\tilde\Omega)$ we have
	\begin{equation*}
		\operatorname{Res}_u(v) = -(\tilde f, v)_{\Omega_h \triangle \Omega}' - (\tilde\tau, v)_{\Gamma_h \cup \Gamma}' + a_{\Omega_h \triangle \Omega}'(\tilde u, v).
	\end{equation*}	
\end{lem}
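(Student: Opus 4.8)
The plan is to deduce the identity purely from Green's formula, exploiting the fact that $\tilde u$, $\tilde f$, and $\tilde\tau$ are genuine extensions of $u$, $f$, and $\tau$, so that $-\Delta\tilde u + \tilde u = \tilde f$ holds a.e.\ in $\Omega$ and $\partial_n\tilde u = \tilde\tau$ on $\Gamma$ (although \eref{eq: Poisson} fails in $\tilde\Omega\setminus\overline\Omega$, it is still in force on $\Omega$ itself). Since $\tilde u\in W^{2,\infty}(\tilde\Omega)$ and $v\in H^1(\tilde\Omega)$ with $\tilde\Omega\supset\Omega\cup\Omega_h$, every bulk integral and every trace appearing below is well defined, and as $\Omega_h$ is Lipschitz while $\Omega$ is smooth, Green's formula applies on both.

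First I would trade the signed bilinear form for a difference of genuine ones: up to null sets one has $\Omega_h = (\Omega_h\cap\Omega)\cup(\Omega_h\setminus\Omega)$ and $\Omega = (\Omega_h\cap\Omega)\cup(\Omega\setminus\Omega_h)$, so additivity of the integral yields $a_{\Omega_h\triangle\Omega}'(\tilde u, v) = a_h(\tilde u, v) - a(\tilde u, v)$. Then I would apply Green's formula on $\Omega_h$ and on $\Omega$:
\[ a_h(\tilde u, v) = (-\Delta\tilde u + \tilde u, v)_{\Omega_h} + (\partial_{n_h}\tilde u, v)_{\Gamma_h}, \qquad a(\tilde u, v) = (-\Delta\tilde u + \tilde u, v)_{\Omega} + (\partial_{n}\tilde u, v)_{\Gamma}. \]

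Next I would insert the PDE and the boundary condition. On $\Omega$, hence on $\Omega_h\cap\Omega\subset\Omega$, one has $-\Delta\tilde u + \tilde u = \tilde f$ a.e., and on $\Gamma$ one has $\partial_n\tilde u = \tilde\tau$; therefore $a(\tilde u, v) = (\tilde f, v)_\Omega + (\tilde\tau, v)_\Gamma$, whereas $(-\Delta\tilde u + \tilde u, v)_{\Omega_h} = (\tilde f, v)_{\Omega_h\cap\Omega} + (-\Delta\tilde u + \tilde u, v)_{\Omega_h\setminus\Omega}$. Splitting also $(\tilde f, v)_\Omega = (\tilde f, v)_{\Omega_h\cap\Omega} + (\tilde f, v)_{\Omega\setminus\Omega_h}$, the common term $(\tilde f, v)_{\Omega_h\cap\Omega}$ cancels in the subtraction and one is left with
\[ a_{\Omega_h\triangle\Omega}'(\tilde u, v) = (-\Delta\tilde u + \tilde u, v)_{\Omega_h\setminus\Omega} + (\partial_{n_h}\tilde u, v)_{\Gamma_h} - (\tilde f, v)_{\Omega\setminus\Omega_h} - (\tilde\tau, v)_\Gamma. \]

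Finally I would compare with $\operatorname{Res}_u(v)$. Writing $(-\Delta\tilde u + \tilde u, v)_{\Omega_h\setminus\Omega} = (-\Delta\tilde u + \tilde u - \tilde f, v)_{\Omega_h\setminus\Omega} + (\tilde f, v)_{\Omega_h\setminus\Omega}$ and $(\partial_{n_h}\tilde u, v)_{\Gamma_h} = (\partial_{n_h}\tilde u - \tilde\tau, v)_{\Gamma_h} + (\tilde\tau, v)_{\Gamma_h}$, and recalling that $(\tilde f, v)_{\Omega_h\triangle\Omega}' = (\tilde f, v)_{\Omega_h\setminus\Omega} - (\tilde f, v)_{\Omega\setminus\Omega_h}$ and $(\tilde\tau, v)_{\Gamma_h\cup\Gamma}' = (\tilde\tau, v)_{\Gamma_h} - (\tilde\tau, v)_\Gamma$, the previous display rearranges verbatim into $\operatorname{Res}_u(v) = -(\tilde f, v)_{\Omega_h\triangle\Omega}' - (\tilde\tau, v)_{\Gamma_h\cup\Gamma}' + a_{\Omega_h\triangle\Omega}'(\tilde u, v)$, which is the claim. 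There is no genuine obstacle here: the only thing to watch is the bookkeeping of signs and of which subdomain each term lives on, while the sole analytic input, namely integration by parts together with the a.e.\ validity of \eref{eq: Poisson} on $\Omega$, is immediate from the assumed $W^{2,\infty}$-regularity of $\tilde u$ and the Lipschitz regularity of $\Omega_h$.
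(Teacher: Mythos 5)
Your proposal is correct and follows essentially the same route as the paper: the paper's ``signed'' integration-by-parts formula over $\Omega_h\triangle\Omega$ is exactly your decomposition $a_{\Omega_h\triangle\Omega}'(\tilde u,v)=a_h(\tilde u,v)-a(\tilde u,v)$ followed by Green's formula on $\Omega_h$ and on $\Omega$, after which both arguments insert $-\Delta u+u=f$ in $\Omega$ and $\partial_n u=\tau$ on $\Gamma$ (valid for the extensions since they restrict to the originals) and rearrange the terms. The sign bookkeeping in your final step checks out, so no gap remains.
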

\begin{proof}
	Notice that the following integration-by-parts formula holds:
	\begin{equation*}
		(-\Delta \tilde u, v)_{\Omega_h \triangle \Omega}' = (\nabla\tilde u, \nabla v)_{\Omega_h \triangle \Omega}' - (\partial_{n_h}\tilde u, v)_{\Gamma_h} + (\partial_{n} u, v)_{\Gamma}.
	\end{equation*}
	From this formula and \eref{eq: Poisson} it follows that
	\begin{align*}
		(-\Delta \tilde u, v)_{\Omega_h \setminus \Omega} + (\partial_{n_h} \tilde u, v)_{\Gamma_h} &= (-\Delta u, v)_{\Omega \setminus \Omega_h} + (\nabla\tilde u, \nabla v)_{\Omega_h \triangle \Omega}' + (\partial_{n} u, v)_{\Gamma} \\
			&= -(u - f, v)_{\Omega \setminus \Omega_h} + (\nabla\tilde u, \nabla v)_{\Omega_h \triangle \Omega}' + (\tau, v)_{\Gamma}.
	\end{align*}
	Substituting this into the definition of $\operatorname{Res}_u(v)$ leads to the desired equality.
\end{proof}

Now we show that $\operatorname{Res}_u(\cdot)$ and $\operatorname{Res}_g(\cdot)$ represent residuals of Galerkin orthogonality relation for $\tilde u - u_h$ and $\tilde g - g_h$, respectively.
\begin{lem} \label{lem: asymptotic Galerkin orthogonality}
	For all $v_h \in V_h$ we have
	\begin{equation*}
		a_h(\tilde u - u_h, v_h) = \operatorname{Res}_u(v_h), \quad a_h(v_h, \tilde g - g_h) = \operatorname{Res}_g(v_h),
	\end{equation*}
	and
	\begin{align*}
		(\tilde u - u_h, \partial^{m}\eta)_{\Omega_h} &= a_h(\tilde u - v_h, \tilde g - g_h) - \operatorname{Res}_g(\tilde u - v_h) - \operatorname{Res}_u(\tilde g - g_h) + \operatorname{Res}_u(\tilde g) \\
			&=: I_1 + I_2 + I_3 + I_4.
	\end{align*}
\end{lem}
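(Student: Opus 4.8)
The plan is to obtain all three identities from Green's formula on $\Omega_h$ combined with \eref{eq: FEM}, the defining relations of $g_h$, and the linearity of $\operatorname{Res}_u$ and $\operatorname{Res}_g$. The ingredients I will need are that $\tilde u\in W^{2,\infty}(\tilde\Omega)\subset H^2(\Omega_h)$ and that $\tilde g=Pg\in W^{2,\infty}(\tilde\Omega)\subset H^2(\Omega_h)$ — the latter inclusion being precisely where the mapping property of $P$ (\pref{prop: stability of extension}), equivalently the fact that the reflection coefficients $3,-2$ make $Pg$ of class $C^1$ across $\Gamma$, is used — together with the observation that $\operatorname{supp}\eta\subset\Omega_h\cap\Omega$, which holds since $\operatorname{supp}\eta\cap\Gamma(2\delta)=\emptyset$ while $\Omega_h\setminus\Omega\subset\Gamma(\delta)$.

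First I would establish the two residual identities. For $v_h\in V_h\subset H^1(\Omega_h)$, Green's formula on $\Omega_h$ gives $a_h(\tilde u,v_h)=(-\Delta\tilde u+\tilde u,v_h)_{\Omega_h}+(\partial_{n_h}\tilde u,v_h)_{\Gamma_h}$; subtracting the relation $a_h(u_h,v_h)=(\tilde f,v_h)_{\Omega_h}+(\tilde\tau,v_h)_{\Gamma_h}$ of \eref{eq: FEM}, and noting that $-\Delta\tilde u+\tilde u-\tilde f=-\Delta u+u-f=0$ on $\Omega_h\cap\Omega$ by \eref{eq: Poisson} so that the volume integral reduces to $\Omega_h\setminus\Omega$, we arrive at $a_h(\tilde u-u_h,v_h)=\operatorname{Res}_u(v_h)$. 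The same computation with $\tilde g$ in place of $\tilde u$ yields, for every $w\in H^1(\Omega_h)$,
\[
	a_h(w,\tilde g)=(w,-\Delta\tilde g+\tilde g)_{\Omega_h}+(w,\partial_{n_h}\tilde g)_{\Gamma_h}.
\]
Since $-\Delta\tilde g+\tilde g=-\Delta g+g=\partial^m\eta$ on $\Omega_h\cap\Omega$ and $(w,\partial^m\eta)_{\Omega_h\setminus\Omega}=0$, the part of the volume term over $\Omega_h\cap\Omega$ equals $(w,\partial^m\eta)_{\Omega_h}$, so the displayed identity rearranges to $a_h(w,\tilde g)-(w,\partial^m\eta)_{\Omega_h}=\operatorname{Res}_g(w)$ for all $w\in H^1(\Omega_h)$. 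Taking $w=v_h$ and subtracting $a_h(v_h,g_h)=(v_h,\partial^m\eta)_{\Omega_h}$ gives $a_h(v_h,\tilde g-g_h)=\operatorname{Res}_g(v_h)$.

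For the decomposition I would start from $(\tilde u-u_h,\partial^m\eta)_{\Omega_h}=(\tilde u-v_h,\partial^m\eta)_{\Omega_h}+a_h(v_h-u_h,g_h)$ (using $v_h-u_h\in V_h$), split $a_h(v_h-u_h,g_h)=a_h(v_h-\tilde u,g_h)+a_h(\tilde u-u_h,g_h)$, and handle the two pieces with the identities just proved. The first residual identity with test function $g_h\in V_h$, together with linearity, gives $a_h(\tilde u-u_h,g_h)=\operatorname{Res}_u(g_h)=\operatorname{Res}_u(\tilde g)-\operatorname{Res}_u(\tilde g-g_h)$; and $a_h(v_h-\tilde u,g_h)=a_h(v_h-\tilde u,\tilde g)+a_h(\tilde u-v_h,\tilde g-g_h)$, where the general identity $a_h(w,\tilde g)-(w,\partial^m\eta)_{\Omega_h}=\operatorname{Res}_g(w)$ with $w=v_h-\tilde u$ gives $a_h(v_h-\tilde u,\tilde g)=-(\tilde u-v_h,\partial^m\eta)_{\Omega_h}-\operatorname{Res}_g(\tilde u-v_h)$. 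Adding these contributions, the two copies of $(\tilde u-v_h,\partial^m\eta)_{\Omega_h}$ cancel and one is left with $a_h(\tilde u-v_h,\tilde g-g_h)-\operatorname{Res}_g(\tilde u-v_h)-\operatorname{Res}_u(\tilde g-g_h)+\operatorname{Res}_u(\tilde g)=I_1+I_2+I_3+I_4$.

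The one point requiring attention is the validity of Green's formula for $\tilde g$: because $\Gamma$ is not aligned with $\mathcal T_h$ and passes through $\Omega_h$, a naive element-by-element integration by parts would leave an interface contribution $([\partial_n\tilde g]_\Gamma,w)$, and this must be shown to vanish. It does exactly because $\tilde g=Pg$ belongs to $H^2(\Omega_h)$, i.e.\ has no jump of normal derivative across $\Gamma$; this is the effect of the $C^1$ reflection built into $P$ and is the only place where the specific construction of $P$, rather than an arbitrary stable extension, matters. Everything else is bookkeeping with bilinearity and the defining relations.
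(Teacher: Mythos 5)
Your proposal is correct and follows essentially the same route as the paper: integration by parts on $\Omega_h$ together with \eref{eq: Poisson}, \eref{eq: FEM} and the definition of $g_h$ gives the two residual identities, and the third follows from expanding $a_h(v_h-u_h,g_h)$ and using the identity $a_h(w,\tilde g)=(w,\partial^m\eta)_{\Omega_h}+\operatorname{Res}_g(w)$ plus linearity of $\operatorname{Res}_u$, exactly as in the paper's proof (your explicit remark that $\tilde g=Pg\in H^2(\Omega_h)$ justifies the integration by parts across $\Gamma$ is a point the paper leaves implicit but uses via \tref{thm: extension operator}).
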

\begin{proof}
	From integration by parts and from the definitions of $u$ and $u_h$ we have
	\begin{align*}
		a_h(\tilde u - u_h, v_h) = (-\Delta\tilde u + \tilde u, v_h)_{\Omega_h} + (\partial_{n_h}\tilde u, v_h)_{\Gamma_h} - (\tilde f, v_h)_{\Omega_h} - (\tilde\tau, v_h)_{\Gamma_h} = \operatorname{Res}_u(v_h).
	\end{align*}
	The second equality is obtained in the same way.
	To show the third equality, we observe that
	\begin{align*}
		(v_h - u_h, \partial^m\eta)_{\Omega_h} &= a_h(v_h - u_h, g_h) = a_h(v_h - \tilde u, g_h) + a_h(\tilde u - u_h, g_h) \\
			&= a_h(\tilde u - v_h, \tilde g - g_h) - a_h(\tilde u - v_h, \tilde g) + \operatorname{Res}_u(g_h).
	\end{align*}
	It follows from integration by parts, $-\Delta g + g = \partial^m\eta$ in $\Omega$, and $\operatorname{supp}\eta\subset \Omega_h\cap\Omega$, that
	\begin{align*}
		a_h(\tilde u - v_h, \tilde g) &= (\tilde u - v_h, -\Delta\tilde g + \tilde g)_{\Omega_h} + (\tilde u - v_h, \partial_{n_h}\tilde g)_{\Gamma_h}
		= (u - v_h, \partial^m\eta)_{\Omega_h\cap\Omega} + \operatorname{Res}_g(\tilde u - v_h) \\
			&= (\tilde u - v_h, \partial^m\eta)_{\Omega_h} + \operatorname{Res}_g(\tilde u - v_h).
	\end{align*}
	Combining the two relations above yields the third equality.
\end{proof}

By the H\"older inequality, $|I_1| \le \|\tilde u - v_h\|_{W^{1,\infty}(\Omega_h)} \|\tilde g - g_h\|_{W^{1,1}(\Omega_h)}$.
The other terms are estimated in the following three lemmas.
There, boundary-skin estimates for $g$ will be frequently exploited, which are collected in the appendix.
\begin{lem}
	$|I_2| \le C(h|\log h|)^{1-m} \, \|\tilde u - v_h\|_{L^\infty(\Omega_h)}$.
\end{lem}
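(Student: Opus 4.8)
The plan is to exploit the definition $I_2 = -\operatorname{Res}_g(\tilde u - v_h)$ together with the explicit form of $\operatorname{Res}_g$, namely
\[
	\operatorname{Res}_g(\tilde u - v_h) = (\tilde u - v_h, -\Delta\tilde g + \tilde g)_{\Omega_h \setminus \Omega} + (\tilde u - v_h, \partial_{n_h}\tilde g)_{\Gamma_h}.
\]
Since $\tilde u - v_h \in C(\overline\Omega_h)$, the first term is bounded by $\|\tilde u - v_h\|_{L^\infty(\Omega_h)} \|-\Delta\tilde g + \tilde g\|_{L^1(\Omega_h \setminus \Omega)}$ and the second by $\|\tilde u - v_h\|_{L^\infty(\Omega_h)} \|\partial_{n_h}\tilde g\|_{L^1(\Gamma_h)}$. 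Thus everything reduces to controlling two boundary-skin integrals of the regularized Green's function $g = g_m$: the volume integral over the thin layer $\Omega_h \setminus \Omega$ (which has width $O(\delta) = O(h^2)$ and lies inside $\Gamma(\delta)$), and the surface integral over $\Gamma_h$.

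First I would invoke the boundary-skin estimate \eref{eq: boundary-skin estimate 2} with $p=1$ to write $\|-\Delta\tilde g + \tilde g\|_{L^1(\Omega_h \setminus \Omega)} \le C(\|-\Delta\tilde g + \tilde g\|_{L^1(\Gamma_h)} + \delta \|\nabla(-\Delta\tilde g + \tilde g)\|_{L^1(\Omega_h\setminus\Omega)})$, so that both contributions are ultimately reduced to integrals on $\Gamma_h$ (or, via the trace-type estimate, on the layer $\Gamma(\delta)$), plus the extra $\delta$-weighted term which is of lower order. Here one uses that $\operatorname{supp}\eta \cap \Gamma(2\delta) = \emptyset$, so $\tilde g = Pg$ agrees with the genuine extension of the smooth function $g$ near $\Gamma$ and all its derivatives appearing above are classical. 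What remains is to estimate, in the $L^1(\Gamma_h)$-norm (equivalently, up to the harmless factor $1+C\delta$ from \eref{eq: boundary-skin estimates}$_1$, in the $L^1(\Gamma)$-norm), the quantities $\partial_{n_h}\tilde g$, $-\Delta\tilde g + \tilde g$, and $\nabla(-\Delta\tilde g + \tilde g)$. These are precisely the boundary-skin bounds for $g$ collected in the appendix, which I would cite directly; the key scaling input is that the $L^1(\Gamma)$-norms of $g$ and its derivatives, integrated against the $(N-1)$-dimensional surface measure, behave like $\sum_j d_j^{N-1}\|\cdot\|_{L^\infty(A_j\cap\Gamma)}$ and the pointwise decay of the regularized Green's function gives $O(1)$ for $m=0$ and $O(|\log h|^{-1}\cdot h^{-1}\cdot h|\log h|) = O(1)$ — more carefully, $O(h|\log h|)$ versus $O(1)$ — producing the factor $(h|\log h|)^{1-m}$.

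The main obstacle I anticipate is bookkeeping the correct power of $h$ (and the logarithm for $m=0$) in the surface integral $\|\partial_{n_h}\tilde g\|_{L^1(\Gamma_h)}$: the normal derivative of $g$ on $\Gamma$ vanishes by construction, so $\partial_{n_h}\tilde g$ on $\Gamma_h$ is entirely a domain-perturbation effect, controlled by $\|n\circ\pi - n_h\|_{L^\infty(\Gamma_h)} \le Ch$ times $\|\nabla\tilde g\|_{L^1(\Gamma_h)}$ plus a term from $\partial_n g = 0$ transferred from $\Gamma$ to $\Gamma_h$ via \eref{eq: boundary-skin estimates}$_3$, which contributes $C\delta\|\nabla^2 g\|$-type quantities. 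Getting these to combine into exactly $C(h|\log h|)^{1-m}$ requires the dyadic-sum estimates \eref{eq: geometric sequence} applied to the appendix bounds on $g$; once those are in hand the conclusion $|I_2| \le C(h|\log h|)^{1-m}\|\tilde u - v_h\|_{L^\infty(\Omega_h)}$ follows immediately.
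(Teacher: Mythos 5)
Your overall strategy coincides with the paper's: bound both terms of $\operatorname{Res}_g(\tilde u - v_h)$ by $\|\tilde u - v_h\|_{L^\infty(\Omega_h)}$ times $L^1$-norms of $\tilde g$ on the skin and on $\Gamma_h$, and for the surface term exploit $(\nabla g)\circ\pi\cdot n\circ\pi=0$ to split $\partial_{n_h}\tilde g$ into the normal-vector error (giving $Ch\|\nabla\tilde g\|_{L^1(\Gamma_h)}\le C(h|\log h|)^{1-m}$) and the transfer error handled by \eref{eq: boundary-skin estimates}$_3$ (giving $C\|\nabla^2\tilde g\|_{L^1(\Gamma(\delta))}\le Ch^{1-m}$); that part is exactly the paper's argument. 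However, your treatment of the volume term through \eref{eq: boundary-skin estimate 2} is flawed as written. With $p=1$ that estimate reads $\|f\|_{L^1(\Omega_h\setminus\Omega)}\le C(\delta\|f\|_{L^1(\Gamma_h)}+\delta\|\nabla f\|_{L^1(\Omega_h\setminus\Omega)})$; you dropped the factor $\delta^{1/p}=\delta$ in front of the trace term. Without it the step fails outright: by \cref{cor: boundary-skin estimates for g}, $\|{-\Delta\tilde g+\tilde g}\|_{L^1(\Gamma_h)}$ is of size $h^{-1-m}$, which would destroy the desired bound $Ch^{1-m}$. Moreover, even with the correct $\delta$, your route produces the term $\delta\|\nabla(-\Delta\tilde g+\tilde g)\|_{L^1(\Omega_h\setminus\Omega)}$, i.e.\ third derivatives of $\tilde g$ in the skin, which nothing in the paper controls: the extension $P$ is only shown to be stable in $W^{k,p}$ for $k\le 2$ (\tref{thm: extension operator}), and the Green's-function estimates of the appendix stop at $k=2$.

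The detour is also unnecessary. Since $\Omega_h\setminus\Omega\subset\Gamma(\delta)$, you can bound the volume term directly,
\begin{equation*}
	\|{-\Delta\tilde g+\tilde g}\|_{L^1(\Omega_h\setminus\Omega)}\le C\|\tilde g\|_{W^{2,1}(\Gamma(\delta))}\le Ch^{1-m},
\end{equation*}
the last inequality being \cref{cor: boundary-skin estimates for g} (the smallness of the skin, $\delta=C_{0E}h^2$, is already built into that estimate). This is what the paper does, and with this replacement the rest of your argument goes through and yields $|I_2|\le C(h|\log h|)^{1-m}\|\tilde u - v_h\|_{L^\infty(\Omega_h)}$ as claimed.
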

\begin{proof}
	By the H\"older inequality,
	\begin{equation*}
		|\operatorname{Res}_g(\tilde u - v_h)| \le \|\tilde u - v_h\|_{L^\infty(\Omega_h)} (\|\tilde g\|_{W^{2,1}(\Gamma(\delta))} + \|\partial_{n_h}\tilde g\|_{L^1(\Gamma_h)}),
	\end{equation*}
	where $\|\tilde g\|_{W^{2,1}(\Gamma(\delta))} \le Ch^{1-m}$ as a result of \cref{cor: boundary-skin estimates for g}.
	Since $(\nabla g)\circ\pi \cdot n\circ\pi = 0$ on $\Gamma_h$, it follows again from \cref{cor: boundary-skin estimates for g} that
	\begin{align*}
		\|\partial_{n_h}\tilde g\|_{L^1(\Gamma_h)} &\le \|\nabla\tilde g \cdot (n_h - n\circ\pi)\|_{L^1(\Gamma_h)} + \|\big( \nabla\tilde g - (\nabla\tilde g)\circ\pi \big) \cdot n\circ\pi\|_{L^1(\Gamma_h)} \\
			&\le Ch \|\nabla\tilde g\|_{L^1(\Gamma_h)} + C\|\nabla^2\tilde g\|_{L^1(\Gamma(\delta))} \le C (h|\log h|)^{1-m} + Ch^{1-m},
	\end{align*}
	which completes the proof.
\end{proof}

\begin{lem}
	$|I_3| \le Ch\|u\|_{W^{2,\infty}(\Omega)} \|\tilde g - g_h\|_{W^{1,1}(\Omega_h)}$.
\end{lem}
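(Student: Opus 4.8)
Recall that $I_3 = -\operatorname{Res}_u(\tilde g - g_h)$ and that $\tilde g - g_h \in H^1(\Omega_h)$ but not necessarily in $H^1(\tilde\Omega)$, so the alternative representation of $\operatorname{Res}_u$ from Lemma~\ref{lem: another representation of Res} is \emph{not} available here; I must work directly from the defining formula
\[
	\operatorname{Res}_u(v) = (-\Delta\tilde u + \tilde u - \tilde f, v)_{\Omega_h \setminus \Omega} + (\partial_{n_h}\tilde u - \tilde\tau, v)_{\Gamma_h}, \qquad v = \tilde g - g_h.
\]
The plan is to estimate the two terms separately by H\"older's inequality, pulling out the $L^\infty$-norms of the $u$-dependent factors and absorbing the geometric smallness into the boundary-skin layer.

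For the volume term, I would bound $\|-\Delta\tilde u + \tilde u - \tilde f\|_{L^\infty(\Omega_h\setminus\Omega)} \le C\|u\|_{W^{2,\infty}(\Omega)}$ using the stability of the extensions $\tilde u, \tilde f$ from \sref{sec2.3} (note that, crucially, \eref{eq: Poisson} need not hold in $\Omega_h\setminus\Omega$, which is exactly why this term does not vanish). Then $|(-\Delta\tilde u + \tilde u - \tilde f, \tilde g - g_h)_{\Omega_h\setminus\Omega}| \le C\|u\|_{W^{2,\infty}(\Omega)}\|\tilde g - g_h\|_{L^1(\Omega_h\setminus\Omega)}$, and since $\Omega_h\setminus\Omega \subset \Gamma(\delta)$ with $\delta = C_{0E}h^2$, I would invoke the boundary-skin estimate \eref{eq: boundary-skin estimate 2} with $p=1$ to get $\|\tilde g - g_h\|_{L^1(\Omega_h\setminus\Omega)} \le C(\|\tilde g - g_h\|_{L^1(\Gamma_h)} + \delta\|\nabla(\tilde g - g_h)\|_{L^1(\Omega_h\setminus\Omega)})$, and then the trace-type / $W^{1,1}$ control gives $\le Ch\|\tilde g - g_h\|_{W^{1,1}(\Omega_h)}$ (the $\delta$ on the gradient term is more than enough; the factor $h$ comes from the $L^1(\Gamma_h)$ trace combined with a scaled trace inequality on $\Omega_h\setminus\Omega$, or alternatively directly from $|\Omega_h\setminus\Omega| \le C\delta = Ch^2$ after passing through $L^\infty$ on one factor — but here I keep $\tilde g - g_h$ in $L^1$, so I route it through the skin estimate). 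The cleanest route is: $\|\tilde g-g_h\|_{L^1(\Gamma_h)} \le Ch\|\tilde g-g_h\|_{W^{1,1}(\Omega_h)}$ is not quite standard in $L^1$, so I would instead directly use $\|w\|_{L^1(\Omega_h\setminus\Omega)} \le C\delta^{1/2}\|w\|_{L^1(\Gamma_h)}^{1/2}\cdots$ — actually the simplest is just \eref{eq: boundary-skin estimate 2} as written, giving a factor $\delta^{1/1}=\delta$ in front of the gradient and needing the $L^1(\Gamma_h)$-trace bounded by $\|\tilde g-g_h\|_{W^{1,1}(\Omega_h)}$ up to a constant (the standard $W^{1,1}$-trace theorem), which loses no power of $h$ on that piece; the surviving $h$ then has to come from elsewhere. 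Let me restate: I expect the term $(\partial_{n_h}\tilde u - \tilde\tau,\cdot)_{\Gamma_h}$ to carry the decisive factor $h$, while the volume term is $O(h^2)$ after using $|\Omega_h\setminus\Omega| \le Ch^2$ together with an $L^\infty$-bound on $\tilde g - g_h$ — but we want $L^1$, so I will instead keep it as $\le C\|u\|_{W^{2,\infty}}\|\tilde g-g_h\|_{L^1(\Omega_h\setminus\Omega)}$ and bound the latter by $Ch\|\tilde g-g_h\|_{W^{1,1}(\Omega_h)}$ via \eref{eq: boundary-skin estimate 2} after noting the $\Gamma_h$-trace term there is itself $O(h^{1/2})$ smaller — this is the routine bookkeeping I would not grind through.

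For the boundary term, the key observation is that the exact solution satisfies $\partial_n u = \tau$ on $\Gamma$, hence $(\partial_n u - \tau)\circ\pi = 0$ on $\Gamma_h$ in the sense of the pullback, and I split
\[
	\partial_{n_h}\tilde u - \tilde\tau
	= \nabla\tilde u\cdot(n_h - n\circ\pi) + \big(\nabla\tilde u - (\nabla\tilde u)\circ\pi\big)\cdot n\circ\pi + \big((\nabla u)\circ\pi\cdot n\circ\pi - \tau\circ\pi\big) + (\tau\circ\pi - \tilde\tau).
\]
The third bracket vanishes. The first is $O(h)$ in $L^\infty$ by the normal-error estimate $\|n\circ\pi - n_h\|_{L^\infty(\Gamma_h)} \le Ch$ from \cite{KOZ16}, times $\|\nabla\tilde u\|_{L^\infty} \le C\|u\|_{W^{2,\infty}}$. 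The second is $O(\delta) = O(h^2)$ in $L^\infty$ by the mean-value inequality along the normal, using $\|\nabla^2\tilde u\|_{L^\infty}$ and $\operatorname{dist}(x,\pi(x)) \le \delta$ for $x\in\Gamma_h$; it could also be read off \eref{eq: boundary-skin estimates}$_3$. The fourth is $O(h^2)$ likewise if $\tilde\tau$ is an extension compatible with $\pi$, or more precisely it is the difference between the prescribed extension $\tilde\tau$ and the pullback $\tau\circ\pi$, which by the construction / stability of $\tilde\tau$ in \sref{sec2.3} is controlled in $L^\infty(\Gamma_h)$ by $C\delta\,\|\tau\|_{W^{1,\infty}} \le Ch^2\|u\|_{W^{2,\infty}}$ (since $\tau$ inherits $W^{1,\infty}$-regularity from $u\in W^{2,\infty}$ via the trace). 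Altogether $\|\partial_{n_h}\tilde u - \tilde\tau\|_{L^\infty(\Gamma_h)} \le Ch\|u\|_{W^{2,\infty}(\Omega)}$, whence by H\"older
\[
	|(\partial_{n_h}\tilde u - \tilde\tau, \tilde g - g_h)_{\Gamma_h}|
	\le Ch\|u\|_{W^{2,\infty}(\Omega)}\,\|\tilde g - g_h\|_{L^1(\Gamma_h)}
	\le Ch\|u\|_{W^{2,\infty}(\Omega)}\,\|\tilde g - g_h\|_{W^{1,1}(\Omega_h)},
\]
the last step by the $W^{1,1}$-trace inequality (with constant depending only on the $C^{0,1}$-regularity of $\Omega_h$, uniform for $h \le 1$). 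Combining the volume and boundary contributions gives the claim.

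The main obstacle I anticipate is purely the boundary term's $L^\infty$-estimate — specifically the normal-derivative mismatch decomposition above — since one must be careful that the "error of $n$" enters as $n_h - n\circ\pi$ (pulled back to $\Gamma_h$) rather than $n_h - n$, so that the clean $O(h)$ bound from \cite{KOZ16} applies, and that the compatibility of the Neumann data extension $\tilde\tau$ with the projection $\pi$ is exactly what makes the third bracket vanish and the fourth $O(h^2)$. Everything else is H\"older plus the boundary-skin estimates \eref{eq: boundary-skin estimates}--\eref{eq: boundary-skin estimate 2} and a standard $W^{1,1}$-trace inequality, all already at our disposal.
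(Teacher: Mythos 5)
Your proposal follows essentially the same route as the paper: H\"older on both terms, the boundary-skin estimate \eqref{eq: boundary-skin estimate 2} plus the $W^{1,1}$-trace theorem for the volume piece, and the identical decomposition $\nabla\tilde u\cdot(n_h-n\circ\pi)+(\nabla\tilde u-(\nabla\tilde u)\circ\pi)\cdot n\circ\pi+(\tau\circ\pi-\tilde\tau)$ with the exact boundary condition killing the third bracket, yielding $\|\partial_{n_h}\tilde u-\tilde\tau\|_{L^\infty(\Gamma_h)}\le Ch\|u\|_{W^{2,\infty}}$. The only blemish is the hedging on the volume term: with $p=1$ the skin estimate carries the factor $\delta=C_{0E}h^2$ (not $\delta^{1/2}$) on the $\Gamma_h$-trace term, so that piece is in fact $O(h^2)\|\tilde g-g_h\|_{W^{1,1}(\Omega_h)}$ and no extra factor of $h$ needs to ``come from elsewhere.''
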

\begin{proof}
	By the H\"older inequality and stability of extensions,
	\begin{equation*}
		|\operatorname{Res}_u(\tilde g - g_h)| \le C\|u\|_{W^{2, \infty}(\Omega)} \|\tilde g - g_h\|_{L^1(\Omega_h\setminus\Omega)} + \|\partial_{n_h}\tilde u - \tilde\tau\|_{L^\infty(\Gamma_h)} \|\tilde g - g_h\|_{L^1(\Gamma_h)}.
	\end{equation*}
	From \eref{eq: boundary-skin estimate 2} and the trace theorem one has
	\begin{equation*}
		\|\tilde g - g_h\|_{L^1(\Omega_h\setminus\Omega)} \le C\delta (\|\tilde g - g_h\|_{L^1(\Gamma_h)} + \|\nabla(\tilde g - g_h)\|_{L^1(\Omega_h\setminus\Omega)})
		\le Ch^2 \|\tilde g - g_h\|_{W^{1,1}(\Omega_h)}.
	\end{equation*}
	From $(\nabla u)\circ\pi \cdot n\circ\pi = \tau\circ\pi$ on $\Gamma_h$, \eref{eq: boundary-skin estimates}, and the stability of extensions, it follows that
	\begin{align*}
		\|\partial_{n_h}\tilde u - \tilde\tau\|_{L^\infty(\Gamma_h)} &\le \|\nabla\tilde u\cdot (n_h - n\circ\pi)\|_{L^\infty(\Gamma_h)} + \|\big( \nabla\tilde u - (\nabla\tilde u)\circ\pi \big) \cdot n\circ\pi\|_{L^\infty(\Gamma_h)} + \|\tau\circ\pi - \tilde\tau\|_{L^\infty(\Gamma_h)} \\
			&\le Ch \|\nabla\tilde u\|_{L^\infty(\Gamma_h)} + C\delta\|\nabla^2\tilde u\|_{L^\infty(\Gamma(\delta))} + C\delta\|\nabla\tilde\tau\|_{L^\infty(\Gamma(\delta))}
			\le Ch \|u\|_{W^{2,\infty}(\Omega_h)}.
	\end{align*}
	Combining the estimates above and using the trace theorem once again, we conclude
	\begin{align*}
		|\operatorname{Res}_u(\tilde g - g_h)| &\le Ch^2 \|u\|_{W^{2,\infty}(\Omega)} \|\tilde g - g_h\|_{W^{1,1}(\Omega_h)} + Ch\|u\|_{W^{2,\infty}(\Omega_h)} \|\tilde g - g_h\|_{L^1(\Gamma_h)} \\
			&\le Ch\|u\|_{W^{2,\infty}(\Omega)} \|\tilde g - g_h\|_{W^{1,1}(\Omega_h)}.
	\end{align*}
	This completes the proof.
\end{proof}

\begin{lem} \label{lem: estimate of I4 in Sec3}
	$|I_4| \le Ch (h|\log h|)^{1-m} \|u\|_{W^{2,\infty}(\Omega)}$.
\end{lem}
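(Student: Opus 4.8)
The plan is to reduce everything to \lref{lem: another representation of Res}, applied with $v=\tilde g=Pg$. This is legitimate: $\operatorname{supp}\eta$ lies at distance at least $ch$ from $\Gamma$, so $g$ is smooth near $\Gamma$ and $\tilde g=Pg\in H^1(\tilde\Omega)$. We thus get
\[
	I_4 = \operatorname{Res}_u(\tilde g) = -(\tilde f, \tilde g)_{\Omega_h\triangle\Omega}' - (\tilde\tau, \tilde g)_{\Gamma_h\cup\Gamma}' + a_{\Omega_h\triangle\Omega}'(\tilde u, \tilde g),
\]
and I would estimate the three terms separately. Passing to this \emph{signed} representation (rather than bounding the defining expression of $\operatorname{Res}_u$) is what makes the boundary contribution work: in the original form the boundary integral $(\partial_{n_h}\tilde u-\tilde\tau, \tilde g)_{\Gamma_h}$ is, at face value, only $O(h)\|u\|_{W^{2,\infty}(\Omega)}$ — one has $\|\partial_{n_h}\tilde u-\tilde\tau\|_{L^\infty(\Gamma_h)}\le Ch\|u\|_{W^{2,\infty}(\Omega)}$ (as in the proof of the previous lemma) while $\|\tilde g\|_{L^1(\Gamma_h)}$ carries no power of $h$ — and $O(h)$ is not good enough for $m=0$. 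The signed form exposes the leading-order cancellation between the integrals over $\Gamma_h$ and over $\Gamma$.

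For the two volume terms, the $L^\infty$-stability of $\tilde f,\tilde u,\nabla\tilde u$ gives
\[
	|(\tilde f, \tilde g)_{\Omega_h\triangle\Omega}'| + |a_{\Omega_h\triangle\Omega}'(\tilde u, \tilde g)| \le C\|u\|_{W^{2,\infty}(\Omega)}\,\|\tilde g\|_{W^{1,1}(\Omega_h\triangle\Omega)}.
\]
On $\Omega\setminus\Omega_h\subset\Gamma(\delta)\cap\Omega$ one has $\tilde g=g$; on $\Omega_h\setminus\Omega\subset\Gamma(\delta)$ one has $\tilde g=Pg$, and \pref{prop: stability of extension} bounds $\|Pg\|_{W^{1,1}(\Gamma(\delta))}$ by $\|g\|_{W^{1,1}(\Omega\cap\Gamma(2\delta))}$. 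Both of the quantities $\|g\|_{W^{1,1}(\Gamma(\delta)\cap\Omega)}$ and $\|g\|_{W^{1,1}(\Omega\cap\Gamma(2\delta))}$ are estimated by combining \eref{eq: boundary-skin estimates}$_2$ with the boundary-skin estimates for $g$ in \cref{cor: boundary-skin estimates for g} (the chain terminating at $\|\nabla^2 g\|_{L^1(\Gamma(\delta)\cap\Omega)}\le Ch^{1-m}$); the factor $\delta=C_{0E}h^2$ coming from the thin collar yields $\|\tilde g\|_{W^{1,1}(\Omega_h\triangle\Omega)}\le Ch(h|\log h|)^{1-m}$, so these two terms already obey the asserted bound.

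For the boundary term, setting $F:=\tilde\tau\tilde g$ I would split
\[
	(\tilde\tau, \tilde g)_{\Gamma_h\cup\Gamma}' = \int_{\Gamma_h}\big(F - F\circ\pi\big)\,d\gamma_h + \Big(\int_{\Gamma_h}F\circ\pi\,d\gamma_h - \int_\Gamma F\,d\gamma\Big).
\]
By \eref{eq: boundary-skin estimates}$_1$ the second bracket is $\le C\delta\|F\|_{L^1(\Gamma)}\le C\delta\|\tilde\tau\|_{L^\infty(\Gamma)}\|g\|_{L^1(\Gamma)}$; since $\delta=C_{0E}h^2$ and $\|g\|_{L^1(\Gamma)}\le C|\log h|^{m}$ (trace inequality together with the standard $W^{1,1}$-bounds for $g$), this is $\le Ch^2|\log h|^{m}\|u\|_{W^{2,\infty}(\Omega)}$, which for $m\in\{0,1\}$ is dominated by $Ch(h|\log h|)^{1-m}\|u\|_{W^{2,\infty}(\Omega)}$. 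By \eref{eq: boundary-skin estimates}$_3$ with $p=1$ the first bracket is $\le\|F-F\circ\pi\|_{L^1(\Gamma_h)}\le C\|\nabla F\|_{L^1(\Gamma(\delta))}\le C\|u\|_{W^{2,\infty}(\Omega)}\|\tilde g\|_{W^{1,1}(\Gamma(\delta))}$, and $\|\tilde g\|_{W^{1,1}(\Gamma(\delta))}\le Ch(h|\log h|)^{1-m}$ by the same use of \pref{prop: stability of extension} and \cref{cor: boundary-skin estimates for g} as above. Summing the three contributions yields the claim.

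The step I expect to be the main obstacle is the boundary term, for the following reason: the trace of the regularized Green's function on $\Gamma$ (or $\Gamma_h$) carries no smallness in $h$, so $(\partial_{n_h}\tilde u-\tilde\tau,\tilde g)_{\Gamma_h}$ cannot be handled head-on. One must first invoke \lref{lem: another representation of Res} to isolate the $\Gamma_h$--$\Gamma$ cancellation, then use the boundary-skin estimates \eref{eq: boundary-skin estimates} to trade the resulting difference of surface integrals for $\nabla(\tilde\tau\tilde g)$ integrated over the width-$O(h^2)$ collar $\Gamma(\delta)$, and only then do the weighted bounds for $g,\nabla g,\nabla^2 g$ near $\Gamma$ — together with the fact that the singularity of $g$ sits at distance at least $ch$ from $\Gamma$ — produce the factor $h(h|\log h|)^{1-m}$. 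The delicate point is tracking exactly where the extra $|\log h|$ (for $m=0$) and the bare power $h$ (for $m=1$) arise; this is the term responsible for the fact that the convergence rate in \tref{main theorem} cannot be improved by higher-order elements, cf.\ \rref{rem1 in sec3}(iii). A secondary technical point is that $g$, being singular, does not admit the generic $W^{2,\infty}$-stable extension of \sref{sec2.3}, which is precisely why the tailor-made operator $P$ and its local stability \pref{prop: stability of extension} are needed to treat the part of the boundary skin lying outside $\Omega$.
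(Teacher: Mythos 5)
Your proposal is correct and follows essentially the same route as the paper: both invoke \lref{lem: another representation of Res} with $v=\tilde g$ and then bound the three signed terms via the boundary-skin estimates \eref{eq: boundary-skin estimates} and the layer estimates for $\tilde g$ in \cref{cor: boundary-skin estimates for g}, yielding $C\delta h^{-m}|\log h|^{1-m}\|u\|_{W^{2,\infty}(\Omega)}=Ch(h|\log h|)^{1-m}\|u\|_{W^{2,\infty}(\Omega)}$. The only difference is cosmetic: you split the boundary term as $\int_{\Gamma_h}(F-F\circ\pi)+\bigl(\int_{\Gamma_h}F\circ\pi-\int_\Gamma F\bigr)$ with $F=\tilde\tau\tilde g$, while the paper telescopes through $(\tau\circ\pi,g\circ\pi)_{\Gamma_h}$; both reduce to the same estimates.
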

\begin{proof}
	We recall from \lref{lem: another representation of Res} that
	\begin{equation*}
		\operatorname{Res}_u(\tilde g) = -(\tilde f, \tilde g)_{\Omega_h \triangle \Omega}' - (\tilde\tau, \tilde g)_{\Gamma_h \cup \Gamma}' + a_{\Omega_h \triangle \Omega}'(\tilde u, \tilde g).
	\end{equation*}
	Let us estimate each term in the right-hand side.
	By \eref{eq: boundary-skin estimates}$_2$ we obtain
	\begin{equation*}
		|(\tilde f, \tilde g)_{\Omega_h \triangle \Omega}'| \le \|\tilde f\|_{L^\infty(\Gamma(\delta))} \|\tilde g\|_{L^1(\Gamma(\delta))} \le C\delta |\log h|^{1-m} \|u\|_{W^{2,\infty}(\Omega)},
	\end{equation*}
	where $\delta= C_{0E}h^2$.  Next, from \eref{eq: boundary-skin estimates} and \cref{cor: boundary-skin estimates for g} we find that
	\begin{align*}
		(\tilde\tau, \tilde g)_{\Gamma_h \cup \Gamma}' &= |(\tau, g)_{\Gamma} - (\tilde\tau, \tilde g)_{\Gamma_h}| \le |(\tau, g)_\Gamma - (\tau\circ\pi, g\circ\pi)_{\Gamma_h}| + |(\tau\circ\pi, g\circ\pi - \tilde g)_{\Gamma_h}| + |(\tau\circ\pi - \tilde\tau, \tilde g)_{\Gamma_h}| \\
			&\le C\delta \|\tau\|_{L^\infty(\Gamma)} \|g\|_{L^1(\Gamma)} + C\|\tau\|_{L^\infty(\Gamma)} \|\nabla\tilde g\|_{L^1(\Gamma(\delta))} + C\delta \|\nabla\tilde\tau\|_{L^\infty(\Gamma(\delta))} \|\tilde g\|_{L^1(\Gamma_h)} \\
			&\le C\delta \|\nabla u\|_{L^\infty(\Omega)} |\log h|^m + C\|\nabla u\|_{L^\infty(\Omega)} \delta h^{-m} |\log h|^{1-m} + C\delta \|u\|_{W^{2,\infty}(\Omega)} |\log h|^{1-m} \\
			&\le C\delta h^{-m} |\log h|^{1-m} \|u\|_{W^{2,\infty}(\Omega)}.
	\end{align*}
	Finally, for the last term we obtain
	\begin{equation*}
		|a_{\Omega_h \triangle \Omega}'(\tilde u, \tilde g)| \le \|\tilde u\|_{W^{1,\infty}(\Gamma(\delta))} \|\tilde g\|_{W^{1,1}(\Gamma(\delta))} \le C\|u\|_{W^{1,\infty}(\Omega)} \delta h^{-m} |\log h|^{1-m}.
	\end{equation*}
	Collecting the above estimates proves the lemma.
\end{proof}

\pref{prop: main result of Sec. 3.1} is now an immediate consequence of Lemmas \ref{lem: asymptotic Galerkin orthogonality}--\ref{lem: estimate of I4 in Sec3}.

\section{Weighted $H^1$-estimates} \label{sec4}
As a consequence of the previous section, we need to estimate $\|\tilde g - g_h\|_{W^{1,1}(\Omega_h)}$, where we keep dropping the subscript $m$ (either 0 or 1) of $g_m$ and $g_{mh}$.
To this end we introduce a dyadic decomposition $\mathcal A_\Omega(x_0, d_0) = \{\Omega\cap A_j\}_{j=0}^J$ of $\Omega$, and observe from \eref{eq: L1 can be bdd by weighted Lp} that
\begin{equation} \label{eq: W1,1 is bdd by weighted H1}
	\|\tilde g - g_h\|_{W^{1,1}(\Omega_h)} \le C \sum_{j=0}^J d_j^{N/2} \|\tilde g - g_h\|_{H^1(\Omega_h \cap A_j)}.
\end{equation}
Then the weighted $H^1$-norm in the right-hand side is bounded as follows:

\begin{prop} \label{prop: weighted H1 estimate}
	There exists $K_0>0$ such that, for any dyadic decomposition $\mathcal A_\Omega(x_0, d_0) = \{\Omega \cap A_j\}_{j=0}^J$ of $\Omega$ with $d_0 = Kh$, $K \ge K_0$, we obtain
	\begin{equation} \label{eq: weighted H1 estimate}
		\sum_{j=0}^J d_j^{N/2} \|\tilde g - g_h\|_{H^1(\Omega_h \cap A_j)} \le CK^{m+N/2} h^{1-m} + C(h|\log h|)^{1-m} + C\sum_{j=0}^J d_j^{-1+N/2} \|\tilde g - g_h\|_{L^2(\Omega_h\cap A_j)}.
	\end{equation}
	Here the constants $K_0$ and $C$ are independent of $h, x_0, \partial$, and $K$.
\end{prop}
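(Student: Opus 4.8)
The plan is to run a local energy argument on each dyadic annulus $\Omega_h\cap A_j$ and then sum the resulting bounds against the weights $d_j^{N/2}$. Write $e:=\tilde g-g_h$ and split it as $e=\rho+\theta$, where $\rho:=\tilde g-\mathcal I_h\tilde g$ is the interpolation error and $\theta:=\mathcal I_h\tilde g-g_h\in V_h$ is the discrete error, so that $\|e\|_{H^1(\Omega_h\cap A_j)}\le\|\rho\|_{H^1(\Omega_h\cap A_j)}+\|\nabla\theta\|_{L^2(\Omega_h\cap A_j)}+\|\theta\|_{L^2(\Omega_h\cap A_j)}$. The estimate follows once the three corresponding weighted sums are controlled, the first and third by elementary means and the second by a local energy estimate whose superapproximation ``gain'' $h/d_j\le1/K$ is absorbed into the left-hand side for $K$ large. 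Throughout, fix for each $j$ a smooth cutoff $\omega_j$ with $\omega_j\equiv1$ on $A_j$, $\operatorname{supp}\omega_j\subset A_j^{(1/2)}$, and $\|\nabla^k\omega_j\|_{L^\infty}\le Cd_j^{-k}$; this is legitimate since $d_0=Kh\ge16h$ once $K_0\ge16$, and keeps all expanded annuli that occur within the range $s\le3/4$ so that \lref{lem: consistency of annuli and triangulation} and \cref{cor: support of interpolant} apply.

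First I would dispose of $\rho$. By the interpolation estimate recalled in \sref{sec2.1}, $\|\rho\|_{H^1(T)}\le Ch\|\nabla^2\tilde g\|_{L^2(M_T)}$ for each $T\in\mathcal T_h$, and summing over $\{T\in\mathcal T_h:T\cap A_j\neq\emptyset\}$, whose union lies in $A_j^{(1)}$ by \lref{lem: consistency of annuli and triangulation}, gives $\|\rho\|_{H^1(\Omega_h\cap A_j)}\le Ch\|\nabla^2\tilde g\|_{L^2(\Omega_h\cap A_j^{(1)})}$. Using \pref{prop: stability of extension} to pass from $\tilde g=Pg$ to $g$ across the boundary skin, the (regularized) Green's-function estimates and their boundary-skin counterparts collected in the appendix (cf.\ \cref{cor: boundary-skin estimates for g}), together with \eref{eq: geometric sequence}, then yield $\sum_jd_j^{N/2}\|\rho\|_{H^1(\Omega_h\cap A_j)}\le CK^{m+N/2}h^{1-m}+C(h|\log h|)^{1-m}$: the $K^{m+N/2}h^{1-m}$ comes from the innermost annulus $A_0=B(x_0;Kh)$, where only the crude bound $\|\nabla^2g\|_{L^2(A_0)}\le\|g\|_{H^2(\Omega)}\le C\|\partial^m\eta\|_{L^2}$ is available, and the logarithmic factor from summing $d_j^{-1}$ over the $J\sim|\log h|$ outer annuli. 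The third sum is equally harmless: since $d_j\le2\operatorname{diam}\Omega$ one has $\sum_jd_j^{N/2}\|\theta\|_{L^2(\Omega_h\cap A_j)}\le C\sum_jd_j^{-1+N/2}\|\theta\|_{L^2(\Omega_h\cap A_j)}\le C\sum_jd_j^{-1+N/2}\|e\|_{L^2(\Omega_h\cap A_j)}$ plus a $\rho$-contribution already accounted for.

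The core is the local energy estimate for $\theta$. A Caccioppoli manipulation gives $\|\nabla(\omega_j\theta)\|_{L^2(\Omega_h)}^2\le a_h(\omega_j^2\theta,\theta)+\|\theta\nabla\omega_j\|_{L^2(\Omega_h)}^2$, and I split $a_h(\omega_j^2\theta,\theta)=a_h(\omega_j^2\theta-\mathcal I_h(\omega_j^2\theta),\theta)+a_h(\mathcal I_h(\omega_j^2\theta),\theta)$. For the first term I invoke the superapproximation property of $\mathcal I_h$ — valid because $\theta$ is piecewise linear, so $\nabla^2(\omega_j^2\theta)$ on each $T$ involves only $\omega_j$-derivatives multiplying $\theta$ or $\nabla\theta$ — which gives $\|\omega_j^2\theta-\mathcal I_h(\omega_j^2\theta)\|_{H^1(\Omega_h)}\le C\frac{h}{d_j}\big(\|\nabla\theta\|_{L^2(\Omega_h\cap A_j^{(1)})}+d_j^{-1}\|\theta\|_{L^2(\Omega_h\cap A_j^{(1)})}\big)$. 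For the second term, \lref{lem: asymptotic Galerkin orthogonality} together with $\theta=\mathcal I_h\tilde g-g_h=-\rho+e$ gives $a_h(\mathcal I_h(\omega_j^2\theta),\theta)=-a_h(\mathcal I_h(\omega_j^2\theta),\rho)+\operatorname{Res}_g(\mathcal I_h(\omega_j^2\theta))$, where $\operatorname{supp}\mathcal I_h(\omega_j^2\theta)\subset A_j^{(3/4)}$ by \cref{cor: support of interpolant} and $\|\mathcal I_h(\omega_j^2\theta)\|_{H^1}\le C(\|\nabla\theta\|_{L^2(\Omega_h\cap A_j^{(1)})}+d_j^{-1}\|\theta\|_{L^2(\Omega_h\cap A_j^{(1)})})$. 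Combining with Cauchy--Schwarz, Young's inequality, and $\|\nabla\theta\|_{L^2(\Omega_h\cap A_j)}\le\|\nabla(\omega_j\theta)\|_{L^2(\Omega_h)}$ leads to
\begin{equation*}
	\|\nabla\theta\|_{L^2(\Omega_h\cap A_j)} \le \varepsilon\,\|\nabla\theta\|_{L^2(\Omega_h\cap A_j^{(1)})} + C_\varepsilon\Bigl( d_j^{-1}\|\theta\|_{L^2(\Omega_h\cap A_j^{(1)})} + \|\rho\|_{H^1(\Omega_h\cap A_j^{(1)})} + R_j \Bigr),
\end{equation*}
where $\varepsilon=\varepsilon(K)\to0$ as $K\to\infty$ and $R_j$ collects the contribution of $\operatorname{Res}_g(\mathcal I_h(\omega_j^2\theta))$.

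Finally I multiply by $d_j^{N/2}$, sum over $j$, use \eref{eq: reduction of expanded annuli to usual ones} to replace the expanded annuli $A_j^{(1)}$ by ordinary ones at the cost of a fixed constant, and fix $K_0$ so large that the resulting coefficient of $\sum_jd_j^{N/2}\|\nabla\theta\|_{L^2(\Omega_h\cap A_j)}$ is $<\tfrac12$, allowing that sum to be absorbed on the left. The remaining sums reduce to the three target quantities: $\sum_jd_j^{-1+N/2}\|\theta\|_{L^2(\Omega_h\cap A_j)}$ contributes (as above) to the third term plus lower order, $\sum_jd_j^{N/2}\|\rho\|_{H^1(\Omega_h\cap A_j)}$ was already estimated, and $\sum_jd_j^{N/2}R_j$ — the genuinely new domain-perturbation piece — must be shown to be $\le CK^{m+N/2}h^{1-m}+C(h|\log h|)^{1-m}$. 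For this I expand $\operatorname{Res}_g(v)=(v,-\Delta\tilde g+\tilde g)_{\Omega_h\setminus\Omega}+(v,\partial_{n_h}\tilde g)_{\Gamma_h}$ with $v=\mathcal I_h(\omega_j^2\theta)$, observe that only annuli meeting $\Gamma(\delta)$ enter, and bound the two inner products by the boundary-skin estimates \eref{eq: boundary-skin estimates}--\eref{eq: boundary-skin estimate 2}, the trace inequality, and the boundary-skin estimates for $g$ in the appendix, with the powers of $\delta=C_{0E}h^2$ supplying the needed smallness. I expect this last step to be the main obstacle: unlike in classical conforming theory $\operatorname{Res}_g\neq0$, and one must track how the weighted sum of these boundary-skin residuals interacts with the singular growth of $g$ in the delicate case where $x_0$ itself lies within $O(h)$ of $\Gamma$, so that $A_0$ already meets the boundary skin — this is precisely where the appendix estimates on $g$ and the extension stability of \pref{prop: stability of extension} do the decisive work; a secondary nuisance is keeping straight which expanded annulus ($A_j^{(1/2)}$, $A_j^{(3/4)}$, $A_j^{(1)}$) appears at each step so that \lref{lem: consistency of annuli and triangulation} and \cref{cor: support of interpolant} remain applicable.
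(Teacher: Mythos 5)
Your overall skeleton (cutoff on each annulus, interpolation/superapproximation, the asymptotic Galerkin orthogonality of \lref{lem: asymptotic Galerkin orthogonality}, absorption of the $h/d_j\le K^{-1}$ terms, and reduction to the weighted $L^2$-sum) is essentially the paper's argument, up to the cosmetic difference that you split $e=\rho+\theta$ and test with $\omega_j^2\theta$, whereas the paper works directly with $\omega_j(\tilde g-g_h)$ and uses $\nabla^2 g_h|_T=0$ instead of superapproximation. The genuine gap is the term you yourself flag and then leave unproved: the residual $R_j=\operatorname{Res}_g(\mathcal I_h(\omega_j^2\theta))$. It is not enough to invoke ``boundary-skin estimates plus trace inequality with powers of $\delta$'': the surface part $(v_h,\partial_{n_h}\tilde g)_{\Gamma_h}$ carries no factor of $\delta$ at all, and the crude bound $\|\partial_{n_h}\tilde g\|_{L^2(\Gamma_h\cap A_j^{(1/4)})}\le\|\nabla\tilde g\|_{L^2(\Gamma_h\cap A_j^{(1/4)})}\le Cd_j^{1/2-m-N/2}$ gives only $R_j\le Cd_j^{1/2-m-N/2}\|v_h\|_{H^1}$, so that after weighting $\sum_j d_j^{N/2}R_j$-type data terms are $O(\sum_j d_j^{1/2-m})=O(1)$ for $m=0$, which destroys the claimed bound $C(h|\log h|)^{1-m}$. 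The missing idea is the one the paper uses: since $\partial_n g=0$ on $\Gamma$, one writes $\partial_{n_h}\tilde g=\nabla\tilde g\cdot(n_h-n\circ\pi)+\bigl(\nabla\tilde g-(\nabla\tilde g)\circ\pi\bigr)\cdot n\circ\pi$ and gains the decisive factor $h$ from $\|n_h-n\circ\pi\|_{L^\infty(\Gamma_h)}\le Ch$ and the skin width $\delta=C_{0E}h^2$, yielding $|R_j|\le Chd_j^{1/2-m-N/2}\|v_h\|_{H^1(\Omega_h)}$; only then does the weighted sum come out as $Ch\sum_j d_j^{1/2-m}\le CK^{m+N/2}h^{1-m}+C(h|\log h|)^{1-m}$. (Incidentally, the ``delicate case'' you worry about, $x_0$ within $O(h)$ of $\Gamma$, is not an extra obstacle: $\operatorname{supp}\eta$ stays a distance $\ge Ch$ from $\Gamma(2\delta)$, and for $j=0$ one simply uses $\|g\|_{H^2(\Omega)}\le C\|\partial^m\eta\|_{L^2}$, exactly as in your treatment of $\rho$.)

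A secondary point to fix: you write the local estimate with $\varepsilon=\varepsilon(K)$ and a constant $C_\varepsilon$ in front of $d_j^{-1}\|\theta\|_{L^2(\Omega_h\cap A_j^{(1)})}$. If that constant really depended on $K$, the final statement would fail, since in \sref{sec6} the proposition is combined with \eref{eq: weighted L2 estimate}, whose gain is exactly a factor $K^{-1}$, and then $K$ is chosen large; so the coefficient of the weighted $L^2$-sum must be independent of $K$, as stated. This is repairable within your scheme: use $h/d_j\le K^{-1}$ only on the superapproximation term, and handle the $\rho$- and residual-couplings by Young's inequality with fixed (dimension-dependent) parameters, chosen small enough that, after converting $A_j^{(1)}$ to $A_j$ via \eref{eq: reduction of expanded annuli to usual ones}, the gradient sum can still be absorbed; the cost then lands only on the admissible data terms. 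With these two repairs your route goes through and is equivalent to the paper's proof.
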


The rest of this section is devoted to the proof of the proposition above.
In order to estimate $\|\tilde g - g_h\|_{H^1(\Omega_h \cap A_j)}$ for $j = 0, \dots, J$, we use a cut off function $\omega_j \in C_0^\infty(\mathbb R^N), \; \omega_j\ge0$ such that
\begin{equation} \label{eq: properties of omega}
	\omega_j \equiv1 \quad\text{in}\quad A_j, \qquad \operatorname{supp}\omega_j \subset A_j^{(1/4)}, \qquad \|\nabla^k \omega_j\|_{L^\infty(\mathbb R^N)} \le Cd_j^{-k} \quad (k=0,1,2).
\end{equation}
Then we find that
\begin{align*}
	\|\tilde g - g_h\|_{H^1(\Omega_h \cap A_j)}^2 &\le \big( \omega_j (\tilde g - g_h), \tilde g - g_h \big)_{\Omega_h} + \big( \omega_j \nabla(\tilde g - g_h), \nabla(\tilde g - g_h) \big)_{\Omega_h} \\
		&= a_h\big( \omega_j(\tilde g - g_h), \tilde g - g_h \big) - \big( \nabla\omega_j(\tilde g - g_h), \nabla(\tilde g - g_h) \big)_{\Omega_h} \\
		&= a_h\big( \omega_j(\tilde g - g_h) - v_h, \tilde g - g_h \big) - \big( (\nabla\omega_j) (\tilde g - g_h), \nabla(\tilde g - g_h) \big)_{\Omega_h} + \operatorname{Res}_g(v_h) \\
		&=: I_1 + I_2 + I_3,
\end{align*}
where $v_h \in V_h$ is arbitrary and we have used \lref{lem: asymptotic Galerkin orthogonality}.

Substituting $v_h = \mathcal I_h(\omega_j(\tilde g - g_h))$, where $\mathcal I_h$ is the interpolation operator given in \sref{sec2.1}, we estimate $I_1, I_2$, and $I_3$ in the following.

\begin{lem}
	$I_1$ is bounded as
	\begin{align}
		|I_1| &\le Chd_j^{-2} \|\tilde g - g_h\|_{L^2(\Omega_h \cap A_j^{(1/2)})} \|\tilde g - g_h\|_{H^1(\Omega_h \cap A_j^{(1/2)})} + Chd_j^{-1} \|\tilde g - g_h\|_{H^1(\Omega_h \cap A_j^{(1/2)})}^2 \notag \\
			&\hspace{1cm}	+ C_j h d_j^{-m-N/2} \|\tilde g - g_h\|_{H^1(\Omega_h \cap A_j^{(1/2)})} \label{eq: estimate of I1 in Sec4},
	\end{align}
	where $C_0 = C K^{m+N/2}$ and $C_j = C$ for $1\le j\le J$.
\end{lem}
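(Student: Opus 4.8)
The plan is to write $I_1 = a_h\big(w - \mathcal I_h w, \tilde g - g_h\big)$ with $w := \omega_j(\tilde g - g_h)$, exploit the localization of supports, and then reduce everything to an interpolation (``super\-approximation'') estimate for $w - \mathcal I_h w$. First, since $\operatorname{supp}\omega_j \subset A_j^{(1/4)}$ by \eref{eq: properties of omega}, we have $\operatorname{supp} w \subset A_j^{(1/4)}$, hence $\operatorname{supp}\mathcal I_h w \subset A_j^{(1/2)}$ by \cref{cor: support of interpolant} (this needs $K_0\ge 16$, so that \lref{lem: consistency of annuli and triangulation} applies with $d_0 = Kh$). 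Thus $w - \mathcal I_h w$ is supported in $\Omega_h\cap A_j^{(1/2)}$, and Cauchy--Schwarz gives
\[
	|I_1| \le \|w - \mathcal I_h w\|_{H^1(\Omega_h\cap A_j^{(1/2)})} \, \|\tilde g - g_h\|_{H^1(\Omega_h\cap A_j^{(1/2)})}.
\]
So it suffices to show $\|w - \mathcal I_h w\|_{H^1(\Omega_h\cap A_j^{(1/2)})} \le Ch\big(d_j^{-2}\|\tilde g - g_h\|_{L^2} + d_j^{-1}\|\tilde g - g_h\|_{H^1}\big) + C_j h d_j^{-m-N/2}$, the norms taken on $\Omega_h\cap A_j^{(1/2)}$, with $C_0 = CK^{m+N/2}$ and $C_j = C$ otherwise.

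To obtain this, split $w - \mathcal I_h w = \big(\omega_j\tilde g - \mathcal I_h(\omega_j\tilde g)\big) - \big(\omega_j g_h - \mathcal I_h(\omega_j g_h)\big)$ by linearity of $\mathcal I_h$ and treat the two pieces separately. For the smooth piece, apply the element-wise interpolation estimate from \sref{sec2.1} with $m=2$, $k=0,1$, together with the Leibniz rule $\nabla^2(\omega_j\tilde g) = (\nabla^2\omega_j)\tilde g + 2\,\nabla\omega_j\otimes\nabla\tilde g + \omega_j\nabla^2\tilde g$ and $\|\nabla^k\omega_j\|_{L^\infty}\le Cd_j^{-k}$; summing over the simplices whose macro-elements meet $A_j^{(1/4)}$ (all contained in $A_j^{(1/2)}$ by \lref{lem: consistency of annuli and triangulation}(i)) yields
\[
	\|\omega_j\tilde g - \mathcal I_h(\omega_j\tilde g)\|_{H^1(\Omega_h)} \le Ch\big(d_j^{-2}\|\tilde g\|_{L^2(\Omega_h\cap A_j^{(1/2)})} + d_j^{-1}\|\nabla\tilde g\|_{L^2(\Omega_h\cap A_j^{(1/2)})} + \|\nabla^2\tilde g\|_{L^2(\Omega_h\cap A_j^{(1/2)})}\big),
\]
and each of the three $\tilde g$-norms on the right is $\le C_j d_j^{-m-N/2}$ by the Green's-function bounds recorded in the appendix (\cref{cor: boundary-skin estimates for g} and its companions), with the enlarged constant $C_0 = CK^{m+N/2}$ in the innermost annulus $A_0$, where the singularity of $g$ sits and $\|\nabla^{2+m}g\|_{L^2}\lesssim\|\nabla^m\eta\|_{L^2}\lesssim h^{-m-N/2}$. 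For the finite-element piece $\omega_j g_h$, use super\-approximation: on each simplex $T$ the function $g_h$ is affine, so subtracting a single affine function from $g_h$ on the macro-element $M_T$ and applying the interpolation estimate gives
\[
	\|\omega_j g_h - \mathcal I_h(\omega_j g_h)\|_{H^1(T)} \le Ch_T\big(d_j^{-1}\|\nabla g_h\|_{L^2(M_T)} + d_j^{-2}\|g_h\|_{L^2(M_T)}\big);
\]
summing and then writing $g_h = \tilde g - (\tilde g - g_h)$ splits this into $\tilde g$-contributions (absorbed into $C_j h d_j^{-m-N/2}$ as above) and $(\tilde g - g_h)$-contributions $Ch\big(d_j^{-2}\|\tilde g - g_h\|_{L^2} + d_j^{-1}\|\tilde g - g_h\|_{H^1}\big)$ on $A_j^{(1/2)}$, the macro-elements staying inside $A_j^{(1/2)}$ by the same triangle-inequality argument as in \lref{lem: consistency of annuli and triangulation} (using $K\ge K_0\ge 16$).

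Inserting the resulting bound for $\|w - \mathcal I_h w\|_{H^1(\Omega_h\cap A_j^{(1/2)})}$ into the Cauchy--Schwarz inequality produces exactly \eref{eq: estimate of I1 in Sec4} with $C_0 = CK^{m+N/2}$ and $C_j = C$ for $1\le j\le J$. The main obstacle is the super\-approximation step for $\omega_j g_h$: one must genuinely extract a factor $h$ from the interpolation error of a product of a smooth cutoff with a merely piecewise-linear function, which forces one to exploit that $g_h$ is affine inside each element (so its ``second derivative'' on $T$ vanishes) and to keep careful track of the macro-element supports against the expanded annuli $A_j^{(s)}$; a secondary nuisance is converting the $\tilde g$-norms on $A_j$ into the clean powers $d_j^{-m-N/2}$, with the innermost annulus $A_0$ handled separately because that is where the regularized Green's function concentrates.
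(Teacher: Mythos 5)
Your opening steps are the paper's: take $v_h=\mathcal I_h(\omega_j(\tilde g-g_h))$, use \cref{cor: support of interpolant} and \lref{lem: consistency of annuli and triangulation} to localize supports in $A_j^{(1/2)}$, apply Cauchy--Schwarz, and reduce to an $H^1$ superapproximation bound for $\omega_j(\tilde g-g_h)-\mathcal I_h(\omega_j(\tilde g-g_h))$. The deviation --- and the genuine gap --- is your splitting of the product into the two pieces $\omega_j\tilde g-\mathcal I_h(\omega_j\tilde g)$ and $\omega_j g_h-\mathcal I_h(\omega_j g_h)$. The paper keeps the difference $\tilde g-g_h$ intact and uses only that $\nabla^2 g_h|_T=0$, so that elementwise $\nabla^2\big(\omega_j(\tilde g-g_h)\big)=(\nabla^2\omega_j)(\tilde g-g_h)+2\nabla\omega_j\otimes\nabla(\tilde g-g_h)+\omega_j\nabla^2\tilde g$; the two Leibniz terms keep the error $\tilde g-g_h$ and become exactly the first two terms of \eref{eq: estimate of I1 in Sec4}, and the only ``data'' term is $\|\nabla^2\tilde g\|_{L^2(\Omega_h\cap A_j^{(1/2)})}$, which is $\le C_j d_j^{-m-N/2}$ by \lref{lem: local Lp estimate of g} for $j\ge1$ and by $H^2$-regularity plus $\|\partial^m\eta\|_{L^2}\le Ch^{-m-N/2}$ for $j=0$. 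Your route instead requires the standalone bounds $d_j^{-2}\|\tilde g\|_{L^2(\Omega_h\cap A_j^{(1/2)})}+d_j^{-1}\|\nabla\tilde g\|_{L^2(\Omega_h\cap A_j^{(1/2)})}\le C_j d_j^{-m-N/2}$, and this fails in the case $N=2$, $m=0$: by the paper's own \lref{lem: local Linf estimate of g} one only has $\|g_0\|_{L^\infty(\Omega\cap A_j)}\le C(1+|\log d_j|)$, and the logarithm is genuinely present, so $d_j^{-2}\|\tilde g_0\|_{L^2(\Omega_h\cap A_j^{(1/2)})}\sim d_j^{-1}(1+|\log d_j|)$ exceeds the required $d_j^{-m-N/2}=d_j^{-1}$; likewise at $j=0$ one gets roughly $(Kh)^{-1}(1+|\log(Kh)|)$ instead of the needed $Ch^{-1}$. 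Since $C_j$ must be an absolute constant for $j\ge1$ (otherwise the sum $\sum_j C_j h d_j^{-m}(1+d_j^{1/2})$ in \sref{sec4} acquires an extra $|\log h|$ and the final $L^\infty$ estimate degrades to $O(h^2|\log h|^2)$ in 2D, the very case of the theorem and of the numerics), your argument as written does not prove the lemma. A further slip: the interior-annulus norms of $\tilde g$ are not what \cref{cor: boundary-skin estimates for g} records (that corollary concerns $L^1$ norms on $\Gamma(\delta)$, $\Gamma$, $\Gamma_h$); the relevant facts are Lemmas \ref{lem: local Linf estimate of g} and \ref{lem: local Lp estimate of g}.

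A secondary issue is your superapproximation treatment of $\omega_j g_h$: subtracting a single affine function on the macro-element $M_T$ does not help, because $g_h$ is only piecewise affine on $M_T$, so $\omega_j(g_h-\ell)$ is still not in $H^2(M_T)$ and the quoted interpolation estimate with $m=2$ does not apply on the patch. The paper sidesteps any separate estimate for $\omega_j g_h$ by the elementwise identity above (it only needs broken second derivatives, which vanish for $g_h$ on each $T$). So the fix is simply to abandon the splitting: estimate $\|\omega_j(\tilde g-g_h)-v_h\|_{H^1(\Omega_h)}$ through the elementwise Leibniz expansion with $\nabla^2 g_h|_T=0$, keep the error in the lower-order terms, and reserve the Green's-function bounds for the single term $\|\nabla^2\tilde g\|_{L^2(\Omega_h\cap A_j^{(1/2)})}$.
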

\begin{proof}
	By \cref{cor: support of interpolant} we have $\operatorname{supp}v_h \subset \Omega_h \cap A_j^{(1/2)}$, and hence
	\begin{align*}
		|I_1| \le \|\omega_j(\tilde g - g_h) - v_h\|_{H^1(\Omega_h)} \|\tilde g - g_h\|_{H^1(\Omega_h \cap A_j^{(1/2)})}.
	\end{align*}
	It follows from the interpolation error estimate, together with \eref{eq: properties of omega}, that
	\begin{align*}
		&\|\omega_j(\tilde g - g_h) - v_h\|_{H^1(\Omega_h)}^2 \le Ch^2 \sum_{T \in \mathcal T_h} \|\nabla^2 \big( \omega_j(\tilde g - g_h) \big)\|_{L^2(T)}^2 \\
		\le\; &Ch^2 \sum_{T\in\mathcal T_h} \Big( \|(\nabla^2\omega_j)(\tilde g - g_h)\|_{L^2(T)}^2 + \|(\nabla\omega_j) \otimes \nabla(\tilde g - g_h)\|_{L^2(T)}^2 + \|\nabla^2\tilde g\|_{L^2(T)}^2 \Big) \\
		\le\; &Ch^2 \sum_{T \cap A_j^{(1/4)} \neq \emptyset} \Big( d_j^{-4} \|\tilde g - g_h\|_{L^2(T)}^2 + d_j^{-2} \|\tilde g - g_h\|_{L^2(T)}^2 + \|\nabla^2\tilde g\|_{L^2(T)}^2 \Big),
	\end{align*}
	where we made use of $\nabla^2g_h|_{T} = 0$ for $T \in \mathcal T_h$.
	This combined with \lref{lem: consistency of annuli and triangulation}(i) implies
	\begin{equation*}
		\|\omega_j(\tilde g - g_h) - v_h\|_{H^1(\Omega_h)} \le Ch (d_j^{-2} \|\tilde g - g_h\|_{L^2(\Omega_h \cap A_j^{(1/2)})} + d_j^{-1} \|\tilde g - g_h\|_{L^2(\Omega_h \cap A_j^{(1/2)})} + \|\nabla^2\tilde g\|_{L^2(\Omega_h \cap A_j^{(1/2)})}).
	\end{equation*}
	When $j=0$, by the stability of extension and the $H^2$-regularity theory, we deduce that
	\begin{equation*}
		\|\nabla^2\tilde g\|_{L^2(\Omega_h\cap A_0^{(1/2)})} \le C\|g\|_{H^2(\Omega)} \le C\|\partial^m\eta\|_{L^2(\Omega)} \le Ch^{-m-N/2} = CK^{m+N/2} d_0^{-m-N/2}.
	\end{equation*}
	When $j \ge 1$, it follows from \lref{lem: local Lp estimate of g} that $\|\nabla^2\tilde g\|_{L^2(\Omega_h\cap A_0^{(1/2)})} \le Cd_j^{-m-N/2}$.
	Collecting the estimates above, we conclude \eref{eq: estimate of I1 in Sec4}.
\end{proof}

For $I_2$ we have
\begin{equation*}
	|I_2| \le Cd_j^{-1} \|\tilde g - g_h\|_{L^2(\Omega_h \cap A_j^{(1/2)})} \|\tilde g - g_h\|_{H^1(\Omega_h \cap A_j^{(1/2)})},
\end{equation*}
which dominates the first term in the right-hand side of \eref{eq: estimate of I1 in Sec4} because $hd_j^{-1} \le 1$.

\begin{lem}
	$|I_3| \le Ch d_j^{1/2-m-N/2} (\|\tilde g - g_h\|_{H^1(\Omega_h \cap A_j^{(1/4)})} + d_j^{-1} \|\tilde g - g_h\|_{L^2(\Omega_h \cap A_j^{(1/4)})})$.
\end{lem}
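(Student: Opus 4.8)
The quantity to control is $I_3=\operatorname{Res}_g(v_h)$ with $v_h=\mathcal I_h\big(\omega_j(\tilde g-g_h)\big)$, which by the definition of $\operatorname{Res}_g$ splits into a boundary-skin volume term and a boundary term,
\[
	I_3=(v_h,-\Delta\tilde g+\tilde g)_{\Omega_h\setminus\Omega}+(v_h,\partial_{n_h}\tilde g)_{\Gamma_h}=:R_1+R_2 .
\]
In each of $R_1,R_2$ I would apply the H\"older inequality to separate the discrete factor $v_h$ from the factor involving $\tilde g$. By \cref{cor: support of interpolant} one has $\operatorname{supp}v_h\subset\Omega_h\cap A_j^{(1/2)}$, and by the $H^1$-stability of $\mathcal I_h$ together with \eref{eq: properties of omega},
\[
	\|v_h\|_{H^1(\Omega_h)}\le C\|\omega_j(\tilde g-g_h)\|_{H^1(\Omega_h)}\le C\big(\|\tilde g-g_h\|_{H^1(\Omega_h\cap A_j^{(1/4)})}+d_j^{-1}\|\tilde g-g_h\|_{L^2(\Omega_h\cap A_j^{(1/4)})}\big),
\]
which is exactly the factor on the right-hand side of the claim (and likewise $\|v_h\|_{L^2(\Gamma_h)}$ via the trace inequality). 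Hence it suffices to extract a prefactor $Chd_j^{1/2-m-N/2}$ out of the $\tilde g$-side of $R_1$ and $R_2$, using along the way the boundary-skin estimates for $g$ collected in the appendix.

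For $R_1$ the discrete factor sits on the thin skin $\Omega_h\setminus\Omega\subset\Gamma(\delta)$, so \eref{eq: boundary-skin estimate 2} and the trace inequality give
\[
	\|v_h\|_{L^2(\Omega_h\setminus\Omega)}\le C\big(\delta^{1/2}\|v_h\|_{L^2(\Gamma_h)}+\delta\|\nabla v_h\|_{L^2(\Omega_h\setminus\Omega)}\big)\le C\delta^{1/2}\|v_h\|_{H^1(\Omega_h)},
\]
which already yields one power $\delta^{1/2}=C_{0E}^{1/2}h$. For the $\tilde g$-factor I would bound $\|-\Delta\tilde g+\tilde g\|_{L^2((\Omega_h\setminus\Omega)\cap A_j^{(1/2)})}\le C\|\tilde g\|_{W^{2,2}(\Gamma(\delta)\cap A_j^{(1/2)})}$, pass to $\Omega$ by \pref{prop: stability of extension}, and invoke \lref{lem: local Lp estimate of g} and \cref{cor: boundary-skin estimates for g}, which combine the interior decay $|\nabla^2 g|\le Cd_j^{-N-m}$ on $A_j^{(3/4)}$ (with $d_j$ replaced by $h$ when $j=0$) with the smallness $|\Omega\cap\Gamma(2\delta)\cap A_j^{(3/4)}|\le C\delta\,d_j^{N-1}$ of the skin, to give $\le C\delta^{1/2}d_j^{-m-(N+1)/2}$. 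Multiplying, $|R_1|\le C\delta\,d_j^{-m-(N+1)/2}\|v_h\|_{H^1(\Omega_h)}=C\,(h/d_j)\,hd_j^{1/2-m-N/2}\|v_h\|_{H^1(\Omega_h)}$ since $\delta=C_{0E}h^2$, and $h/d_j\le 1/K\le1$ absorbs the excess.

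For $R_2$ the decisive point is that $\partial_n g=0$ on $\Gamma$, so $(\nabla g)\circ\pi\cdot(n\circ\pi)=0$ on $\Gamma_h$, while $\nabla\tilde g=\nabla g$ on $\Gamma$ by the construction of $P$; hence on $\Gamma_h$
\[
	\partial_{n_h}\tilde g=\nabla\tilde g\cdot(n_h-n\circ\pi)+\big(\nabla\tilde g-(\nabla\tilde g)\circ\pi\big)\cdot(n\circ\pi),
\]
exactly as in the proofs in \sref{sec3}. The second summand is handled by \eref{eq: boundary-skin estimates}$_3$ together with the skin $L^2$-bound for $\nabla^2\tilde g$ used above, giving $\le C\delta\,d_j^{-m-(N+1)/2}\le Chd_j^{1/2-m-N/2}$; the first by $\|n_h-n\circ\pi\|_{L^\infty(\Gamma_h)}\le Ch$ (see \sref{sec2.2}), the surface change of variables \eref{eq: boundary-skin estimates}$_1$, and the boundary estimate $\|\nabla g\|_{L^2(\Gamma\cap A_j^{(3/4)})}\le Cd_j^{1/2-m-N/2}$ from the appendix, giving $\le Chd_j^{1/2-m-N/2}$ as well. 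Pairing with $\|v_h\|_{L^2(\Gamma_h)}\le C\|v_h\|_{L^2(\Omega_h)}^{1/2}\|v_h\|_{H^1(\Omega_h)}^{1/2}\le C\big(\|\tilde g-g_h\|_{H^1(\Omega_h\cap A_j^{(1/4)})}+d_j^{-1}\|\tilde g-g_h\|_{L^2(\Omega_h\cap A_j^{(1/4)})}\big)$ and adding $R_1$ gives the assertion.

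The crux is the careful bookkeeping of the boundary-skin layer: in $R_1$ the full gain $\delta=C_{0E}h^2$ must be harvested from two independent sources — a $\delta^{1/2}$ from restricting $v_h$ to the skin by \eref{eq: boundary-skin estimate 2}, and a $\delta^{1/2}$ from the skin $L^2$-norm of $\nabla^2\tilde g$ — and in $R_2$ one must genuinely exploit $\partial_n g=0$ on $\Gamma$ rather than estimate $\partial_{n_h}\tilde g$ directly. The sensitive case is the innermost annulus $j=0$, where $d_0=Kh$ is only comparable to $h$ and the interior decay of $g$ cannot be localized below the scale $h$; as for the term $I_1$ above, the $j=0$ contribution then carries a constant depending on $K$, which is harmless since it is multiplied by a negative power of $d_0$ in the assembly of \pref{prop: weighted H1 estimate}.
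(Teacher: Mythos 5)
Your argument is essentially the paper's proof: the same splitting of $\operatorname{Res}_g(v_h)$ into the skin term and the boundary term, the same harvesting of $\delta^{1/2}\cdot\delta^{1/2}$ in the former via \eref{eq: boundary-skin estimate 2} and a localized bound on $\nabla^2\tilde g$, the same decomposition $\partial_{n_h}\tilde g=\nabla\tilde g\cdot(n_h-n\circ\pi)+\big(\nabla\tilde g-(\nabla\tilde g)\circ\pi\big)\cdot n\circ\pi$ exploiting $\partial_n g=0$ on $\Gamma$, and the same conclusion via the trace inequality and the $H^1$-stability of $\mathcal I_h$ applied to $\omega_j(\tilde g-g_h)$. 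The only cosmetic differences are that the paper bounds $\|\nabla\tilde g\|_{L^2(\Gamma_h\cap A_j^{(1/4)})}$ directly by \lref{lem: local Lp estimate of g in boundary-skins} instead of transferring to $\Gamma$ by \eref{eq: boundary-skin estimates}$_1$, and it records no $K$-dependence of the constant for the innermost annulus in this lemma.
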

\begin{proof}
	Since $I_3 = (v_h, -\Delta\tilde g + \tilde g)_{\Omega_h\setminus\Omega} + (v_h, \partial_{n_h}\tilde g)_{\Gamma_h}$, we observe that
	\begin{align*}
		|(v_h, -\Delta\tilde g + \tilde g)_{\Omega_h\setminus\Omega}| &\le C\delta^{1/2} \|v_h\|_{H^1(\Omega_h)} (\delta d_j^{N-1})^{1/2} d_j^{-m-N} \le C\delta d_j^{-1/2-m-N/2} \|v_h\|_{H^1(\Omega_h)},
	\end{align*}
	and that
	\begin{align*}
		|(v_h, \partial_{n_h}\tilde g)_{\Gamma_h}| &\le \|v_h\|_{L^2(\Gamma_h)} \|\partial_{n_h} \tilde g\|_{L^2(\Gamma_h \cap A_j^{(1/4)})} \\
			&\le C\|v_h\|_{H^1(\Omega_h)} \big( \|\nabla\tilde g \cdot (n_h - n\circ\pi)\|_{L^2(\Gamma_h \cap A_j^{(1/4)})} + \|(\nabla\tilde g - (\nabla\tilde g)\circ\pi) \cdot n\circ\pi\|_{L^2(\Gamma_h \cap A_j^{(1/4)})} \big) \\
			&\le C\|v_h\|_{H^1(\Omega_h)} \big( h\|\nabla\tilde g\|_{L^2(\Gamma_h \cap A_j^{(1/4)})} + |\Gamma_h \cap A_j^{(1/4)}|^{1/2} \delta \|\nabla^2\tilde g\|_{L^\infty(\Gamma_h \cap A_j^{(1/4)})} \big) \\
			&\le C\|v_h\|_{H^1(\Omega_h)} (hd_j^{1/2 - m - N/2} + h^2d_j^{-1/2-m-N/2}) \le Ch d_j^{1/2-m-N/2} \|v_h\|_{H^1(\Omega_h)}.
	\end{align*}
	Therefore, by the $H^1$-stability of $\mathcal I_h$ and by $d_j \le 2\operatorname{diam}\Omega$,
	\begin{align*}
		|I_3| &\le Ch d_j^{1/2-m-N/2} \|\omega_j(\tilde g - g_h)\|_{H^1(\Omega_h)} \\
			&\le Ch d_j^{1/2-m-N/2} (\|\tilde g - g_h\|_{H^1(\Omega_h \cap A_j^{(1/4)})} + d_j^{-1} \|\tilde g - g_h\|_{L^2(\Omega_h \cap A_j^{(1/4)})}),
	\end{align*}
	which completes the proof.
\end{proof}

Collecting the estimates for $I_1, I_2$, and $I_3$ we deduce that
\begin{align*}
	d_j^{N/2} \|\tilde g - g_h\|_{H^1(\Omega_h \cap A_j)} &\le C(hd_j^{-1})^{1/2} d_j^{N/2} \|\tilde g - g_h\|_{H^1(\Omega_h \cap A_j^{(1)})} \\
	&\quad + C (d_j^{N/2}\|\tilde g - g_h\|_{H^1(\Omega_h \cap A_j^{(1)})})^{1/2} ( d_j^{-1+N/2}\|\tilde g - g_h\|_{L^2(\Omega_h \cap A_j^{(1)})} )^{1/2} \\
	&\quad + \big( C_j hd_j^{-m} (1 + d_j^{1/2}) \big)^{1/2} (d_j^{N/2}\|\tilde g - g_h\|_{H^1(\Omega_h \cap A_j^{(1)})})^{1/2} \\
	&\quad + C (hd_j^{1/2-m})^{1/2} \big( d_j^{-1+N/2}\|\tilde g - g_h\|_{L^2(\Omega_h \cap A_j^{(1)})})^{1/2}.
\end{align*}
We now take the summation for $j = 0, 1, \dots, J$ and apply \eref{eq: reduction of expanded annuli to usual ones} to have
\begin{align*}
	\sum_{j=0}^J d_j^{N/2} \|\tilde g - g_h\|_{H^1(\Omega_h \cap A_j)} &\le C'(hd_0^{-1})^{1/2} \sum_{j=0}^J d_j^{N/2} \|\tilde g - g_h\|_{H^1(\Omega_h\cap A_j)} + \frac14 \sum_{j=0}^J d_j^{N/2} \|\tilde g - g_h\|_{H^1(\Omega_h\cap A_j)} \\
		&\hspace{-5mm} + \sum_{j=0}^J C_jh d_j^{-m}(1 + d_j^{1/2}) + Ch\sum_{j=0}^J d_j^{1/2-m} + C \sum_{j=0}^J d_j^{-1+N/2} \|\tilde g - g_h\|_{L^2(\Omega_h \cap A_j)}.
\end{align*}
If $hd_0^{-1} = K^{-1} \le 1/(4C')^2$, then one can absorb the first two terms into the left-hand side to conclude \eref{eq: weighted H1 estimate}.
This completes the proof of \pref{prop: weighted H1 estimate}.

Thus we are left to deal with $\sum_{j=0}^J d_j^{-1+N/2} \|\tilde g - g_h\|_{L^2(\Omega_h \cap A_j)}$, which will be the scope of the next section.

\section{Weighted $L^2$-estimates} \label{sec5}
Let us give estimation of the weighted $L^2$-norm appearing in the last term of \eref{eq: weighted H1 estimate}.
\begin{prop} \label{prop: weighted L2 estimate}
	There exists $K_0>0$ such that, for any dyadic decomposition $\mathcal A_\Omega(x_0, d_0) = \{\Omega \cap A_j\}_{j=0}^J$ of $\Omega$ with $d_0 = Kh$, $K_0 \le K \le h^{-1}$, we obtain
	\begin{align}
		&\sum_{j=0}^J d_j^{-1+N/2} \|\tilde g - g_h\|_{L^2(\Omega_h \cap A_j)} \notag \\
		\le\; & C(hd_0^{-1}) \Big( \sum_{j=0}^J d_j^{N/2} \|\tilde g - g_h\|_{H^1(\Omega_h\cap A_j)} + \|\tilde g - g_h\|_{W^{1,1}(\Omega_h)} \Big) + Ch^{3/2 - m}, \label{eq: weighted L2 estimate}
	\end{align}
	where the constants $K_0$ and $C$ are independent of $h, x_0, \partial$, and $K$.
\end{prop}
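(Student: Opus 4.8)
The plan is to run a duality (Aubin--Nitsche) argument localized to the dyadic decomposition $\mathcal A_\Omega(x_0, d_0)$. Write $e := \tilde g - g_h$. For each $j$ I would choose $\phi_j \in L^2(\Omega_h\cap A_j)$ with $\operatorname{supp}\phi_j \subset \Omega_h\cap A_j$, $\|\phi_j\|_{L^2(\Omega_h)} = d_j^{-1+N/2}$, and $(e,\phi_j)_{\Omega_h} = d_j^{-1+N/2}\|e\|_{L^2(\Omega_h\cap A_j)}$. Since the $A_j$ are pairwise disjoint, $\phi := \sum_{j=0}^J \phi_j$ then satisfies $\sum_{j=0}^J d_j^{-1+N/2}\|e\|_{L^2(\Omega_h\cap A_j)} = (e,\phi)_{\Omega_h}$ and $\|\phi\|_{L^2(\Omega_h)}^2 = \sum_{j=0}^J d_j^{-2+N}$, the latter being controlled via \eref{eq: geometric sequence}. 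Let $w \in H^2(\Omega)$ solve the dual Neumann problem $-\Delta w + w = \phi$ in $\Omega$ with $\partial_n w = 0$ on $\Gamma$; set $\tilde w := Pw$ (so that $\tilde w = w$ on $\Omega$, and \pref{prop: stability of extension} applies in $\Gamma(\delta)$), and consider $\mathcal I_h\tilde w \in V_h$.

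Next I would establish an asymptotic Galerkin orthogonality relation for $w$ in the spirit of \lref{lem: asymptotic Galerkin orthogonality}: integrating by parts on $\Omega_h$ and using $-\Delta w + w = \phi$ on $\Omega_h\cap\Omega$ gives $(e,\phi)_{\Omega_h} = a_h(e,\tilde w) - \operatorname{Res}_w(e) + (e,\phi)_{\Omega_h\setminus\Omega}$, where $\operatorname{Res}_w(v) := (v,-\Delta\tilde w+\tilde w)_{\Omega_h\setminus\Omega} + (v,\partial_{n_h}\tilde w)_{\Gamma_h}$ is the analogue of $\operatorname{Res}_g$, and then, applying $a_h(v_h,\tilde g - g_h) = \operatorname{Res}_g(v_h)$ with $v_h = \mathcal I_h\tilde w$, $(e,\phi)_{\Omega_h} = J_1 + J_2 + J_3$ with $J_1 := a_h(e,\tilde w - \mathcal I_h\tilde w)$, $J_2 := \operatorname{Res}_g(\mathcal I_h\tilde w)$, and $J_3 := (e,\phi)_{\Omega_h\setminus\Omega} - \operatorname{Res}_w(e)$. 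The weights are built into $\phi$, so it remains to bound $J_1, J_2, J_3$.

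For $J_1$ I would localize using the disjointness of the $A_j$ and \lref{lem: consistency of annuli and triangulation}, together with the interpolation error estimate and \pref{prop: stability of extension}: $|J_1| \le C\sum_j \|e\|_{H^1(\Omega_h\cap A_j)}\|\tilde w - \mathcal I_h\tilde w\|_{H^1(\Omega_h\cap A_j^{(1)})} \le Ch\sum_j \|e\|_{H^1(\Omega_h\cap A_j)}\|w\|_{H^2(\Omega\cap A_j^{(2)})}$. The crucial ingredient is then a weighted a priori bound for the dual solution, $\sum_j d_j^{-N}\|w\|_{H^2(\Omega\cap A_j^{(2)})}^2 \le Cd_0^{-2}$ (equivalently, $w$ inherits on each $A_j$, up to constants and summable tails from the other annuli, the local scale $d_j^{-1+N/2}$ of $\phi$, the innermost annulus $A_0$ being handled separately by the $H^2$-regularity bound $\|w\|_{H^2(\Omega)} \le C\|\phi\|_{L^2(\Omega)}$); granting it, Cauchy--Schwarz in $j$ and $\sum_j d_j^N\|e\|_{H^1(\Omega_h\cap A_j)}^2 \le \big(\sum_j d_j^{N/2}\|e\|_{H^1(\Omega_h\cap A_j)}\big)^2$ give $|J_1| \le Chd_0^{-1}\sum_j d_j^{N/2}\|e\|_{H^1(\Omega_h\cap A_j)}$. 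For $J_2$, since $g = g_m$ solves $-\Delta g + g = \partial^m\eta$ with $\operatorname{supp}\eta\cap\Gamma(2\delta)=\emptyset$, $\operatorname{Res}_g(\mathcal I_h\tilde w)$ only feels the non-PDE part of $\tilde g = Pg$ on the boundary skin; combining the $H^1$-stability of $\mathcal I_h$, the boundary-skin estimates \eref{eq: boundary-skin estimates} and \eref{eq: boundary-skin estimate 2}, the bound $\|n\circ\pi - n_h\|_{L^\infty(\Gamma_h)}\le Ch$, the local estimates for $g$ collected in the appendix, and \eref{eq: geometric sequence} for the dyadic sum, careful bookkeeping of the powers of $\delta = C_{0E}h^2$ yields $|J_2|\le Ch^{3/2-m}$. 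For $J_3$, I would apply \eref{eq: boundary-skin estimate 2} and the trace inequality to $e$ on $\Omega_h\setminus\Omega$ and $\Gamma_h$, and the stability estimate for $w$ to the $\tilde w$-factors; since the hypothesis $K\le h^{-1}$ forces $h^2 \le hd_0^{-1}$, all of $J_3$ is absorbed into $Chd_0^{-1}\big(\sum_j d_j^{N/2}\|e\|_{H^1(\Omega_h\cap A_j)} + \|e\|_{W^{1,1}(\Omega_h)}\big)$, exactly the form allowed by \eref{eq: weighted L2 estimate}. Choosing $K_0$ large enough that the thin-skin hypotheses hold then finishes the proof.

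The step I expect to be the main obstacle is the weighted a priori estimate for the dual solution $w$ adapted to the dyadic decomposition. In contrast with the smooth Green's functions $g$, the dual solution has only $H^2$-regularity (its data $\phi$ being merely $L^2$), so the scaling and decay of $\|w\|_{H^2(\Omega\cap A_j^{(2)})}$ in $d_j$ cannot be obtained pointwise; one has to combine interior Caccioppoli-type estimates with Green's-function decay and the global bound $\|w\|_{H^2(\Omega)}\le C\|\phi\|_{L^2(\Omega)}$, treating the innermost annulus $A_0$ by itself, in the same spirit as the local estimates for $g$ in the appendix. A secondary, more computational difficulty is pinning down the exact power $h^{3/2-m}$ in $J_2$ (and the corresponding skin contributions in $J_3$), for which the interplay of $\delta = C_{0E}h^2$, the trace inequality on $\Gamma_h$, the $O(h)$ normal-vector error, and the $d_j^{-m}$-type growth of the derivatives of $g$ has to be tracked and summed over the annuli with the correct exponents.
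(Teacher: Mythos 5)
Your algebraic skeleton is sound and is in fact the same duality argument the paper uses (your $J_2=\operatorname{Res}_g(\mathcal I_h\tilde w)$ is exactly the paper's $I_3+I_4$), but the way you aggregate the annuli creates two genuine gaps. The paper fixes one annulus at a time, takes a unit-norm $\varphi\in C_0^\infty(\Omega_h\cap A_j)$, and then exploits two facts about the corresponding dual solution: the global bound $\|w\|_{H^2(\Omega)}\le C$ and, crucially, the pointwise decay $\|\tilde w\|_{W^{k,\infty}(\tilde\Omega\setminus A_j^{(1/2)})}\le Cd_j^{2-k-N/2}$ of \lref{lem: estimate for w}; the sum over $j$ is taken only after the per-annulus bound \eref{eq1: Sec5}. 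By bundling the data into $\phi=\sum_j\phi_j$ you shift all of this onto the weighted a priori estimate $\sum_j d_j^{-N}\|w\|^2_{H^2(\Omega\cap A_j^{(2)})}\le Cd_0^{-2}$, which you do not prove, and the shortcut you offer for the innermost annulus is quantitatively insufficient: since $\|\phi\|^2_{L^2(\Omega)}=\sum_j d_j^{N-2}$ is $O(1)$ for $N\ge3$ and $O(|\log h|)$ for $N=2$, the global bound $\|w\|_{H^2(\Omega)}\le C\|\phi\|_{L^2(\Omega)}$ only gives $d_0^{-N}\|w\|^2_{H^2(\Omega\cap A_0^{(2)})}\le Cd_0^{-N}$ (resp.\ $Cd_0^{-2}|\log h|$), which overshoots the claimed $Cd_0^{-2}$ by $d_0^{2-N}$ (resp.\ $|\log h|$); any such loss in the coefficient of $\sum_j d_j^{N/2}\|\tilde g-g_h\|_{H^1(\Omega_h\cap A_j)}$ would break the absorption in \sref{sec6}. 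What is really needed is $\|w\|_{H^2(\Omega\cap A_j^{(2)})}\le Cd_j^{N/2-1}$ for every $j$ including $j=0$, and proving it requires a near-field/far-field splitting of the data together with the Krasovskii decay of $\nabla_x^2G$, i.e.\ essentially a weighted, summed version of \lref{lem: estimate for w}. (Also, $A_j^{(2)}$ lies outside the range $s\in[0,1]$ for which the paper defines expanded annuli; minor.)

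The second gap is in $J_2$ and $J_3$. With the aggregated data, $w$ has no smallness away from any single annulus, so the global tools you list ($H^1$-stability of $\mathcal I_h$, the boundary-skin estimates, the appendix bounds for $g$, \eref{eq: geometric sequence}) cannot produce $|J_2|\le Ch^{3/2-m}$: for instance, estimating $(\mathcal I_h\tilde w,\partial_{n_h}\tilde g)_{\Gamma_h}$ by Cauchy--Schwarz on all of $\Gamma_h$ gives at best $Ch\|\nabla\tilde g\|_{L^2(\Gamma_h)}\|\mathcal I_h\tilde w\|_{L^2(\Gamma_h)}\sim Ch\,d_0^{(1-2m-N)/2}$, which for $m=0$, $N=2$ is of order $K^{-1/2}h^{1/2}$, a full power of $h$ short of $h^{3/2}$. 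To recover the correct rate you must decompose $w=\sum_i w^{(i)}$ by linearity and redo, for each $i$, the near-$A_i$/far-from-$A_i$ analysis of the paper's $I_3$ and $I_4$ using \lref{lem: estimate for w} --- at which point the aggregation has bought nothing. Likewise for $J_3$: localizing $\tilde g-g_h$ on the skin near $A_j$ via \eref{eq: boundary-skin estimate 2} and the trace inequality inevitably produces a term of the form $Ch^{1/2}d_j^{-1+N/2}\|\tilde g-g_h\|_{L^2(\Omega_h\cap A_j^{(1)})}$ (cf.\ \eref{eq1: lem5.3}), which is neither of the two shapes allowed on the right of \eref{eq: weighted L2 estimate} and is not disposed of by the observation $h^2\le hd_0^{-1}$; it must be summed and absorbed into the left-hand side for small $h$, as the paper does after \eref{eq1: Sec5}, a step absent from your argument. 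So while the identity $(e,\phi)_{\Omega_h}=J_1+J_2+J_3$ and the $J_1$ estimate (granting the weighted dual bound) are fine, the two quantitative claims on which the proof hinges are currently unsupported, and the shortcuts proposed for them do not work as stated.
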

To prove this, first we fix $j=0, \dots, J$ and estimate $\|\tilde g - g_h\|_{L^2(\Omega_h \cap A_j)}$ based on a localized version of the Aubin--Nitsche trick.
In fact, since
\begin{equation*}
	\|\tilde g - g_h\|_{L^2(\Omega_h \cap A_j)} = \sup_{\substack{\varphi \in C_0^\infty(\Omega_h \cap A_j) \\ \|\varphi\|_{L^2(\Omega_h \cap A_j)} = 1}} (\varphi, \tilde g - g_h)_{\Omega_h},
\end{equation*}
it suffices to examine $(\varphi, \tilde g - g_h)_{\Omega_h}$ for such $\varphi$.
To express this quantity with a solution of a dual problem, we consider
\begin{equation} \label{eq: PDE for w}
	-\Delta w + w = \varphi \quad\text{in}\quad \Omega, \qquad \partial_n w = 0 \quad\text{on}\quad \Gamma,
\end{equation}
where $\varphi$ is extended by 0 to the outside of $\Omega_h \cap A_j$.
From the elliptic regularity theory we know that the solution $w$ is smooth enough.
We then obtain the following:
\begin{lem}
	For all $w_h \in V_h$ we have
	\begin{align}
		(\varphi, \tilde g - g_h)_{\Omega_h} &= a_h(\tilde w - w_h, \tilde g - g_h) - \operatorname{Res}_w(\tilde g - g_h) - \operatorname{Res}_g(\tilde w - w_h) + \operatorname{Res}_g(\tilde w) \notag \\
			&=: I_1 + I_2 + I_3 + I_4, \label{eq: representation of L2 error of g-gh}
	\end{align}
	where $\tilde w := Pw$ and $\operatorname{Res}_w : H^1(\Omega_h) \to \mathbb R$ is given by
	\begin{equation*}
		\operatorname{Res}_w(v) := (-\Delta\tilde w + \tilde w - \varphi, v)_{\Omega_h\setminus\Omega} + (\partial_{n_h}\tilde w, v)_{\Gamma_h}.
	\end{equation*}
\end{lem}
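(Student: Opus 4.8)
The plan is to follow the derivation of the third identity in \lref{lem: asymptotic Galerkin orthogonality}, now with $w$ playing the role that $u$ played there, with $\varphi$ in the role of the source $\partial^m\eta$, and with $g$ and its Galerkin approximation $g_h$ keeping their roles. The only orthogonality relation I will invoke is the one already established, namely $a_h(v_h, \tilde g - g_h) = \operatorname{Res}_g(v_h)$ for every $v_h \in V_h$, which holds because $g_h$ is the finite element solution for $g$; this is exactly why the asserted identity is allowed to hold for an \emph{arbitrary} $w_h \in V_h$, with no equation imposed on $w_h$.

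Concretely, I would first write $a_h(\tilde w - w_h, \tilde g - g_h) = a_h(\tilde w, \tilde g - g_h) - a_h(w_h, \tilde g - g_h)$ and replace the last term by $\operatorname{Res}_g(w_h)$ via the orthogonality above. Next I would integrate by parts on $\Omega_h$ in the remaining term: since $\tilde w = Pw \in H^2(\tilde\Omega)$ by the elliptic regularity for \eref{eq: PDE for w} together with \pref{prop: stability of extension}, and $\tilde g - g_h \in H^1(\Omega_h)$, one has
\begin{equation*}
	a_h(\tilde w, \tilde g - g_h) = (-\Delta\tilde w + \tilde w, \tilde g - g_h)_{\Omega_h} + (\partial_{n_h}\tilde w, \tilde g - g_h)_{\Gamma_h}.
\end{equation*}
Then I would split the volume integral along $\Omega_h = (\Omega_h\cap\Omega)\cup(\Omega_h\setminus\Omega)$: on $\Omega_h\cap\Omega$ we have $-\Delta\tilde w + \tilde w = \varphi$ because $\tilde w$ agrees with $w$ there and $w$ solves \eref{eq: PDE for w}, while on $\Omega_h\setminus\Omega$ the leftover volume term and the boundary term on $\Gamma_h$ combine, by the very definition of $\operatorname{Res}_w$, into $\operatorname{Res}_w(\tilde g - g_h) + (\varphi, \tilde g - g_h)_{\Omega_h\setminus\Omega}$. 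Reassembling the two halves of $(\varphi, \tilde g - g_h)$ gives $a_h(\tilde w, \tilde g - g_h) = (\varphi, \tilde g - g_h)_{\Omega_h} + \operatorname{Res}_w(\tilde g - g_h)$, hence $(\varphi, \tilde g - g_h)_{\Omega_h} = a_h(\tilde w - w_h, \tilde g - g_h) - \operatorname{Res}_w(\tilde g - g_h) + \operatorname{Res}_g(w_h)$; splitting $\operatorname{Res}_g(w_h) = -\operatorname{Res}_g(\tilde w - w_h) + \operatorname{Res}_g(\tilde w)$ by linearity then yields exactly \eref{eq: representation of L2 error of g-gh}.

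I do not expect a genuine obstacle here: once the integration by parts is justified the argument is purely algebraic. The only points requiring a little care are the bookkeeping of signs and of which set ($\Omega_h\cap\Omega$, $\Omega_h\setminus\Omega$, or $\Gamma_h$) carries each term, and the observation that $\varphi$, having been extended by zero, must still be integrated over $\Omega_h\setminus\Omega$ so that the two halves of $(\varphi, \tilde g - g_h)_{\Omega_h}$ recombine correctly — a benign manifestation of the domain perturbation $\Omega_h\neq\Omega$.
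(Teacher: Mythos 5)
Your proposal is correct and follows essentially the same route as the paper's proof: the same ingredients (the relation $a_h(w_h,\tilde g-g_h)=\operatorname{Res}_g(w_h)$ from Lemma \ref{lem: asymptotic Galerkin orthogonality}, integration by parts of $a_h(\tilde w,\tilde g-g_h)$ over $\Omega_h$, the equation $-\Delta\tilde w+\tilde w=\varphi$ on $\Omega_h\cap\Omega$, and the definition of $\operatorname{Res}_w$), merely organized in the reverse direction, starting from $a_h(\tilde w-w_h,\tilde g-g_h)$ instead of from $(\varphi,\tilde g-g_h)_{\Omega_h}$. The bookkeeping of the $\varphi$-term on $\Omega_h\setminus\Omega$ and the final splitting $\operatorname{Res}_g(w_h)=\operatorname{Res}_g(\tilde w)-\operatorname{Res}_g(\tilde w-w_h)$ match the paper's argument.
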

\begin{proof}
	We see that
	\begin{align*}
		(\varphi, \tilde g - g_h)_{\Omega_h} &= (\varphi, g - g_h)_{\Omega_h\cap\Omega} + (\varphi, \tilde g - g_h)_{\Omega_h\setminus\Omega} \\
			&= (-\Delta\tilde w + \tilde w, \tilde g - g_h)_{\Omega_h} + (\Delta\tilde w - \tilde w + \varphi, \tilde g - g_h)_{\Omega_h\setminus\Omega} \\
			&= a_h(\tilde w, \tilde g - g_h) - (\partial_{n_h}\tilde w, \tilde g - g_h)_{\Gamma_h} + (\Delta\tilde w - \tilde w + \varphi, \tilde g - g_h)_{\Omega_h\setminus\Omega} \\
			&= a_h(\tilde w - w_h, \tilde g - g_h) + \operatorname{Res}_g(w_h) - \operatorname{Res}_w(g - g_h),
	\end{align*}
	where we have used $a_h(w_h, \tilde g - g_h) = \operatorname{Res}_g(w_h)$ from \lref{lem: asymptotic Galerkin orthogonality}.
	This yields the desired equality.
\end{proof}
\begin{rem}
	In a similar way to \lref{lem: another representation of Res}, one can derive another expression for $\operatorname{Res}_g(v)$ if $v \in H^1(\tilde\Omega)$:
	\begin{equation*}
		\operatorname{Res}_g(v) = a_{\Omega_h\triangle\Omega}'(v, \tilde g).
	\end{equation*}
\end{rem}

In the following four lemmas, taking $w_h = \mathcal I_h\tilde w$, we estimate $I_1, I_2, I_3$, and $I_4$ by dividing the integrals over $\Omega_h$, $\Gamma_h$, or boundary-skin layers, into those defined near $A_j$ and away from $A_j$.
The former will be bounded, e.g., by the H\"older inequality of the form $\|\phi\|_{L^2(\Omega_h)} \|\psi\|_{L^2(\Omega_h \cap A_j^{(1/2)})}$ together with $H^2$-regularity estimates for $w$, whereas the latter will be bounded by $\|\phi\|_{L^\infty(\Omega_h \setminus A_j^{(1/2)})} \|\psi\|_{L^1(\Omega_h)}$ together with Green's function estimates for $w$ (see \lref{lem: estimate for w}).

\begin{lem} \label{lem: estimate of I1 in Sec5}
	$|I_1| \le Ch \|\tilde g - g_h\|_{H^1(\Omega_h \cap A_j^{(1/2)})} + Chd_j^{-N/2} \|\tilde g - g_h\|_{W^{1,1}(\Omega_h)}.$
\end{lem}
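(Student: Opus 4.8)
The plan is to split the integral defining $I_1 = a_h(\tilde w - w_h, \tilde g - g_h)$ according to whether the integration variable lies near $A_j$ (inside the expanded annulus $A_j^{(1/2)}$) or away from it. On the near part, I would use the H\"older inequality in the form
\[
|a_h(\tilde w - w_h, \tilde g - g_h)|_{\Omega_h \cap A_j^{(1/2)}} \le \|\tilde w - w_h\|_{H^1(\Omega_h \cap A_j^{(1/2)})} \, \|\tilde g - g_h\|_{H^1(\Omega_h \cap A_j^{(1/2)})},
\]
and then bound $\|\tilde w - w_h\|_{H^1(\Omega_h \cap A_j^{(1/2)})} \le Ch \|w\|_{H^2(\Omega)} \le Ch\|\varphi\|_{L^2(\Omega_h \cap A_j)} = Ch$ by the interpolation error estimate of \sref{sec2.1} together with the $H^2$-regularity theory for \eref{eq: PDE for w} and the normalization $\|\varphi\|_{L^2}=1$. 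This produces the first term $Ch\|\tilde g - g_h\|_{H^1(\Omega_h \cap A_j^{(1/2)})}$.

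For the far part, i.e.\ the integral over $\Omega_h \setminus A_j^{(1/2)}$, I would instead estimate by
\[
|a_h(\tilde w - w_h, \tilde g - g_h)|_{\Omega_h \setminus A_j^{(1/2)}} \le \|\tilde w - w_h\|_{W^{1,\infty}(\Omega_h \setminus A_j^{(1/2)})} \, \|\tilde g - g_h\|_{W^{1,1}(\Omega_h)},
\]
so the task reduces to showing $\|\tilde w - w_h\|_{W^{1,\infty}(\Omega_h \setminus A_j^{(1/2)})} \le Ch d_j^{-N/2}$. Here $\varphi$ is supported in $A_j$, so away from $A_j$ the dual solution $w$ behaves like a (derivative of a) Green's kernel; the relevant pointwise decay bounds are precisely those recorded in \lref{lem: estimate for w}, which give $\|w\|_{W^{2,\infty}}$-type control localized away from the source with the right powers of $d_j$. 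Combining such a bound with the local $W^{1,\infty}$-interpolation error estimate for $\mathcal I_h$ yields the factor $h$ times the appropriate negative power of $d_j$; matching the scaling (the $L^2 \to L^\infty$ loss over the source ball $A_j$ of volume $\sim d_j^N$ costs $d_j^{-N/2}$) gives exactly $Chd_j^{-N/2}$, and hence the second term.

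The main obstacle will be the far-field estimate: one must pass from the weak formulation \eref{eq: PDE for w} and the $L^2$-normalized, compactly supported datum $\varphi$ to a pointwise ($W^{1,\infty}$ or $W^{2,\infty}$) bound on $w$ on the region $\Omega_h \setminus A_j^{(1/2)}$ with the sharp $d_j^{-N/2}$ dependence, and to do so uniformly in $j$ up to $j=J$ where $d_J \sim \operatorname{diam}\Omega$ and the annuli meet the curved boundary $\Gamma$. This is where \lref{lem: estimate for w} does the heavy lifting; I would also need to be slightly careful that $\operatorname{supp}\varphi \subset A_j$ together with \lref{lem: consistency of annuli and triangulation} guarantees $w$ and $\mathcal I_h \tilde w$ are genuinely evaluated away from the singularity on $\Omega_h \setminus A_j^{(1/2)}$ (the $1/2$ versus $1/4$ shift in the expanded annuli leaves the needed buffer), so no boundary-skin complication from $\operatorname{supp}\varphi$ intrudes into the far region.
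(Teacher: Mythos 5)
Your proposal is correct and follows essentially the same route as the paper: split $a_h(\tilde w - w_h,\tilde g - g_h)$ into the parts near and away from $A_j$, bound the near part by $\|\tilde w - w_h\|_{H^1(\Omega_h)}\|\tilde g - g_h\|_{H^1(\Omega_h\cap A_j^{(1/2)})} \le Ch\|\tilde g - g_h\|_{H^1(\Omega_h\cap A_j^{(1/2)})}$ via interpolation plus $H^2$-regularity, and the far part by $\|\tilde w - w_h\|_{W^{1,\infty}(\Omega_h\setminus A_j^{(1/2)})}\|\tilde g - g_h\|_{W^{1,1}(\Omega_h)}$ with $\|\tilde w - w_h\|_{W^{1,\infty}(\Omega_h\setminus A_j^{(1/2)})} \le Ch\|\nabla^2\tilde w\|_{L^\infty(\Omega_h\setminus A_j^{(1/4)})} \le Chd_j^{-N/2}$ from \lref{lem: estimate for w}. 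Your remarks about the $1/2$ versus $1/4$ buffer (via \lref{lem: consistency of annuli and triangulation}) and the $d_j^{-N/2}$ scaling are exactly the ingredients the paper uses.
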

\begin{proof}
	By the H\"older inequality mentioned above,
	\begin{align*}
		|I_1| \le \|\tilde w - w_h\|_{H^1(\Omega_h)} \|\tilde g - g_h\|_{H^1(\Omega_h\cap A_j^{(1/2)})} + \|\tilde w - w_h\|_{W^{1,\infty}(\Omega_h\setminus A_j^{(1/2)})} \|\tilde g - g_h\|_{W^{1,1}(\Omega_h)},
	\end{align*}
	where we notice that
	\begin{equation*}
		\|\tilde w - w_h\|_{H^1(\Omega_h)} \le Ch \|w\|_{H^2(\Omega)} \le Ch \|\varphi\|_{L^2(\mathbb R^N)} = Ch,
	\end{equation*}
	and from \lref{lem: estimate for w} that
	\begin{equation*}
		\|\tilde w - w_h\|_{W^{1,\infty}(\Omega_h\setminus A_j^{(1/2)})} \le Ch \|\nabla^2\tilde w\|_{L^\infty(\Omega_h\setminus A_j^{(1/4)})} \le Chd_j^{-N/2}.
	\end{equation*}
	This completes the proof.
\end{proof}

\begin{lem}
	$I_2$ is bounded as
	\begin{equation*}
		|I_2| \le Ch^{1/2} \|\tilde g - g_h\|_{L^2(\Omega_h \cap A_j^{(3/4)})} + Ch \|\tilde g - g_h\|_{H^1(\Omega_h \cap A_j^{(3/4)})} + Chd_j^{-N/2} \|\tilde g - g_h\|_{W^{1,1}(\Omega_h)}.
	\end{equation*}
\end{lem}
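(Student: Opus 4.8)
Write $|I_2|=|\operatorname{Res}_w(\tilde g-g_h)|$ and use the definition of $\operatorname{Res}_w$ to split
\[
\operatorname{Res}_w(\tilde g-g_h)=(-\Delta\tilde w+\tilde w-\varphi,\tilde g-g_h)_{\Omega_h\setminus\Omega}+(\partial_{n_h}\tilde w,\tilde g-g_h)_{\Gamma_h},
\]
and further split each of these two integrals into a part localized to $A_j^{(1/2)}$ (``near $A_j$'') and a part on the complement of $A_j^{(1/2)}$ (``away from $A_j$''), mimicking the treatment of $I_1$ in \lref{lem: estimate of I1 in Sec5}. Since $\operatorname{supp}\varphi\subset\Omega_h\cap A_j\subset A_j^{(1/2)}$ and $\operatorname{supp}\varphi$ is a compact subset of $\Omega_h\cap A_j$, hence disjoint from $\Gamma_h$, in the far pieces $\varphi$ is simply absent, while in the near pieces it may be retained with $\|\varphi\|_{L^2(\mathbb R^N)}=1$. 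For the $\tilde w$-factors I would use, near $A_j$, the $H^2$-regularity bound $\|w\|_{H^2(\Omega)}\le C\|\varphi\|_{L^2(\mathbb R^N)}=C$ together with the extension stability $\|\tilde w\|_{H^2(\Gamma(\delta))}\le C\|w\|_{H^2(\Omega)}$ from \pref{prop: stability of extension}; and, away from $A_j$, the Green's-function type bounds $\|\nabla^k\tilde w\|_{L^\infty(\Omega_h\setminus A_j^{(1/4)})}\le Cd_j^{-N/2}$ $(k=0,1,2)$ supplied by \lref{lem: estimate for w}, exactly as in \lref{lem: estimate of I1 in Sec5}.

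\textbf{Away from $A_j$.} Here I estimate by the Hölder pairing $\|\cdot\|_{L^\infty}\|\cdot\|_{L^1}$. On the skin, $|(-\Delta\tilde w+\tilde w,\tilde g-g_h)_{(\Omega_h\setminus\Omega)\setminus A_j^{(1/2)}}|\le(\|\nabla^2\tilde w\|_{L^\infty(\Omega_h\setminus A_j^{(1/4)})}+\|\tilde w\|_{L^\infty(\Omega_h\setminus A_j^{(1/4)})})\|\tilde g-g_h\|_{L^1(\Omega_h\setminus\Omega)}\le Cd_j^{-N/2}\|\tilde g-g_h\|_{L^1(\Omega_h\setminus\Omega)}$. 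On $\Gamma_h$, rewriting $\partial_{n_h}\tilde w=\nabla\tilde w\cdot(n_h-n\circ\pi)+(\nabla\tilde w-(\nabla\tilde w)\circ\pi)\cdot n\circ\pi$ (legitimate since $\partial_n w=0$ on $\Gamma$, so $(\nabla\tilde w)\circ\pi\cdot n\circ\pi=0$ on $\Gamma_h$) and using $\|n_h-n\circ\pi\|_{L^\infty(\Gamma_h)}\le Ch$ together with the $L^\infty$-version of \eref{eq: boundary-skin estimates}$_3$ gives $\|\partial_{n_h}\tilde w\|_{L^\infty(\Gamma_h\setminus A_j^{(1/4)})}\le Ch\|\nabla\tilde w\|_{L^\infty}+C\delta\|\nabla^2\tilde w\|_{L^\infty}\le Chd_j^{-N/2}$, hence the corresponding integral is $\le Chd_j^{-N/2}\|\tilde g-g_h\|_{L^1(\Gamma_h)}$. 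Finally $\|\tilde g-g_h\|_{L^1(\Omega_h\setminus\Omega)}\le C\delta\|\tilde g-g_h\|_{W^{1,1}(\Omega_h)}$ by \eref{eq: boundary-skin estimate 2} with $p=1$ and the $L^1$-trace theorem (as in \sref{sec3}) and $\|\tilde g-g_h\|_{L^1(\Gamma_h)}\le C\|\tilde g-g_h\|_{W^{1,1}(\Omega_h)}$; since $\delta=C_{0E}h^2\le h$, both far contributions are $\le Chd_j^{-N/2}\|\tilde g-g_h\|_{W^{1,1}(\Omega_h)}$, which is the third term of the claim.

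\textbf{Near $A_j$.} Apply the Cauchy–Schwarz inequality on $(\Omega_h\setminus\Omega)\cap A_j^{(1/2)}$ and on $\Gamma_h\cap A_j^{(1/2)}$. The $\tilde w$-factors are bounded by $\|-\Delta\tilde w+\tilde w-\varphi\|_{L^2(\Omega_h\setminus\Omega)}\le C$ and, again from $\partial_n w=0$, $\|\partial_{n_h}\tilde w\|_{L^2(\Gamma_h)}\le Ch\|\nabla\tilde w\|_{L^2(\Gamma_h)}+C\delta^{1/2}\|\nabla^2\tilde w\|_{L^2(\Gamma(\delta))}\le Ch$ (using \eref{eq: boundary-skin estimates}$_3$ with $p=2$, the trace theorem, \pref{prop: stability of extension}, and $\delta^{1/2}=C_{0E}^{1/2}h$). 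The remaining factor $\|\tilde g-g_h\|_{L^2((\Omega_h\setminus\Omega)\cap A_j^{(1/2)})}$ is reduced, via a localized form of \eref{eq: boundary-skin estimate 2} (insert a cutoff equal to $1$ on $A_j^{(1/2)}$, supported in $A_j^{(3/4)}$, with $\|\nabla\chi\|_{L^\infty}\le Cd_j^{-1}$) followed by a localized trace inequality $\|v\|_{L^2(\Gamma_h\cap A_j^{(3/4)})}\le C\|v\|_{L^2(\Omega_h\cap A_j^{(3/4)})}^{1/2}(\|v\|_{H^1(\Omega_h\cap A_j^{(3/4)})}+d_j^{-1}\|v\|_{L^2(\Omega_h\cap A_j^{(3/4)})})^{1/2}$, to $C\delta^{1/2}\|\tilde g-g_h\|_{L^2(\Omega_h\cap A_j^{(3/4)})}^{1/2}\|\tilde g-g_h\|_{H^1(\Omega_h\cap A_j^{(3/4)})}^{1/2}$ plus $O(\delta)$ times the $H^1$-norm and $O(\delta d_j^{-1})$ times the $L^2$-norm; the same localized trace is applied directly to $\|\tilde g-g_h\|_{L^2(\Gamma_h\cap A_j^{(1/2)})}$ in the boundary integral. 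Thus every near-$A_j$ piece has the shape $Ch\|\tilde g-g_h\|_{L^2(\Omega_h\cap A_j^{(3/4)})}^{1/2}\|\tilde g-g_h\|_{H^1(\Omega_h\cap A_j^{(3/4)})}^{1/2}+Ch^2\|\tilde g-g_h\|_{H^1(\Omega_h\cap A_j^{(3/4)})}+Chd_j^{-1/2}\|\tilde g-g_h\|_{L^2(\Omega_h\cap A_j^{(3/4)})}$, and the weighted Young inequality $h\,a^{1/2}b^{1/2}\le\tfrac12 h^{1/2}a+\tfrac12 h^{3/2}b$ together with $d_j\ge d_0=Kh$ (so $hd_j^{-1/2}\le K^{-1/2}h^{1/2}\le h^{1/2}$ and $h^2\le h$) collapses this to $Ch^{1/2}\|\tilde g-g_h\|_{L^2(\Omega_h\cap A_j^{(3/4)})}+Ch\|\tilde g-g_h\|_{H^1(\Omega_h\cap A_j^{(3/4)})}$. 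Adding the near and far bounds yields the stated estimate.

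\textbf{Main obstacle.} The delicate step is the near-$A_j$ accounting: one must produce localized versions of the global boundary-skin estimates \eref{eq: boundary-skin estimate 2} and \eref{eq: boundary-skin estimates}$_3$ and of the trace inequality of \sref{sec2.1} by inserting cutoff functions — which is precisely what forces the enlargement from $A_j^{(1/2)}$ to $A_j^{(3/4)}$ — and then to track the powers of $\delta=C_{0E}h^2$ carefully enough that the geometric-mean term $h\,\|\cdot\|_{L^2}^{1/2}\|\cdot\|_{H^1}^{1/2}$ splits, under Young's inequality with the right weight, exactly into $h^{1/2}\|\cdot\|_{L^2}+h\|\cdot\|_{H^1}$ and not into a weaker combination. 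A minor but necessary point, used above, is that $-\Delta\tilde w+\tilde w$ in the skin $\Omega_h\setminus\Omega$ is controlled purely through $\|\nabla^2\tilde w\|$ and $\|\tilde w\|$ (no derivative of $\varphi$ appears), because $\operatorname{supp}\varphi$ is a compact subset of $\Omega_h\cap A_j$ and hence disjoint both from $\Gamma_h$ and from the region outside $A_j^{(1/2)}$.
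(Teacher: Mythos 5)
Your argument is correct and follows essentially the same route as the paper's proof: the same split of $\operatorname{Res}_w(\tilde g-g_h)$ into skin and boundary integrals, the same near/far decomposition at $A_j^{(1/2)}$ with $H^2$-regularity of $w$ near $A_j$ and the Green's-function bounds of \lref{lem: estimate for w} away from it, and the same cutoff localization to $A_j^{(3/4)}$ combined with \eref{eq: boundary-skin estimate 2} and the trace inequality. The only cosmetic differences are your use of a weighted Young inequality to collapse the term $h\|\cdot\|_{L^2}^{1/2}\|\cdot\|_{H^1}^{1/2}$ (the paper bounds the geometric mean by the sum and uses $h\le h^{1/2}$, $hd_j^{-1/2}\le h^{1/2}$) and slightly looser bookkeeping of which expanded annulus complement carries the $L^\infty$-bounds, which is covered by the remark following \lref{lem: estimate for w}.
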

\begin{proof}
	Recall that $I_2 = (\Delta\tilde w - \tilde w + \varphi, \tilde g - g_h)_{\Omega_h\setminus\Omega} - (\partial_{n_h}\tilde w, \tilde g - g_h)_{\Gamma_h} =: I_{21} + I_{22}$.
	Noting that $\varphi = 0$ in $\Omega_h\setminus A_j^{(1/2)}$ we estimate $I_{21}$ by
	\begin{align*}
		|I_{21}| &\le C(\|w\|_{H^2(\Omega)} + \|\varphi\|_{L^2(\mathbb R^N)}) \|\tilde g - g_h\|_{L^2( (\Omega_h\setminus\Omega) \cap A_j^{(1/2)})} + \|\tilde w\|_{W^{2,\infty}(\Omega_h\setminus A_j^{(1/2)})} \|\tilde g - g_h\|_{L^1(\Omega_h\setminus\Omega)} \\
			&\le C\|\tilde g - g_h\|_{L^2( (\Omega_h\setminus\Omega) \cap A_j^{(1/2)})} + Cd_j^{-N/2} \|\tilde g - g_h\|_{L^1(\Omega_h\setminus\Omega)}.
	\end{align*}
	To address the first term we introduce $\omega_j' \in C_0^\infty(\mathbb R^N), \, \omega_j' \ge 0$ such that
	\begin{equation*}
		\omega_j' \equiv 1 \quad\text{in}\quad A_j^{(1/2)}, \qquad \operatorname{supp}\omega_j' \subset A_j^{(3/4)}, \qquad \|\nabla^k \omega_j'\|_{L^\infty(\mathbb R^N)} \le Cd_j^{-k} \; (k=0,1,2).
	\end{equation*}
	Then it follows from \eref{eq: boundary-skin estimate 2} and the trace estimate that
	\begin{align}
		\|\tilde g - g_h\|_{L^2( (\Omega_h\setminus\Omega) \cap A_j^{(1/2)})} &\le \|\omega_j'(\tilde g - g_h)\|_{L^2(\Omega_h\setminus\Omega)} \notag \\
			&\le C\delta^{1/2} \|\omega_j' (\tilde g - g_h)\|_{L^2(\Gamma_h)} + C\delta \|\nabla \big( \omega_j' (\tilde g - g_h) \big)\|_{L^2(\Omega_h\setminus\Omega)} \notag \\
			&\le Ch \|\omega_j' (\tilde g - g_h)\|_{L^2(\Omega_h)}^{1/2} \|\omega_j' (\tilde g - g_h)\|_{H^1(\Omega_h)}^{1/2} \notag \\
			&\hspace{1cm} + Ch^2d_j^{-1} \|\tilde g - g_h\|_{L^2(\Omega_h \cap A_j^{(3/4)})} + Ch^2 \|\nabla (\tilde g - g_h)\|_{L^2(\Omega_h \cap A_j^{(3/4)})} \notag \\
			&\le Ch(1 + d_j^{-1/2}) \|\tilde g - g_h\|_{L^2(\Omega_h \cap A_j^{(3/4)})} + Ch \|\tilde g - g_h\|_{H^1(\Omega_h \cap A_j^{(3/4)})} \notag \\
			&\le Ch^{1/2} \|\tilde g - g_h\|_{L^2(\Omega_h \cap A_j^{(3/4)})} + Ch \|\tilde g - g_h\|_{H^1(\Omega_h \cap A_j^{(3/4)})}, \label{eq1: lem5.3}
	\end{align}
	where we have used $hd_j^{-1} \le 1$ and $h\le 1$. Again by \eref{eq: boundary-skin estimate 2} we also have
	\begin{align*}
		\|\tilde g - g_h\|_{L^1(\Omega_h\setminus\Omega)} \le C\delta (\|\tilde g - g_h\|_{L^1(\Gamma_h)} + \|\nabla(\tilde g - g_h)\|_{L^1(\Omega_h\setminus\Omega)}) \le Ch^2\|\tilde g - g_h\|_{W^{1,1}(\Omega_h)}.
	\end{align*}
	Combining the estimates above now gives
	\begin{equation} \label{eq: estimate of I21 in Sec5}
		|I_{21}| \le Ch^{1/2} \|\tilde g - g_h\|_{L^2(\Omega_h \cap A_j^{(3/4)})} + Ch \|\tilde g - g_h\|_{H^1(\Omega_h \cap A_j^{(3/4)})} + Ch^2d_j^{-N/2} \|\tilde g - g_h\|_{W^{1,1}(\Omega_h)}.
	\end{equation}
	
	Next we estimate $I_{22}$ by
	\begin{equation*}
		|I_{22}| \le \|\partial_{n_h}\tilde w\|_{L^2(\Gamma_h)} \|\tilde g - g_h\|_{L^2(\Gamma_h\cap A_j^{(1/2)})} + \|\partial_{n_h}\tilde w\|_{L^\infty(\Gamma_h \setminus A_j^{(1/2)})} \|\tilde g - g_h\|_{L^1(\Gamma_h)}.
	\end{equation*}
	For the first term we see that
	\begin{align*}
		\|\partial_{n_h}\tilde w\|_{L^2(\Gamma_h)} &\le \|\nabla\tilde w \cdot (n_h - n\circ\pi)\|_{L^2(\Gamma_h)} + \|\big( \nabla\tilde w - (\nabla\tilde w)\circ\pi \big)\cdot n\circ\pi\|_{L^2(\Gamma_h)} \\
			&\le Ch \|\nabla\tilde w\|_{L^2(\Gamma_h)} + C\delta^{1/2} \|\nabla^2\tilde w\|_{L^2(\Gamma(\delta))} \le Ch\|w\|_{H^2(\Omega)} \le Ch,
	\end{align*}
	and, in a similar way as we derived \eref{eq1: lem5.3}, that
	\begin{equation*}
		\|\tilde g - g_h\|_{L^2(\Gamma_h\cap A_j^{(1/2)})} \le Cd_j^{-1/2} \|\tilde g - g_h\|_{L^2(\Omega_h \cap A_j^{(3/4)})} + C \|\tilde g - g_h\|_{H^1(\Omega_h \cap A_j^{(3/4)})}.
	\end{equation*}
	For the second term, observe that
	\begin{align*}
		\|\partial_{n_h}\tilde w\|_{L^\infty(\Gamma_h \setminus A_j^{(1/2)})} &\le \|\nabla\tilde w \cdot (n_h - n\circ\pi)\|_{L^\infty(\Gamma_h \setminus A_j^{(1/2)})} + \|\big( \nabla\tilde w - (\nabla\tilde w)\circ\pi \big)\cdot n\circ\pi\|_{L^\infty(\Gamma_h \setminus A_j^{(1/2)})} \\
			&\le Ch \|\nabla\tilde w\|_{L^\infty(\Gamma(\delta) \setminus A_j^{(1/2)} )} + C\delta \|\nabla^2\tilde w\|_{L^\infty(\Gamma(\delta) \setminus A_j^{(1/4)})} \\
			&\le Chd_j^{1-N/2} + Ch^2d_j^{-N/2} \le Chd_j^{1-N/2},
	\end{align*}
	and that $\|\tilde g - g_h\|_{L^1(\Gamma_h)} \le C\|\tilde g - g_h\|_{W^{1,1}(\Omega_h)}$.
	Combining these estimates, we deduce
	\begin{equation} \label{eq: estimate of I22 in Sec5}
		|I_{22}| \le Chd_j^{-1/2} \|\tilde g - g_h\|_{L^2(\Omega_h \cap A_j^{(3/4)})} + Ch \|\tilde g - g_h\|_{H^1(\Omega_h \cap A_j^{(3/4)})} + Chd_j^{1-N/2} \|\tilde g - g_h\|_{W^{1,1}(\Omega_h)}.
	\end{equation}
	
	From \eref{eq: estimate of I21 in Sec5} and \eref{eq: estimate of I22 in Sec5}, together with $h\le d_j\le 2\operatorname{diam}\Omega$, we conclude the desired estimate.
\end{proof}

\begin{lem}
	$|I_3| \le Ch^{5/2-m} d_j^{-N/2}$.
\end{lem}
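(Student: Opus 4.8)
The plan is to start from the identity $I_3 = -\operatorname{Res}_g(\tilde w - w_h)$ established above, with the choice $w_h = \mathcal I_h\tilde w$ already fixed, so that $\tilde w - w_h$ is the interpolation error of $\tilde w = Pw$; since $-\Delta w + w = \varphi$, $\partial_n w = 0$ on $\Gamma$ with $\|\varphi\|_{L^2(\mathbb R^N)} = 1$, one has $\|w\|_{H^2(\Omega)}\le C$ and hence, by \pref{prop: stability of extension} and the interpolation estimate of \sref{sec2.1}, $\|\tilde w - w_h\|_{L^2(\Omega_h)}\le Ch^2$ and $\|\tilde w - w_h\|_{H^1(\Omega_h)}\le Ch$; moreover $\tilde w$ obeys the localized bounds of \lref{lem: estimate for w}. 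Writing out $\operatorname{Res}_g$ produces the two contributions $(\tilde w - w_h, -\Delta\tilde g + \tilde g)_{\Omega_h\setminus\Omega}$ and $(\tilde w - w_h, \partial_{n_h}\tilde g)_{\Gamma_h}$, both supported in the boundary-skin layer, which has measure $O(\delta)$ with $\delta = C_{0E}h^2$; as in the estimate of $I_2$ in \sref{sec3} I rewrite $\partial_{n_h}\tilde g = \nabla\tilde g\cdot(n_h - n\circ\pi) + (\nabla\tilde g - (\nabla\tilde g)\circ\pi)\cdot n\circ\pi$ using $\partial_n g = 0$ on $\Gamma$.

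For each contribution I would split the region of integration into the part inside $A_j^{(1/2)}$ and the part outside. On the outer part $w$ is a smooth dual Green's function, so the interpolation estimate together with \lref{lem: estimate for w} gives $\|\tilde w - w_h\|_{L^\infty(\Omega_h\setminus A_j^{(1/2)})}\le Ch^2\|\nabla^2\tilde w\|_{L^\infty(\Omega_h\setminus A_j^{(1/4)})}\le Ch^2 d_j^{-N/2}$, which paired in $L^\infty$--$L^1$ against the appendix bounds $\|\tilde g\|_{W^{2,1}(\Gamma(\delta))} + \|\partial_{n_h}\tilde g\|_{L^1(\Gamma_h)}\le C(h|\log h|)^{1-m}$ (via \cref{cor: boundary-skin estimates for g}, \eref{eq: boundary-skin estimates}$_3$ and $\|n\circ\pi - n_h\|_{L^\infty(\Gamma_h)}\le Ch$) yields a contribution $\le Ch^2(h|\log h|)^{1-m}d_j^{-N/2}\le Ch^{5/2-m}d_j^{-N/2}$. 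On the inner part $A_j^{(1/2)}$, where $w$ need no longer be pointwise regular, I would instead use an $L^2$--$L^2$ H\"older inequality: the bound $\|\tilde w - w_h\|_{L^2(\Omega_h\setminus\Omega)}\le Ch^{5/2}$ follows from \eref{eq: boundary-skin estimate 2}, the trace inequality and the two interpolation bounds above, while $\|-\Delta\tilde g + \tilde g\|_{L^2((\Omega_h\setminus\Omega)\cap A_j^{(1/2)})}$ and $\|\partial_{n_h}\tilde g\|_{L^2(\Gamma_h\cap A_j^{(1/2)})}$ are controlled by the boundary-skin estimates for $g$ of the appendix, of respective order $C\delta^{1/2}d_j^{-m-(N+1)/2}$ and $Chd_j^{-m-(N-1)/2}$ for $j\ge1$; using $\delta^{1/2} = C_{0E}^{1/2}h$, $hd_j^{-1}\le1$ and $d_j\le2\operatorname{diam}\Omega$ these products are again $\le Ch^{5/2-m}d_j^{-N/2}$. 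The innermost annulus $j=0$ is dealt with in the same spirit as $I_1$ in \sref{sec4}, using the global bound $\|w\|_{H^2(\Omega)}\le C$ in place of \lref{lem: estimate for w}. Adding the inner and outer contributions finishes the estimate.

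The main obstacle is the inner region $A_j^{(1/2)}$: there the source of $w$ sits inside the annulus, so $\tilde w - w_h$ carries no useful pointwise bound and one must absorb the singular behaviour of $\tilde g$ near $x_0$ against the two powers of $h$ supplied by the thinness $\delta = O(h^2)$ of the boundary skin together with the single power coming from interpolation and the trace inequality. Keeping all constants uniform in $j$ and in $K$ is the delicate point — for $j=0$ the naive $d_0$-scaling of the derivatives of $\tilde g$ picks up powers of $K$, and one must exploit instead that the boundary skin lies at distance of order $h$ from $x_0$, so that the relevant dyadic sums are geometric and dominated by the scale $h$ rather than $d_0 = Kh$.
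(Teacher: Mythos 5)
Your proposal is correct and follows essentially the same route as the paper: with $w_h=\mathcal I_h\tilde w$ you split both the skin integral and the boundary integral of $\operatorname{Res}_g(\tilde w-w_h)$ into parts near and away from $A_j^{(1/2)}$, treat the far parts by $L^\infty$--$L^1$ H\"older using \lref{lem: estimate for w} together with the global bounds of \cref{cor: boundary-skin estimates for g}, and the near parts by $L^2$--$L^2$ H\"older using the local bounds of \lref{lem: local Lp estimate of g in boundary-skins} and the decomposition of $\partial_{n_h}\tilde g$ via $\partial_n g=0$, which is exactly the paper's argument (your use of $\|\tilde w-w_h\|_{L^2(\Omega_h\setminus\Omega)}\le Ch^{5/2}$ even slightly sharpens the inner volume term). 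The only point to make precise is that in the inner boundary term the H\"older partner must be the trace bound $\|\tilde w-w_h\|_{L^2(\Gamma_h)}\le Ch^{3/2}$ rather than the skin norm, giving $Ch^{3/2}\cdot Chd_j^{1/2-m-N/2}\le Ch^{5/2-m}d_j^{-N/2}$ as in the paper.
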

\begin{proof}
	Recall that $I_3 = (\tilde w - w_h, \Delta\tilde g - \tilde g)_{\Omega_h\setminus\Omega} - (\tilde w - w_h, \partial_{n_h}\tilde g)_{\Gamma_h} =: I_{31} + I_{32}$.
	We estimate $I_{31}$ by
	\begin{align*}
		|I_{31}| &\le \|\tilde w - w_h\|_{L^2(\Omega_h)} \|\tilde g\|_{H^2(\Gamma(\delta) \cap A_j^{(1/2)})} + \|\tilde w - w_h\|_{L^\infty(\Omega_h \setminus A_j^{(1/2)})} \|\tilde g\|_{W^{2,1}(\Gamma(\delta))} \\
			&\le Ch^2 \|\nabla^2\tilde w\|_{L^2(\Omega_h)} (\delta d_j^{N-1})^{1/2} d_j^{-m-N} + Ch^2 \|\nabla^2\tilde w\|_{L^\infty(\Omega_h \setminus A_j^{1/4})} \delta d_0^{-1-m} \\
			&\le Ch^3 d_j^{-1/2-m-N/2} + Ch^2d_j^{-N/2} h^{1-m} \le Ch^{5/2-m} d_j^{-N/2},
	\end{align*}
	where we have used $h\le d_j$.
	
	It remains to consider $I_{32}$; we estimate it by
	\begin{equation*}
		\|\tilde w - w_h\|_{L^2(\Gamma_h)} \|\partial_{n_h}\tilde g\|_{L^2(\Gamma_h \cap A_j^{(1/2)})} + \|\tilde w - w_h\|_{L^\infty(\Gamma_h \setminus A_j^{(1/2)})} \|\partial_{n_h}\tilde g\|_{L^1(\Gamma_h)}.
	\end{equation*}
	For the first term, we have $\|\tilde w - w_h\|_{L^2(\Gamma_h)} \le Ch^{3/2} \|\nabla^2\tilde w\|_{L^2(\Omega_h)} \le Ch^{3/2}$ and
	\begin{align*}
		&\|\partial_{n_h}\tilde g\|_{L^2(\Gamma_h \cap A_j^{(1/2)})} \\
		\le\;& |\Gamma_h \cap A_j^{(1/2)}|^{1/2} \big( \|\nabla\tilde g \cdot (n_h - n\circ\pi)\|_{L^\infty(\Gamma_h \cap A_j^{(1/2)})} + \|\nabla\tilde g - (\nabla\tilde g)\circ\pi\|_{L^\infty(\Gamma_h \cap A_j^{(1/2)})} \big) \\
		\le\;& Cd_j^{(N-1)/2} (h\|\nabla\tilde g\|_{L^\infty(\Gamma_h \cap A_j^{(1/2)})} + \delta \|\nabla^2\tilde g\|_{L^\infty(\Gamma(\delta) \cap A_j^{(3/4)})}) \\
		\le\;& Cd_j^{(N-1)/2} (hd_j^{1-m-N} + h^2 d_j^{-m-N}) \le Chd_j^{1/2-m-N/2}.
	\end{align*}
	For the second term, we have $\|\tilde w - w_h\|_{L^\infty(\Gamma_h \setminus A_j^{(1/2)})} \le Ch^2\|\nabla^2\tilde w\|_{L^\infty(\Gamma_h \setminus A_j^{(1/4)})} \le Ch^2d_j^{-N/2}$ and we find from \cref{cor: boundary-skin estimates for g} that $\|\partial_{n_h}\tilde g\|_{L^1(\Gamma_h)} \le C(h |\log h|)^{1-m} \le Ch^{(1-m)/2}$.
	Therefore,
	\begin{equation*}
		|I_{32}| \le C h^{5/2} d_j^{1/2-m-N/2} + Ch^{5/2-m/2} d_j^{-N/2} \le Ch^{5/2-m} d_j^{-N/2},
	\end{equation*}
	which completes the proof.
\end{proof}

\begin{lem} \label{lem: estimate of I4 in Sec5}
	$|I_4| \le Ch^2 d_j^{1/2-m-N/2} + Ch^{2-m} |\log h|^{1-m} d_j^{1-N/2}$.
\end{lem}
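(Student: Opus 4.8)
The plan is to bound $I_4 = \operatorname{Res}_g(\tilde w)$ by dividing the boundary-skin layer $\Omega_h\triangle\Omega$ into the part lying in the expanded annulus $A_j^{(1/2)}$ and the part lying outside, exactly as in the estimates for $I_1$--$I_3$ above. The key preliminary observation is that one should not work with the defining expression $(\tilde w, -\Delta\tilde g+\tilde g)_{\Omega_h\setminus\Omega}+(\tilde w,\partial_{n_h}\tilde g)_{\Gamma_h}$ — pairing $\tilde w$ against the second-order quantity $-\Delta\tilde g+\tilde g$ over the thin layer wastes a power of $d_j$ near $x_0$ — but rather with the equivalent bilinear-form representation from the remark preceding this lemma,
\begin{equation*}
	I_4 = a_{\Omega_h\triangle\Omega}'(\tilde w,\tilde g) = (\nabla\tilde w,\nabla\tilde g)_{\Omega_h\triangle\Omega}' + (\tilde w,\tilde g)_{\Omega_h\triangle\Omega}',
\end{equation*}
in which only first-order derivatives of $\tilde g$ occur. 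It then suffices to estimate $(\nabla\tilde w,\nabla\tilde g)_{\Omega_h\triangle\Omega}'$; the term $(\tilde w,\tilde g)_{\Omega_h\triangle\Omega}'$ is handled identically and is of strictly lower order, since $g$ carries one fewer derivative than $\nabla g$ (its local bounds improve by a factor $d_j$). As usual, the two pieces $\Omega_h\setminus\Omega$ and $\Omega\setminus\Omega_h$ of $\Omega_h\triangle\Omega$ are treated on the same footing, on the latter one simply using $\tilde w=w$ and $\tilde g=g$.

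For the portion inside $A_j^{(1/2)}$ I would use the Cauchy--Schwarz pairing $\|\nabla\tilde w\|_{L^2((\Omega_h\triangle\Omega)\cap A_j^{(1/2)})}\,\|\nabla\tilde g\|_{L^2((\Omega_h\triangle\Omega)\cap A_j^{(1/2)})}$. The first factor is bounded by $Ch$: applying \eref{eq: boundary-skin estimate 2} to the components of $\nabla\tilde w$, the $\Gamma_h$-term is controlled by the trace estimate together with $\|w\|_{H^2(\Omega)}\le C\|\varphi\|_{L^2}=C$, while the remaining $\delta\|\nabla^2\tilde w\|_{L^2(\Omega_h\setminus\Omega)}$-term is controlled through \pref{prop: stability of extension} applied to $\tilde w=Pw$ and is of lower order because $\delta=C_{0E}h^2\le h$. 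The second factor is bounded by $Chd_j^{1/2-m-N/2}$: one combines the thin-layer volume bound $|(\Omega_h\triangle\Omega)\cap A_j^{(1/2)}|\le C\delta d_j^{N-1}$ with the local estimates for $\nabla\tilde g$ on $A_j$ from \lref{lem: local Lp estimate of g} and \cref{cor: boundary-skin estimates for g} (which give $\|\nabla\tilde g\|$ of order $d_j^{1-m-N}$ there). The product of the two factors is $Ch^2 d_j^{1/2-m-N/2}$, the first term of the claim.

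For the portion outside $A_j^{(1/2)}$ I would use instead the H\"older pairing $\|\nabla\tilde w\|_{L^\infty(\Gamma(\delta)\setminus A_j^{(1/2)})}\,\|\nabla\tilde g\|_{L^1(\Omega_h\triangle\Omega)}$. The first factor is $\le Cd_j^{1-N/2}$ by the Green's-function decay estimates for the dual solution $w$ collected in \lref{lem: estimate for w} (again transported through \pref{prop: stability of extension}). For the second factor, summing the thin-layer volumes $|\Gamma(\delta)\cap A_j|\le C\delta d_j^{N-1}$ against the local gradient bounds and invoking \eref{eq: geometric sequence} gives $\|\nabla\tilde g\|_{L^1(\Gamma(\delta))}\le C\delta\sum_j d_j^{-m}\le C\delta d_0^{-m}|\log h|^{1-m}\le Ch^{2-m}|\log h|^{1-m}$ (recall $d_0=Kh$, $K\ge K_0$), which is precisely the bound $\|\tilde g\|_{W^{1,1}(\Gamma(\delta))}\le C\delta h^{-m}|\log h|^{1-m}$ furnished by \cref{cor: boundary-skin estimates for g} (the reflected part of the layer being reduced to $\Gamma(2\delta)\cap\Omega$ via \pref{prop: stability of extension}). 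Since $\Omega_h\triangle\Omega\subset\Gamma(\delta)$, multiplying the two factors yields the second term $Ch^{2-m}|\log h|^{1-m}d_j^{1-N/2}$.

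The step I expect to be the main obstacle is the estimate inside $A_j^{(1/2)}$: it must be realised through the bilinear form, moving the cut-off second derivative onto $\tilde w$ rather than onto $\tilde g$, and here it is essential that the dual solution satisfies $\|\nabla^2\tilde w\|_{L^\infty(\Omega_h\setminus A_j^{(1/4)})}\le Cd_j^{-N/2}$ — only $O(d_j^{-N/2})$, not $O(d_j^{-N})$ — precisely because $\varphi$ has unit $L^2$-mass spread over $A_j$ rather than being a point mass. A genuinely delicate bookkeeping point is the innermost annulus $A_0$, where the local bounds for $g$ pick up the $K$-dependent factor $K^{m+N/2}$ (as in the $j=0$ cases of the preceding lemmas); one has to arrange the argument so that, once this lemma is combined with the results of Sections \ref{sec4}--\ref{sec5}, these factors are absorbed by the $K^{-1}$-type gains there, keeping all final estimates uniform in $x_0$, $\partial$, and $K$.
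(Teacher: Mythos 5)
Your argument is correct and is essentially the paper's own proof: the same representation $I_4=a_{\Omega_h\triangle\Omega}'(\tilde w,\tilde g)$, the same split at $A_j^{(1/2)}$, Cauchy--Schwarz near $A_j$ giving $Ch\cdot Chd_j^{1/2-m-N/2}$, and the $L^\infty$--$L^1$ pairing away from $A_j$ with \lref{lem: estimate for w} and \cref{cor: boundary-skin estimates for g} giving $Cd_j^{1-N/2}\delta h^{-m}|\log h|^{1-m}$. The only cosmetic difference is that the local bound $\|\nabla\tilde g\|\lesssim d_j^{1-m-N}$ on the skin near $A_j$ should be credited to \lref{lem: local Lp estimate of g in boundary-skins} rather than to \lref{lem: local Lp estimate of g}, and your closing worry about $K$-dependence at $j=0$ concerns bookkeeping shared with (and handled the same way as in) the paper, not a defect of your argument.
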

\begin{proof}
	We estimate $I_4 = a_{\Omega_h \triangle \Omega}'(\tilde w, \tilde g)$ by
	\begin{align*}
		|I_4| \le \|\tilde w\|_{H^1(\Gamma(\delta))} \|\tilde g\|_{H^1(\Gamma(\delta) \cap A_j^{(1/2)})} + \|\tilde w\|_{W^{1,\infty}(\Gamma(\delta) \setminus A_j^{(1/2)})} \|\tilde g\|_{W^{1,1}(\Gamma(\delta))}.
	\end{align*}
	The first term of the right-hand side is bounded, using \eref{eq: boundary-skin estimates}$_2$ and \lref{lem: local Lp estimate of g in boundary-skins}, by
	\begin{equation*}
		C\delta^{1/2}\|w\|_{H^2(\Omega)} (\delta d_j^{N-1})^{1/2} d_j^{1-m-N} \le Ch^2 d_j^{1/2-m-N/2}.
	\end{equation*}
	The second term is bounded, in view of \lref{lem: estimate for w} and \cref{cor: boundary-skin estimates for g}, by $Cd_j^{1-N/2} \delta h^{-m} |\log h|^{1-m}$.
	This completes the proof.
\end{proof}

Now we substitute the results of Lemmas \ref{lem: estimate of I1 in Sec5}--\ref{lem: estimate of I4 in Sec5} into \eref{eq: representation of L2 error of g-gh} and multiply by $d_j^{-1+N/2}$ to obtain
\begin{align}
	&d_j^{-1+N/2} \|\tilde g - g_h\|_{L^2(\Omega_h \cap A_j)} \notag \\
	\le\; & C(hd_j^{-1}) d_j^{N/2} \|\tilde g - g_h\|_{H^1(\Omega_h \cap A_j^{(1)})} + C(hd_j^{-1}) \|\tilde g - g_h\|_{W^{1,1}(\Omega_h)} \notag \\
	&\hspace{5mm} + Ch^{1/2} d_j^{-1+N/2} \|\tilde g - g_h\|_{L^2(\Omega_h \cap A_j^{(1)})} + Ch^{5/2-m} d_j^{-1} + Ch^2 d_j^{-1/2-m} + Ch^{2-m} |\log h|^{1-m}. \label{eq1: Sec5}
\end{align}
Taking the summation for $j=0,\dots,J$, assuming $h$ is sufficiently small and using \eref{eq: reduction of expanded annuli to usual ones}, we are able to absorb the third term in the right-hand side of \eref{eq1: Sec5} and then arrive at
\begin{align*}
	\sum_{j=0}^J d_j^{-1+N/2} \|\tilde g - g_h\|_{L^2(\Omega_h \cap A_j)} &\le C(hd_0^{-1}) \Big( \sum_{j=0}^J d_j^{N/2} \|\tilde g - g_h\|_{H^1(\Omega_h\cap A_j)} + \|\tilde g - g_h\|_{W^{1,1}(\Omega_h)} \Big) \\
		&\qquad + Ch^{5/2-m} d_0^{-1} + Ch^2 d_0^{-1/2-m} + Ch^{2-m} |\log h|^{1-m} |\log d_0|,
\end{align*}
where we note that the last three terms can be estimated by $Ch^{3/2-m}$ because $d_0 = Kh\le 1$ and $K>1$.
This completes the proof of \pref{prop: weighted L2 estimate}.

\section{End of the proof of the main theorem} \label{sec6}
Substituting \eref{eq: weighted L2 estimate} into \eref{eq: weighted H1 estimate} we obtain
\begin{align*}
	\sum_{j=0}^J d_j^{N/2} \|\tilde g - g_h\|_{H^1(\Omega_h \cap A_j)} &\le C''K^{-1} \Big( \sum_{j=0}^J d_j^{N/2} \|\tilde g - g_h\|_{H^1(\Omega_h\cap A_j)} + \|\tilde g - g_h\|_{W^{1,1}(\Omega_h)} \Big) \\
	&\hspace{1cm} + CK^{m+N/2}h^{1-m} + C(h|\log h|)^{1-m}.
\end{align*}
If $K \ge 2C''$, then it follows that
\begin{equation*}
	\sum_{j=0}^J d_j^{N/2} \|\tilde g - g_h\|_{H^1(\Omega_h \cap A_j)} \le CK^{-1} \|\tilde g - g_h\|_{W^{1,1}(\Omega_h)} + CK^{m+N/2}h^{1-m} + C(h|\log h|)^{1-m},
\end{equation*}
which combined with \eref{eq: W1,1 is bdd by weighted H1} yields
\begin{equation*}
	\|\tilde g - g_h\|_{W^{1,1}(\Omega_h)} \le C'''K^{-1} \|\tilde g - g_h\|_{W^{1,1}(\Omega_h)} + CK^{m+N/2}h^{1-m} + C(h|\log h|)^{1-m}.
\end{equation*}
If $K \ge 2C'''$, then this implies the desired estimate \eref{eq: W1,1 estimate for g-gh}, which together with \pref{prop: main result of Sec. 3.1} completes the proof of \tref{main theorem}.

\section{Numerical example} \label{sec7}
Letting $\Omega = \{(x, y) \in \mathbb R^2 \,:\, \frac{(x-0.12)^2}4 + \frac{(y+0.2)^2}9 < 1, \; (x-0.7)^2 + (y-0.1)^2 > 0.5^2\}$, which is non-convex, we set an exact solution to be $u(x, y) = x^2$.
We define $f$ and $\tau$ so that \eref{eq: Poisson} holds.
They have natural extensions to $\mathbb R^2$, which are exploited as $\tilde f$ and $\tilde \tau$.
Then we compute approximate solutions $u_h^k$ of \eref{eq: FEM} based on the $P_k$-finite elements ($k=1,2,3$), using the software \verb|FreeFEM++| \cite{Hec12}.
The errors $\|u - u_h^k\|_{L^\infty(\Omega_h)}$ and $\|\nabla(u - u_h^k)\|_{L^\infty(\Omega_h)}$, which are calculated with the use of $P_4$-finite element spaces, are reported in Tables \ref{tab1} and \ref{tab2}, respectively.

We see that the result for $k = 1$ is in accordance with \tref{main theorem}.
The one for $k = 3$ (although it is not covered by our theory) is also consistent with our theoretical expectation made in \rref{rem1 in sec3}(iii).
When $k = 2$, the $L^\infty$-error remains sub-optimal convergence as expected.
However, the $W^{1,\infty}$-error seems to be $O(h^2)$, which is significantly better than in the $P_3$-case.
We remark that such behavior was also observed for different (and apparently more complicated) choices of $\Omega$ and $u$.
There might be a super-convergence phenomenon in the $P_2$-approximation for Neumann problems in 2D smooth domains.

\begin{rem}
	If $k \ge 2$ and $\tilde\tau$ is chosen as $\nabla u\cdot n_h$, then $u_h^k$ agrees with $u$ (note that the above $u$ is quadratic), because this amounts to assuming that the original problem \eref{eq: Poisson} is given in a polygon $\Omega_h$.
	This was observed in our numerical experiment as well (up to rounding errors).
	However, since such $\tilde\tau$ is unavailable without knowing an exact solution, one cannot expect it in a practical computation.
\end{rem}

\begin{table}[htbp]
	\centering
	\caption{Behavior of the $L^\infty$-errors for the $P_k$-approximation ($k=1,2,3$)}
	\label{tab1}
	\begin{tabular}{ccccccc}
		$h$ & $\|u - u_h^1\|_{L^\infty(\Omega_h)}$ & rate & $\|u - u_h^2\|_{L^\infty(\Omega_h)}$ & rate & $\|u - u_h^3\|_{L^\infty(\Omega_h)}$ & rate \\
		\hline
		0.617 & 5.72e-2 & --- & 1.89e-2 & --- & 2.08e-2 & --- \\
		0.314 & 1.75e-2 & 1.8 & 4.39e-3 & 2.2 & 5.07e-3 & 2.1 \\
		0.165 & 4.64e-3 & 2.1 & 1.05e-3 & 2.2 & 1.30e-3 & 2.1 \\
		0.085 & 1.42e-3 & 1.8 & 2.55e-4 & 2.1 & 3.33e-4 & 2.1 \\
		0.043 & 3.92e-4 & 1.9 & 6.28e-5 & 2.1 & 8.31e-5 & 2.1
	\end{tabular}
\end{table}
\begin{table}[htbp]
	\centering
	\caption{Behavior of the $W^{1,\infty}$-errors for the $P_k$-approximation ($k=1,2,3$)}
	\label{tab2}
	\begin{tabular}{ccccccc}
		$h$ & $\|\nabla(u - u_h^1)\|_{L^\infty(\Omega_h)}$ & rate & $\|\nabla(u - u_h^2)\|_{L^\infty(\Omega_h)}$ & rate & $\|\nabla(u - u_h^3)\|_{L^\infty(\Omega_h)}$ & rate \\
		\hline
		0.617 & 6.24e-1 & --- & 9.98e-2 & --- & 3.91e-1 & --- \\
		0.314 & 3.21e-1 & 1.0 & 2.68e-2 & 1.9 & 2.15e-1 & 0.9 \\
		0.165 & 1.58e-1 & 1.1 & 6.85e-3 & 2.1 & 1.04e-1 & 1.1 \\
		0.085 & 9.18e-2 & 0.8 & 1.58e-3 & 2.2 & 5.47e-2 & 1.0 \\
		0.043 & 4.63e-2 & 1.0 & 4.42e-4 & 1.9 & 2.77e-2 & 1.0
	\end{tabular}
\end{table}

\appendix
\section{Auxiliary boundary-skin estimates}
\subsection{Local coordinate representation}
We exploit the notations and observations given in \cite[Section 8]{KOZ16}, which we briefly describe here.
Since $\Omega$ is a bounded $C^\infty$-domain, there exist a system of local coordinates $\{(U_r, y_r, \varphi_r)\}_{r=1}^M$ such that $\{U_r\}_{r=1}^M$ forms an open covering of $\Gamma$, $y_r = (y_r', y_{rN})$ is a rotated coordinate of $x$, and $\varphi_r: \Delta_r \to \mathbb R$ gives a graph representation $\Phi_r(y_r') := (y_r', \varphi_r(y_r'))$ of $\Gamma \cap U_r$, where $\Delta_r$ is an open cube in $\mathbb R^{N-1}_{y_r'}$. 

For $S\in \mathcal S_h$, we may assume that $S \cup \pi(S)$ is contained in some $U_r$, where $\pi : \Gamma(\delta_0) \to \Gamma$ is the projection to $\Gamma$ given in \sref{sec2.2}.
Let $b_r : \mathbb R^N \to \mathbb R^{N-1}; y_r \mapsto y_r'$ be a projection to the base set and let $S' := b_r(\pi(S))$.
Then $\Phi_r$ and $\Phi_{hr} := \pi^*\circ\Phi_r$, where $\pi^* : \Gamma\to\Gamma_h$ is the inverse map of $\pi|_{\Gamma_h}$, give smooth parameterizations of $\pi(S)$ and $S$ respectively, with the domain $S'$.
We also recall that $\pi^*$ is also written as $\pi^*(\Phi_r(y_r')) = \Phi_r(y_r') + t^*(\Phi_r(y_r')) n(\Phi_r(y_r'))$.

Let us represent integrals associated with $S$ in terms of local coordinates.
In what follows, we omit the subscript $r$ for simplicity.
First, surface integrals along $\pi(S)$ and $S$ are expressed as
\begin{align*}
	\int_{\pi(S)} f \, d\gamma = \int_{S'} f(\Phi(y')) \sqrt{\operatorname{det} G(y')} \, dy', \qquad \int_{S} f \, d\gamma_h = \int_{S'} f(\Phi_h(y')) \sqrt{\operatorname{det} G_h(y')} \, dy',
\end{align*}
where $G$ and $G_h$ denote the Riemannian metric tensors obtained from the parameterizations $\Phi$ and $\Phi_h$, respectively.
Next, let $\pi(S,\delta) := \{\bar x + tn(\bar x) \,:\, \bar x \in S, \; -\delta\le t\le \delta \}$ be a tubular neighborhood with the base $\pi(S)$, where $\delta = C_{0E}h^2$, and consider volume integrals over $\pi(S, \delta)$.
For this we introduce a one-to-one transformation $\Psi: S'\times [-\delta, \delta] \to \pi(S, \delta)$ by
\begin{equation*}
	y = \Psi(z', t) := \Phi(z') + t n(\Phi(z')) \Longleftrightarrow z' = b(\pi(y)), \; t = d(y).
\end{equation*}
Then, by change of variables, we obtain
\begin{equation*}
	\int_{\pi(S,\delta)} f(y) \, dy = \int_{S'\times [-\delta, \delta]} f(\Psi(z', t)) \operatorname{det} J(z', t) \, dz' dt,
\end{equation*}
where $J := \nabla_{(z', t)} \Psi$ denotes the Jacobi matrix of $\Psi$.
In the formulas above, $\operatorname{det}G$, $\operatorname{det}G_h$, and $\operatorname{det}J$ can be bounded, from above and below, by positive constants depending on the $C^{1,1}$-regularity of $\Omega$, provided $h$ is sufficiently small (for the proof, see \cite[Section 8]{KOZ16}).

\subsection{Proof of \eref{eq: boundary-skin estimate 2}}
In \cite[Theorem 8.3]{KOZ16}, we estimated the $L^p$-norm of a function in the full layer $\Gamma(\delta)$.
By slightly modifying the proof there, we can estimate it in $\Omega_h\setminus\Omega$, which is important to dispense with extensions from $\Omega_h$ to $\tilde\Omega$.
\begin{lem} \label{lem: proof of (2.2)}
	Let $f \in W^{1,p}(\Omega_h) \, (1\le p\le \infty)$ and $\delta = C_{0E}h^2$.  Then we have
	\begin{equation*}
		\|f\|_{L^p(\Omega_h\setminus\Omega)} \le C( \delta^{1/p} \|f\|_{L^p(\Gamma_h)} + \delta \|(n\circ\pi) \cdot \nabla f\|_{L^p(\Omega_h\setminus\Omega)} ),
	\end{equation*}
	where $C$ is independent of $\delta$ and $f$.
\end{lem}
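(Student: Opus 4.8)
The plan is to follow the proof of \cite[Theorem~8.3]{KOZ16} for \eref{eq: boundary-skin estimates}$_2$, but to integrate along normal segments terminating on $\Gamma_h$ rather than on $\Gamma$, so that the segments never leave $\overline\Omega_h$ and no extension of $f$ is needed. First I would reduce to a single coordinate patch. Since $\pi|_{\Gamma_h}\colon\Gamma_h\to\Gamma$ is a homeomorphism and $\Gamma=\bigcup_{S\in\mathcal S_h}\pi(S)$, while $\Omega_h\setminus\Omega\subset\Gamma(\delta)$, every point of $\Omega_h\setminus\Omega$ lies in some tubular piece $\pi(S,\delta)=\{\bar x+tn(\bar x):\bar x\in S,\ |t|\le\delta\}$; moreover, setting $E_S:=\{x\in\Omega_h\setminus\Omega:\pi(x)\in\operatorname{int}\pi(S)\}$, the sets $E_S$ are pairwise disjoint, cover $\Omega_h\setminus\Omega$ up to a null set, and satisfy $E_S\subset\pi(S,\delta)$. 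It therefore suffices to prove $\|f\|_{L^p(E_S)}\le C(\delta^{1/p}\|f\|_{L^p(S)}+\delta\|(n\circ\pi)\cdot\nabla f\|_{L^p(E_S)})$ for each $S$ and then to sum the $p$-th powers (the case $p=\infty$ being immediate from the pointwise bound obtained below).

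Next I would use the local coordinates of the appendix on a fixed patch. With $\Psi(z',t)=\Phi(z')+tn(\Phi(z'))$ and $\Phi_h=\pi^*\circ\Phi$, one has $\Phi_h(z')=\Psi(z',t^*(\Phi(z')))\in\Gamma_h$, and, since $\pi|_{\Gamma_h}$ is injective, the normal line $s\mapsto\Psi(z',s)$ meets $\Gamma_h$ only at $s=t^*(\Phi(z'))$; hence along that line $\Omega_h$ occupies $\{s<t^*(\Phi(z'))\}$ while $\Omega$ occupies $\{s<0\}$. It follows that $\Psi^{-1}(E_S)\subset\{(z',t):z'\in S',\ 0\le t<t^*(\Phi(z'))\}$, so the fibre over each $z'$ has length at most $\|t^*\|_{L^\infty(\Gamma)}\le C_{0E}h^2=\delta$, and for $(z',t)\in\Psi^{-1}(E_S)$ the segment $\{\Psi(z',s):s\in[t,t^*(\Phi(z'))]\}$ stays in $E_S\cup\Gamma_h\subset\overline\Omega_h$. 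Since $\partial_s\Psi(z',s)=n(\Phi(z'))=(n\circ\pi)(\Psi(z',s))$, the fundamental theorem of calculus gives
\begin{equation*}
	f(\Psi(z',t))=f(\Phi_h(z'))-\int_t^{t^*(\Phi(z'))}(n\circ\pi)(\Psi(z',s))\cdot\nabla f(\Psi(z',s))\,ds,
\end{equation*}
first for $f\in C^1(\overline\Omega_h)$ and then for general $f\in W^{1,p}(\Omega_h)$ by density together with continuity of the trace $W^{1,p}(\Omega_h)\to L^p(\Gamma_h)$.

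From here the estimate is routine. Applying H\"older's inequality in $s$ with fibre length $\le\delta$ yields, for $p<\infty$, $|f(\Psi(z',t))|^p\le C|f(\Phi_h(z'))|^p+C\delta^{p-1}\int_{\{s:(z',s)\in\Psi^{-1}(E_S)\}}|(n\circ\pi)\cdot\nabla f|^p(\Psi(z',s))\,ds$ (and for $p=\infty$, $|f(\Psi(z',t))|\le|f(\Phi_h(z'))|+\delta\|(n\circ\pi)\cdot\nabla f\|_{L^\infty(E_S)}$ directly). I would then integrate in $t$ over the fibre and in $z'$ over $S'$, using that $\det J$ and $\det G_h$ are bounded above and below by constants depending only on the $C^{1,1}$-regularity of $\Omega$, together with $\int_{S'}|f(\Phi_h(z'))|^p\,dz'\le C\|f\|_{L^p(S)}^p$; this produces $\|f\|_{L^p(E_S)}^p\le C\delta\|f\|_{L^p(S)}^p+C\delta^p\|(n\circ\pi)\cdot\nabla f\|_{L^p(E_S)}^p$. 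Summing over $S\in\mathcal S_h$, using $\sum_S\|f\|_{L^p(E_S)}^p=\|f\|_{L^p(\Omega_h\setminus\Omega)}^p$ and $\sum_S\|f\|_{L^p(S)}^p=\|f\|_{L^p(\Gamma_h)}^p$, and taking $p$-th roots completes the proof.

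The main obstacle is the geometric step in the second paragraph: identifying $\Psi^{-1}(E_S)$ with the sub-graph region $\{0\le t<t^*(\Phi(z'))\}$ and checking that the connecting normal segment stays in $\overline\Omega_h$. This rests on $\pi|_{\Gamma_h}$ being a homeomorphism onto $\Gamma$ (piecewise a diffeomorphism) and on $\|t^*\|_{L^\infty(\Gamma)}\le\delta$; once it is in place, everything else parallels \cite[Theorem~8.3]{KOZ16}.
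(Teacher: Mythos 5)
Your proposal is correct and follows essentially the same route as the paper's proof of \lref{lem: proof of (2.2)}: decompose $\Omega_h\setminus\Omega$ over the boundary faces $S\in\mathcal S_h$, pass to the coordinates $\Psi(z',t)$, write $f(\Psi(z',t))-f(\Phi_h(z'))$ as a normal-line integral of $(n\circ\pi)\cdot\nabla f$ up to $t^*$, and conclude by H\"older and the uniform bounds on $\det J$ and $\det G_h$. The only difference is that you spell out the geometric identification of the fibres over $S'$ with $\{0\le t< t^*\}$ and the density/trace argument, which the paper's proof takes for granted.
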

\begin{proof}
	To simplify the notation we use the abbreviation $t^*(z')$ to imply $t^*(\Phi(z'))$.
	For each $S \in \mathcal S_h$ we observe that
	\begin{align*}
		\int_{(\Omega_h\setminus\Omega) \cap \pi(S,\delta)} |f(y)|^p\,dy &= \int_{S'} \int_0^{\max\{0, t^*(z')\}} |f(\Psi(z', t))|^p \operatorname{det} J \, dt \, dz' \\
			&\le C \int_{S'} \int_0^{\max\{0, t^*(z')\}} \Big( |f(\Phi_h(z'))|^p + |f(\Psi(z', t)) - f(\Phi_h(z'))|^p \Big) \, dt \, dz' \\
			&=: I_1 + I_2,
	\end{align*}
	and that for $z' \in S'$ and $0\le t\le t^*(z')$
	\begin{align*}
		|f(\Psi(z', t)) - f(\Phi_h(z'))| &= \left| \int_t^{t^*(z')} n(\Phi(z'))\cdot \nabla f(\Psi(z', s)) \, ds \right| \le \int_0^{t^*(z')} |n(\Phi(z')) \cdot \nabla f(\Psi(z', s))| \, ds \\
			&\le t^*(z')^{1- 1/p} \left( \int_0^{t^*(z')} |n(\Phi(z')) \cdot \nabla f(\Psi(z', s))|^p \, ds \right)^{1/p}.
	\end{align*}
	Then it follows that
	\begin{equation*}
		I_1 \le C\|t^*\|_{L^\infty(S)} \int_{S'} |f(\Phi_h(z'))|^p\, dz' \le C\delta \int_{S'} |f(\Phi_h(z'))|^p \sqrt{\operatorname{det}G_h}\,dz' = C\delta \|f\|_{L^p(S)}^p
	\end{equation*}
	and that
	\begin{align*}
		I_2 &\le C\|t^*\|_{L^\infty(S)}^p \int_{S'} \int_0^{\max\{0, t^*(z')\}} |n(\Phi(z')) \cdot \nabla f(\Psi(z', s))|^p \operatorname{det} J \, dt \, dz' \\
			&\le C\delta^p \|n\circ\pi \cdot \nabla f\|_{L^p(\pi(S,\delta))}^p.
	\end{align*}
	Adding up the above estimates for $S \in \mathcal S_h$ gives the conclusion.
\end{proof}

\begin{lem}
	For a measurable set $D \subset \mathbb R^N$ and $f \in W^{1,\infty}(\Gamma(\delta))$ we have
	\begin{equation*}
		\|f - f\circ\pi\|_{L^\infty(\Gamma(\delta) \cap D)} \le \delta \|\nabla f\|_{L^\infty(\Gamma(\delta) \cap D_{2\delta})},
	\end{equation*}
	where $D_{2\delta} = \{x\in\mathbb R^N \,:\, \operatorname{dist}(x, D) \le 2\delta\}$.
\end{lem}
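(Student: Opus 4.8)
The plan is to reduce the estimate to a one-dimensional fundamental theorem of calculus along normal segments, in the same spirit as the proof of \lref{lem: proof of (2.2)}. Fix an arbitrary point $x \in \Gamma(\delta) \cap D$. Since $\delta = C_{0E}h^2 \le \delta_0$ for $h$ sufficiently small, $x$ admits the representation $x = \bar x + t\, n(\bar x)$ with $\bar x = \pi(x) \in \Gamma$ and $t = d(x) \in [-\delta, \delta]$, so that $f(x) - f\circ\pi(x) = f(\bar x + t\, n(\bar x)) - f(\bar x)$.

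Next I would express this difference as an integral of the normal directional derivative over the segment $E := \{\bar x + s\, n(\bar x) : s \text{ lying between } 0 \text{ and } t\}$, which is contained in $\Gamma(\delta)$ because $\operatorname{dist}(\bar x + s\, n(\bar x), \Gamma) \le |s| \le |t| \le \delta$. Since $f \in W^{1,\infty}(\Gamma(\delta))$ is Lipschitz, the scalar function $s \mapsto f(\bar x + s\, n(\bar x))$ is absolutely continuous on the interval between $0$ and $t$, with derivative $n(\bar x)\cdot\nabla f(\bar x + s\, n(\bar x))$ for almost every $s$; hence
\[
	f(x) - f\circ\pi(x) = \int_0^t n(\bar x)\cdot\nabla f(\bar x + s\, n(\bar x))\,ds,
\]
and therefore $|f(x) - f\circ\pi(x)| \le |t|\,\|\nabla f\|_{L^\infty(E)} \le \delta\,\|\nabla f\|_{L^\infty(E)}$.

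To conclude it only remains to check $E \subset D_{2\delta}$, and this is the one step that deserves care. Every point $\bar x + s\, n(\bar x) \in E$ lies on the same line through $\bar x$ in the direction $n(\bar x)$ as $x = \bar x + t\, n(\bar x)$ does, so $|x - (\bar x + s\, n(\bar x))| = |t - s| \le |t| \le \delta \le 2\delta$; since $x \in D$, this gives $\operatorname{dist}(\bar x + s\, n(\bar x), D) \le 2\delta$, i.e.\ $E \subset D_{2\delta}$. Combining with $E \subset \Gamma(\delta)$ we get $\|\nabla f\|_{L^\infty(E)} \le \|\nabla f\|_{L^\infty(\Gamma(\delta) \cap D_{2\delta})}$, and taking the supremum over $x \in \Gamma(\delta) \cap D$ finishes the proof. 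This geometric bookkeeping is exactly why the statement enlarges $D$ by $2\delta$ rather than merely $\delta$: the extra room absorbs the possibility that the base point $\pi(x)$ itself falls outside $D$ and ensures the whole normal segment stays within $D_{2\delta}$.
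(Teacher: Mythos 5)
Your argument is correct and is exactly the approach the paper has in mind: its proof is a one-line appeal to the Lipschitz continuity of $f$, and your integration of the normal derivative along the segment from $\pi(x)$ to $x$ supplies precisely the omitted details. One small quibble: your closing remark overstates the role of the $2\delta$, since your own estimate shows the segment already lies in $D_{\delta}$, so the enlargement to $D_{2\delta}$ in the statement is merely generous slack rather than a necessity of the argument.
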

\begin{proof}
	This is an easy consequence of the Lipschitz continuity of $f$.
\end{proof}

\subsection{Proof of \pref{prop: stability of extension}}
Let us prove stability properties of the extension operator $P$ defined in \sref{sec2.3}.
\begin{thm} \label{thm: extension operator}
	Let $f \in W^{k,p}(\Omega)$ with $k=0,1,2$, and $p \in [1, \infty]$.
	Then we have
	\begin{align*}
		\|Pf\|_{W^{k, p}(\Gamma(\delta))} &\le C\|f\|_{W^{k, p}(\Omega \cap \Gamma(2\delta))},
	\end{align*}
	where $C$ is independent of $\delta$ and $f$.
\end{thm}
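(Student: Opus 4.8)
The plan is to flatten the tubular neighbourhood so that the two-point reflection defining $P$ becomes an operation in the single normal variable, prove a model estimate for that operation, and then glue the two sides back together across $\Gamma$. Note first that on the interior half $\Omega\cap\Gamma(\delta) = \{x = \bar x + tn(\bar x) : \bar x\in\Gamma,\, -\delta\le t<0\}$ one has $Pf = f$, so that half contributes only $\|f\|_{W^{k,p}(\Omega\cap\Gamma(\delta))}\le\|f\|_{W^{k,p}(\Omega\cap\Gamma(2\delta))}$ to the left-hand side; the real work concerns the exterior half $\{0\le t\le\delta\}$ and the verification that the resulting function lies in $W^{k,p}$ of the two-sided layer, not merely piecewise.

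First I would cover $\Gamma$ by the finitely many charts $\{U_r\}$ of the appendix and, for $h$ small enough that $2\delta = 2C_{0E}h^2\le\delta_0$, flatten a neighbourhood of $\Gamma\cap U_r$ in $\Gamma(2\delta)$ by $\Psi_r(z',t) := \Phi_r(z') + t\,n(\Phi_r(z'))$ — the same transformation used in the appendix, with base $\Gamma\cap U_r$ in place of $\pi(S)$. This $\Psi_r$ is a $C^\infty$-diffeomorphism of $\Delta_r\times[-2\delta,2\delta]$ onto its image, and, crucially, $\det J_r$ together with all derivatives of $\Psi_r$ and $\Psi_r^{-1}$ up to order $k$ are bounded above and below by constants depending only on the finite regularity of $\Omega$, hence independently of $\delta$ and $h$, exactly as recorded in the appendix following \cite{KOZ16}. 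By the chain rule and a partition of unity, $\|Pf\|_{W^{k,p}(\Gamma(\delta))}$ is comparable to $\sum_r\|g_r\|_{W^{k,p}(\Delta_r\times(-\delta,\delta))}$ and $\|f\|_{W^{k,p}(\Omega\cap\Gamma(2\delta))}$ to $\sum_r\|g_r\|_{W^{k,p}(\Delta_r\times(-2\delta,0))}$, where $g_r := f\circ\Psi_r$; in these variables $P$ acts as the model reflection $\bar g(z',t) = 3g(z',-t) - 2g(z',-2t)$ for $t\ge0$.

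Second I would establish the model estimate $\|\bar g\|_{W^{k,p}(\Delta_r\times(0,\delta))}\le C(k)\,\|g\|_{W^{k,p}(\Delta_r\times(-2\delta,0))}$ with $C(k)$ independent of $\delta$. Differentiating gives $\partial_{z'}^\beta\partial_t^j\bar g(z',t) = 3(-1)^j(\partial_{z'}^\beta\partial_t^j g)(z',-t) - 2(-2)^j(\partial_{z'}^\beta\partial_t^j g)(z',-2t)$ for $|\beta|+j\le k$, so the chain-rule factors are at most $2^{k+1}$; applying the substitutions $s=-t$ and $s=-2t$ in the $t$-integral (Jacobians $1$ and $1/2$) bounds each term by the corresponding norm of $\partial^\alpha g$ on $\Delta_r\times(-2\delta,0)$, uniformly in $\delta$ (the case $p=\infty$ being immediate). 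For the gluing, observe that the coefficients $3$ and $-2$ were chosen precisely so that $Pf$ and $\nabla Pf$ have coinciding traces on $\Gamma$ from both sides: continuity at $t=0$ gives the value and therefore all tangential derivatives, while $\frac{d}{dt}\bigl(3g(-t)-2g(-2t)\bigr) = -3g'(-t)+4g'(-2t)$ equals $g'(0)$ at $t=0$, matching the transverse derivative. Hence, by the standard gluing lemma for Sobolev functions — if the traces of $\partial^\alpha(\cdot)$ for $|\alpha|\le k-1$ agree across a smooth interface, then a function assembled from $W^{k,p}$ pieces on the two sides is again $W^{k,p}$ — the reflected-and-glued $g_r$ lies in $W^{k,p}$ of the full slab $\Delta_r\times(-\delta,\delta)$ (no matching is needed for $k=0$, continuity suffices for $k=1$, and both $Pf$ and $\nabla Pf$ must match for $k=2$).

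Transporting back through the charts and summing the two halves then gives the asserted bound, with $C$ depending only on $k$, $p$, and the regularity of $\Omega$. The hard part is bookkeeping the $\delta$-uniformity: one must be sure that flattening a layer of thickness $\delta$ brings in no $\delta$-dependent constants — it does not, since $\Psi_r$ is a fixed diffeomorphism rather than a rescaling and its metric and Jacobian bounds are the $\delta$-independent ones from the appendix — and likewise that the reflection's changes of variables are harmless. A minor but genuine technical point is the $W^{2,p}$ gluing for small $p$, where the traces of first derivatives must be read in the fractional trace sense; this is routine and should simply be noted.
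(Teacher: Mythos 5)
Your proposal is correct and follows essentially the same route as the paper's proof: both pass to the tubular-neighbourhood coordinates $\Psi(z',t)=\Phi(z')+t\,n(\Phi(z'))$ with the $\delta$-independent Jacobian bounds from \cite{KOZ16}, bound the reflected terms by the changes of variables $t\mapsto -t$ and $t\mapsto -2t$ landing in $\Omega\cap\Gamma(2\delta)$, and use that the coefficients $3,-2$ make $Pf$ and $\nabla Pf$ match $f$ and $\nabla f$ on $\Gamma$ so that the glued function is genuinely in $W^{k,p}$ across the interface. The only (cosmetic) differences are that you organize the localization by the charts $U_r$ with a partition of unity rather than by the faces $S\in\mathcal S_h$, and that you state the Sobolev gluing lemma explicitly where the paper merely notes the trace identity $\nabla Pf=\nabla f$ on $\Gamma$.
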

\begin{proof}	
	First, for each $S \in \mathcal S_h$ we show
	\begin{equation*}
		\|Pf\|_{L^p(\pi(S,\delta) \setminus \Omega)}^p \le C \|f\|_{L^p(\pi(S, 2\delta) \cap \Omega)}^p.
	\end{equation*}
	In fact we have
	\begin{align*}
		\int_{\pi(S,\delta) \setminus \Omega} |Pf(y)|^p \, dy &\le C \int_{S' \times [0, \delta]} |3f(z' - tn(z')) - 2f(z' - 2t n(z'))|^p \,dz'dt \\
			&\le C \int_{S' \times [0, \delta]} \Big( |f(z' - tn(z'))|^p + |f(z' - 2t n(z'))|^p \Big) \,dz'dt \\
			&\le C \int_{\pi(S, \delta) \cap \Omega} |f(y)|^p \,dy + C \int_{\pi(S, 2\delta) \cap \Omega} |f(y)|^p \,dy.
	\end{align*}
	
	Next we show
	\begin{equation} \label{eq: stability of grad Pf}
		\|\nabla Pf\|_{L^p(\pi(S,\delta) \setminus \Omega)}^p \le C \|\nabla f\|_{L^p(\pi(S, 2\delta) \cap \Omega)}^p.
	\end{equation}
	Since by the chain rule $\nabla_y = \nabla_y(b\circ\pi)\nabla_{z'} + (\nabla_yd)\partial_t$ and since $Pf(y) = 3f\circ\Psi(z', -t) - 2f\circ\Psi(z', -2t)$, it follows that
	\begin{align}
		\nabla Pf(y) &= \nabla_y(b\circ\pi) \Big( 3\nabla_{z'}(f\circ\Psi)|_{(z',-t)} - 2\nabla_{z'}(f\circ\Psi)|_{(z',-2t)} \Big) \notag \\
			&\hspace{8.5mm} + \nabla_yd \Big( -3\partial_t(f\circ\Psi)|_{(z',-t)} + 4\partial_t(f\circ\Psi)|_{(z',-2t)} \Big), \quad y \in \pi(S,\delta) \setminus \Omega. \label{eq: representation of grad Pf}
	\end{align}
	In particular, if $y \in \Gamma$ i.e.\ $t = 0$, then
	\begin{equation*}
		\nabla Pf(y) = \nabla_y(b\circ\pi) \nabla_{z'}(f\circ\Psi)|_{(z',0)} + (\nabla_yd) \partial_t(f\circ\Psi)|_{(z', 0)} = J^{-1}(z', 0) J(z', 0)\nabla_yf(y) = \nabla f(y),
	\end{equation*}
	which ensures that $Pf(y) \in W^{2, p}(\pi(S, \delta))$.
	Now, noting that $\nabla_y \begin{spmatrix} b\circ\pi \\ d \end{spmatrix} = J^{-1}(z', t)$ and that $\nabla_{(z', t)}(f\circ\Psi)|_{(z', -it)} = J(z', -it) (\nabla_y f)|_{\Psi(z', -it)} \, (i=1,2)$ where $J$ and $J^{-1}$ depend on the $C^{1,1}$-regularity of $\Omega$, we deduce that
	\begin{equation*}
		\int_{\pi(S,\delta) \setminus \Omega} |\nabla Pf(y)|^p \, dy \le C \int_{S'\times [0, \delta]} \Big( \big| (\nabla_yf)|_{\Psi(z', -t)} \big|^p + \big| (\nabla_yf)|_{\Psi(z', -2t)} \big|^p \Big) \, dz'dt,
	\end{equation*}
	from which \eref{eq: stability of grad Pf} follows.
	
	Finally we show
	\begin{equation} \label{eq: stability of grad-square Pf}
		\|\nabla^2 Pf\|_{L^p(\pi(S,\delta) \setminus \Omega)}^p \le C (\|\nabla^2 f\|_{L^p(\pi(S, 2\delta) \cap \Omega)}^p + \|\nabla f\|_{L^p(\pi(S, 2\delta) \cap \Omega)}^p).
	\end{equation}
	By differentiating \eref{eq: representation of grad Pf} we find that for $y \in \pi(S, \delta) \setminus \Omega$
	\begin{equation*}
		\nabla^2Pf(y) = \sum_{i=1}^2 \Big( A_i(z', t) \nabla_{(z', t)}^2(f\circ\Psi)_{(z', -it)} + B_i(z', t) \nabla_{(z', t)}(f\circ\Psi)_{(z', -it)} \Big),
	\end{equation*}
	where the coefficient tensors $A_i, B_i$ depend on the $C^{1,1}$-regularity of $\Omega$.
	Then the $L^p$-norm of the above quantity can be estimated similarly as before and one obtains \eref{eq: stability of grad-square Pf}.
	
	Adding up the above estimates for $S \in \mathcal S_h$ deduces the desired stability properties.
\end{proof}

We also need local stability of the extension operator as follows.
\begin{cor} \label{cor: local stability of P}
	For a measurable set $D \subset \mathbb R^N$ and $\delta = C_{0E}h^2$ we have
	\begin{equation*}
		\|Pf\|_{W^{k, \infty}(\Gamma(\delta) \cap D)} \le C\|f\|_{W^{k, \infty}(\Omega \cap \Gamma(2\delta) \cap D_{3\delta})} \quad (k=0,1,2),
	\end{equation*}
	where $D_{3\delta} = \{x\in\mathbb R^N \,:\, \operatorname{dist}(x, D) \le 3\delta\}$ and $C$ is independent of $\delta$, $f$, and $D$.
\end{cor}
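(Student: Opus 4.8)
The plan is to mimic the proof of \tref{thm: extension operator}, but to replace the summation of local $L^p$-contributions over $S \in \mathcal S_h$ by a pointwise $L^\infty$-estimate, while carefully tracking which region of $\Omega$ the reflected arguments of $Pf$ occupy. First I would note that on $\Gamma(\delta) \cap \Omega \cap D$ one has $Pf = f$, and this set is contained in $\Omega \cap \Gamma(2\delta) \cap D_{3\delta}$, so the claimed inequality is trivial there; it therefore remains to bound $\nabla^j Pf$, $j = 0, \dots, k$, pointwise a.e.\ on $(\Gamma(\delta) \cap D) \setminus \Omega$.

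Next I would fix such a point $y = \bar x + tn(\bar x)$, with $\bar x = \pi(y) \in \Gamma$ and $0 \le t = d(y) \le \delta$, and examine its two reflected points $\bar x - tn(\bar x)$ and $\bar x - 2tn(\bar x)$. These have signed distances $-t$ and $-2t$ to $\Gamma$, hence lie in $\overline\Omega \cap \Gamma(2\delta)$; moreover $|y - (\bar x - tn(\bar x))| = 2t$ and $|y - (\bar x - 2tn(\bar x))| = 3t$, both $\le 3\delta$, so once $y \in D$ both reflected points lie in $\Omega \cap \Gamma(2\delta) \cap D_{3\delta}$. Then I would invoke the chain-rule identities derived in the proof of \tref{thm: extension operator} --- namely \eref{eq: representation of grad Pf} for $k = 1$ and the displayed formula for $\nabla^2 Pf$ for $k = 2$, together with $\nabla_{(z',t)}(f\circ\Psi)|_{(z',-it)} = J(z',-it)(\nabla_y f)|_{\Psi(z',-it)}$ and its second-order analog --- which express $\nabla^j Pf(y)$ as a finite linear combination of $(\nabla^i f)(\bar x - tn(\bar x))$ and $(\nabla^i f)(\bar x - 2tn(\bar x))$ for $i \le j$, with coefficient tensors bounded by the $C^{1,1}$-regularity of $\Omega$ (uniformly in $h$, $\delta$, $f$, and $D$, since only the fixed finite family of local charts enters). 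Taking absolute values, using the support observation above, and then passing to the essential supremum over $y$ would give $\|\nabla^j Pf\|_{L^\infty((\Gamma(\delta) \cap D) \setminus \Omega)} \le C\|f\|_{W^{k,\infty}(\Omega \cap \Gamma(2\delta) \cap D_{3\delta})}$, which combined with the trivial bound inside $\Omega$ finishes the proof.

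The only delicate point will be the geometric bookkeeping of the enlarged set $D_{3\delta}$: the reflection $y \mapsto \bar x - 2tn(\bar x)$ displaces a point by exactly $3t$, so a $2\delta$-neighborhood would not suffice and one genuinely needs the $3\delta$-neighborhood $D_{3\delta}$. I would also remark that the identities being used already presuppose $Pf \in W^{k,\infty}$ across $\Gamma$, which was verified in the proof of \tref{thm: extension operator} via the matching relation $\nabla Pf|_\Gamma = \nabla f|_\Gamma$.
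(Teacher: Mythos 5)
Your proposal is correct and follows essentially the same route as the paper: it reuses the representation of $\nabla^j Pf$ at $y=\bar x+tn(\bar x)$ as a bounded-coefficient combination of derivatives of $f$ at the reflected points $\bar x-tn(\bar x)$, $\bar x-2tn(\bar x)$ from the proof of \tref{thm: extension operator}, together with the observation that these points lie in $\Omega\cap\Gamma(2\delta)$ and within distance $3t\le 3\delta$ of $y$, hence in $D_{3\delta}$. The paper phrases this locally on each tube $\pi(S,\delta)\setminus\Omega$, but the pointwise argument you give is the same in substance.
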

\begin{proof}
	We address the $L^\infty$-norm of $\nabla Pf$; the treatment of $Pf$ and $\nabla^2Pf$ is similar.
	For each $S \in \mathcal S_h$, we find from the analysis of \tref{thm: extension operator} that $\nabla Pf(y)$ for $y \in \pi(S, \delta) \setminus \Omega$ can be expressed as
	\begin{equation*}
		\nabla Pf(y) = \sum_{i=1}^2 A_i(z', t) (\nabla_yf)|_{\Psi(z', -it)},
	\end{equation*}
	where the matrices $A_i$ depend on the $C^{0,1}$-regularity of $\Omega$.
	Then the desired estimate follows from the observation that if $y = \Psi(z', t) \in \pi(S, \delta) \cap D \setminus \Omega$ then $\Psi(z', -it) \in \pi(S, i\delta) \cap D_{3\delta} \cap \Omega$ for $i=1,2$.
\end{proof}

\section{Analysis of regularized Green's functions}
\subsection{Estimates for $\tilde g$}
Recall that for arbitrarily fixed $x_0 \in \Omega_h$ we have introduced $\eta \in C_0^\infty(\Omega_h \cap \Omega)$ and $g_m \in C^\infty(\overline\Omega) \, (m=0,1)$ in \sref{sec3}.
Using the Green's function $G(x, y)$ for the operator $-\Delta + 1$ in $\Omega$ with the homogeneous Neumann boundary condition, one can represent $g_m$ as
\begin{equation*}
	g_0(x) = \int_{\operatorname{supp}\eta} G(x, y)\eta(y) \, dy, \qquad g_1(x) = -\int_{\operatorname{supp}\eta} \partial_yG(x, y)\eta(y) \, dy, \qquad x \in \Omega.
\end{equation*}
The following derivative estimates for $G$ are well known (see e.g.\ \cite[p.\ 965]{Kra67b}):
\begin{equation*}
	|\nabla_x^k \nabla_y^l G(x, y)| \le \begin{cases} C(1 + |x - y|^{2 - l - k - N}) &(l+k+N > 2) , \\ C(1 + \big|\log|x - y|\big|) & (N=2, l=k=0). \end{cases}
\end{equation*}
From this, combined with a dyadic decomposition of $\Omega$, we derive some local and global estimates for $g_m$ and its extension $\tilde g_m := Pg_m$.
Below the subscript $m$ will be dropped for simplicity.

\begin{lem} \label{lem: local Linf estimate of g}
	Let $\mathcal A_\Omega(x_0, d_0) = \{\Omega \cap A_j\}_{j=0}^J$ be a dyadic decomposition of $\Omega$ with $d_0 \in [4h, 1]$.
	Then, for $j=1, \dots, J$ and $k\ge 0$ we have
	\begin{equation*}
		\|\nabla^k g\|_{L^\infty(\Omega\cap A_j)} \le \begin{cases} C(1 + d_j^{2-m-k-N}) & (m+k+N>2), \\ C(1 + |\log d_j|) & (N=2, m=k=0), \end{cases} \qquad
	\end{equation*}
	where $C$ is independent of $x_0, d_0, h, j$, and $\partial$.
\end{lem}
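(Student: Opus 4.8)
The plan is to differentiate the integral representations of $g_0$ and $g_1$ under the integral sign and to insert the pointwise kernel estimates for $G$ recalled above, after observing that for $j\ge 1$ the variables $x$ and $y$ occurring in those integrals are well separated.

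First I would record the elementary geometric fact that, for $j\ge 1$, $x\in\Omega\cap A_j$ and $y\in\operatorname{supp}\eta$, the distance $|x-y|$ is comparable to $d_j$. Indeed, since $\operatorname{supp}\eta$ is contained in the simplex $T\ni x_0$ with $\operatorname{diam}T\le h$, every such $y$ satisfies $|y-x_0|\le h$; combined with $d_{j-1}\le|x-x_0|\le d_j$ and the hypothesis $d_0\ge 4h$ (which gives $h\le d_{j-1}/4$ for $j\ge 1$), the triangle inequality yields $\tfrac38 d_j\le|x-y|\le\tfrac54 d_j$. In particular the integration in the formulas for $g_m(x)$ takes place at a positive distance from the diagonal $\{x=y\}$, where $G$ is smooth, so one may differentiate under the integral sign:
\[
	\nabla_x^k g_0(x)=\int_{\operatorname{supp}\eta}\nabla_x^k G(x,y)\,\eta(y)\,dy, \qquad \nabla_x^k g_1(x)=-\int_{\operatorname{supp}\eta}\nabla_x^k\partial_y G(x,y)\,\eta(y)\,dy.
\]

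Next I would bound the kernels using the derivative estimates for $G$, together with $\tfrac38 d_j\le|x-y|\le\tfrac54 d_j$ and the monotonicity of $t\mapsto t^\alpha$. For $m=0$ this gives $|\nabla_x^k G(x,y)|\le C(1+d_j^{2-k-N})$ when $k+N>2$, and $|G(x,y)|\le C(1+\big|\log|x-y|\big|)\le C(1+|\log d_j|)$ when $N=2$, $k=0$. For $m=1$ one applies the estimate with a single $y$-derivative (so that $l+k+N>2$ holds automatically), obtaining $|\nabla_x^k\partial_y G(x,y)|\le C(1+d_j^{1-k-N})=C(1+d_j^{2-m-k-N})$, with no logarithmic case. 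Finally I would estimate $|\nabla_x^k g_m(x)|$ by the supremum over $y\in\operatorname{supp}\eta$ of the corresponding kernel bound times $\|\eta\|_{L^1(T)}$, and use $\int_T\eta\,dy=1$ — which follows from \eref{eq: properties of eta} applied to $v_h\equiv 1$, together with $\eta\ge 0$ — so that $\|\eta\|_{L^1(T)}=1$. Taking the supremum over $x\in\Omega\cap A_j$ then yields the assertion.

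This lemma is essentially a bookkeeping exercise and I do not anticipate a genuine obstacle. The only delicate points are the constant chasing needed to verify that $|x-y|$ is comparable to $d_j$ — this is where the assumption $d_0\ge 4h$ enters, and why the estimate is restricted to $j\ge 1$, since for $j=0$ one cannot separate $x$ from $\operatorname{supp}\eta$ — and keeping track of which branch of the kernel estimate is in force, in particular that the logarithmic branch occurs precisely when $m=k=0$ and $N=2$.
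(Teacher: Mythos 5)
Your argument is correct and is essentially the paper's own proof: separate $x\in\Omega\cap A_j$ ($j\ge1$) from $\operatorname{supp}\eta$ using $|y-x_0|\le h$, $|x-x_0|\ge d_{j-1}$ and $d_0\ge 4h$, then bound the differentiated Green's-function kernel on that region and use $\|\eta\|_{L^1}=1$. The only cosmetic difference is that you also track an upper bound $|x-y|\le\tfrac54 d_j$ for the logarithmic case, while the paper works directly from the lower bound $|x-y|\ge\tfrac34 d_{j-1}$; both are fine.
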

\begin{proof}
	We only consider $m+k+N>2$ because the other case can be treated similarly.
	Notice that if $x \in \Omega\cap A_j \, (j\ge1)$ and $y \in \operatorname{supp}\eta$ then $|x - y| \ge \frac34 d_{j-1}$, which is obtained from $|x - x_0| \ge d_{j-1}$ and $|y - x_0|\le h$.
	It then follows that
	\begin{align*}
		\|\nabla^k g\|_{L^\infty(\Omega\cap A_j)} &= \sup_{x\in \Omega \cap A_j} \left| \int_{\operatorname{supp} \eta} \partial_y^m \nabla_x^kG(x, y) \eta(y) \, dy \right| \le \sup_{|x - y| \ge \frac34 d_{j-1}} |\partial_y^m\nabla_x^kG(x, y)| \\
			&\le C(1 + d_j^{2-m-k-N}),
	\end{align*}
	which completes the proof.
\end{proof}

We transfer these estimates in $\Omega$ to those in $\tilde\Omega = \Omega\cup\Gamma(\delta)$ using an extension operator and its stability.
\begin{lem} \label{lem: local Lp estimate of g}
	Let $\mathcal A_\Omega(x_0, d_0) = \{\Omega \cap A_j\}_{j=0}^J$ be a dyadic decomposition of $\Omega$ with $d_0 \in [h, 1]$, $\delta = C_{0E}h^2$.
	For $p \in [1,\infty]$, $j = 1, \dots, J$, and $m = 0,1$, we have
	\begin{equation*}
		\|\nabla^2 \tilde g\|_{L^p(\tilde\Omega \cap A_j)} \le Cd_j^{-m-N/p'},
	\end{equation*}
	where $p' = p/(p-1)$ and $C$ is independent of $x_0, d_0, h, j$, and $\partial$.
\end{lem}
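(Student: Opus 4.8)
The plan is to split the integration set $\tilde\Omega\cap A_j$ into the \emph{interior part} $\Omega\cap A_j$, on which $\tilde g=Pg$ simply equals $g$, and the \emph{skin part} $(\Gamma(\delta)\setminus\Omega)\cap A_j$. The latter is a thin tube: its projection onto $\Gamma$ lies in $\Gamma\cap B(x_0;2d_j)$ and each normal fibre has length at most $2\delta$, so $|(\Gamma(\delta)\setminus\Omega)\cap A_j|\le C\delta d_j^{N-1}$. On each of the two pieces I would first establish the pointwise bound $\|\nabla^2\tilde g\|_{L^\infty}\le Cd_j^{-m-N}$ and then multiply by the $1/p$-th power of its measure, using $|\Omega\cap A_j|\le Cd_j^N$ for the interior part and, for the skin part, the tube bound together with $\delta=C_{0E}h^2\le C_{0E}d_j$ (which holds since $d_j\ge d_0\ge h$). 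In both cases the exponent collapses to $N/p-m-N=-m-N/p'$, as required.

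For the interior part I would use the representation $g=\pm\int_{\operatorname{supp}\eta}\partial_y^m G(\cdot,y)\,\eta(y)\,dy$ and the derivative bounds for $G$, exactly as in the proof of \lref{lem: local Linf estimate of g}. Since $\operatorname{supp}\eta\subset B(x_0;h)$ and $d_0\ge h$, for $j\ge1$ every point of $A_j$ — and, once $h$ is small, of the enlarged set $(A_j)_{3\delta}$ — is separated from $\operatorname{supp}\eta$ by a distance at least $d_{j-1}/4$, whence $\|g\|_{W^{2,\infty}(\Omega\cap(A_j)_{3\delta})}\le Cd_j^{-m-N}$. The separation estimate $d_{j-1}-3\delta-h\ge d_{j-1}/4$ holds because $d_{j-1}\ge d_0\ge h$ while $3\delta=3C_{0E}h^2$ is negligible compared to $h$; it can fail only at the innermost index $j=1$ when $d_0$ is merely comparable to $h$, which I treat separately below. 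For the skin part I would invoke the \emph{local} stability of the extension operator, \cref{cor: local stability of P}, with $D=A_j$, giving $\|\nabla^2\tilde g\|_{L^\infty((\Gamma(\delta)\setminus\Omega)\cap A_j)}\le C\|g\|_{W^{2,\infty}(\Omega\cap\Gamma(2\delta)\cap(A_j)_{3\delta})}\le Cd_j^{-m-N}$; the $3\delta$-enlargement appearing in \cref{cor: local stability of P} is harmless because, for $h$ small, it too stays away from $\operatorname{supp}\eta$.

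The remaining case is $j=1$ with $d_0\in[h,4h)$, so that $d_1=2d_0$ is comparable to $h$ and $A_1$ may meet $\operatorname{supp}\eta$, making the pointwise Green's function bound unavailable. I would instead use global elliptic $W^{2,p}$-regularity for the problems defining $g_m$: $\|g\|_{W^{2,p}(\Omega)}\le C\|\partial^m\eta\|_{L^p(\Omega)}$, together with $\|\partial^m\eta\|_{L^p(\Omega)}\le\|\partial^m\eta\|_{L^\infty}^{1/p'}\|\partial^m\eta\|_{L^1}^{1/p}\le Ch^{-m-N/p'}$, where $\|\partial^m\eta\|_{L^1}\le Ch^{-m}$ and $\|\partial^m\eta\|_{L^\infty}\le Ch^{-m-N}$ come from the construction of $\eta$. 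Combining with the \emph{global} stability of $P$ from \pref{prop: stability of extension}, namely $\|\nabla^2\tilde g\|_{L^p(\Gamma(\delta))}\le C\|g\|_{W^{2,p}(\Omega\cap\Gamma(2\delta))}\le C\|g\|_{W^{2,p}(\Omega)}$, and recalling $d_1\sim h$, yields $\|\nabla^2\tilde g\|_{L^p(\tilde\Omega\cap A_1)}\le Ch^{-m-N/p'}\le Cd_1^{-m-N/p'}$. Adding the interior and skin contributions in each case then proves the estimate for all $j\ge1$.

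I expect the main obstacle to be managing the transition between the two regimes: away from the centre one has sharp pointwise control of $g$ from the Green's function, but at the innermost scale $d_j\sim h$ the kernel is singular and one must switch to global $L^p$-regularity, all while keeping the constants uniform in $x_0$, $d_0$, $h$, and $\partial$. A related technical point is ensuring that the $3\delta$-enlargements produced by the local stability of the extension never intrude into $\operatorname{supp}\eta$ outside the innermost annulus, which forces the smallness condition $3C_{0E}h^2\ll h\le d_0$ on $h$. The two ``measure $\times$ $L^\infty$'' estimates themselves are routine once the pointwise bounds are in place.
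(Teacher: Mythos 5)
Your main argument is essentially the paper's proof: bound the $L^p$-norm by $|\tilde\Omega\cap A_j|^{1/p}\,\|\nabla^2\tilde g\|_{L^\infty(\tilde\Omega\cap A_j)}$, control the $L^\infty$-factor on the skin through the local stability of $P$ (\cref{cor: local stability of P}) and in the interior through the Green's-function bounds of \lref{lem: local Linf estimate of g} on the slightly enlarged annulus, then use $\delta\le C d_j$ and $d_j\le 2\operatorname{diam}\Omega$ to collapse the exponents to $-m-N/p'$; your explicit interior/skin splitting is only cosmetic. You are also right to flag the hypothesis mismatch: the lemma allows $d_0\in[h,1]$ while \lref{lem: local Linf estimate of g} assumes $d_0\in[4h,1]$, so for $j=1$ with $d_0\in[h,4h)$ the separation from $\operatorname{supp}\eta$ can fail — a point the paper's own proof passes over (harmlessly, since in all applications $d_0=Kh$ with $K\ge K_0$ large).

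The one genuine flaw is in your patch for that edge case: the global estimate $\|g\|_{W^{2,p}(\Omega)}\le C\|\partial^m\eta\|_{L^p(\Omega)}$ is a Calder\'on--Zygmund estimate and is false at the endpoints $p=1$ and $p=\infty$, both of which are included in the statement (and the constant degenerates as $p\to1,\infty$). For $p\le 2$ this is easily repaired by H\"older against the $p=2$ bound on $\tilde\Omega\cap A_1$, whose measure is $\le Cd_1^N\sim h^N$, giving $Cd_1^{-m-N/p'}$; but for $p=\infty$ you need a genuinely pointwise bound, e.g.\ a scaled Schauder/interior argument using $[\partial^m\eta]_{C^\alpha}\le Ch^{-m-N-\alpha}$ on a support of diameter $\sim h$ to get $\|\nabla^2 g\|_{L^\infty(\Omega)}\le Ch^{-m-N}$ — plain $W^{2,\infty}$ elliptic regularity is not available. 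Alternatively one can simply impose $d_0\ge 4h$ (as \lref{lem: local Linf estimate of g} does), which is all that is ever used; with that caveat your proposal is sound and coincides with the paper's route.
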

\begin{proof}
	By the H\"older inequality and \lref{lem: local Linf estimate of g} we see that
	\begin{align*}
		\|\nabla^2 \tilde g\|_{L^p(\tilde\Omega \cap A_j)} &\le C |\Omega_h \cap A_j|^{1/p} \|\nabla^2 \tilde g\|_{L^\infty(\tilde\Omega \cap A_j)} \le Cd_j^{N/p} \|g\|_{W^{2, \infty}(\Omega \cap A_j^{(1/4)})} \\
			&\le Cd_j^{N/p} (1 + d_j^{2-m-N} + d_j^{1-m-N} + d_j^{-m-N}) \le Cd_j^{-m-N/p'},
	\end{align*}
	where we have used $d_j \le 2 \operatorname{diam}\Omega$ in the last inequality.
\end{proof}

We also need local estimates in intersections of annuli and boundary-skins (or boundaries).
\begin{lem} \label{lem: local Lp estimate of g in boundary-skins}
	Let $\mathcal A_\Omega(x_0, d_0) = \{\Omega \cap A_j\}_{j=0}^J$ be a dyadic decomposition of $\Omega$ with $d_0 \in [h, 1]$, $\delta = C_{0E}h^2$.
	For $p \in [1, \infty]$, $j = 1, \dots, J$, $m = 0,1$, and $k=0,1,2$, we have
	\begin{align*}
		\|\nabla^k \tilde g\|_{L^p(\Gamma(\delta) \cap A_j)} &\le C (\delta d_j^{N-1})^{1/p} (1 + d_j^{2 - m - k - N}), \\
		\|\nabla^k g\|_{L^p(\Gamma \cap A_j)} + \|\nabla^k \tilde g\|_{L^p(\Gamma_h \cap A_j)} &\le C d_j^{(N-1)/p} (1 + d_j^{2 - m - k - N}),
	\end{align*}
	provided $m+k+N > 2$.  Even when $N=2$ and $m=k=0$, the above estimates hold with the factor $d_j^{2-m-k-N}$ replaced by $|\log d_j|$.
	The constants $C$ are independent of $x_0, d_0, h, j$, and $\partial$.
\end{lem}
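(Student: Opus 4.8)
The plan is to reduce the surface and skin-layer bounds to the interior pointwise estimates of \lref{lem: local Linf estimate of g}, combined with the local stability of the extension operator $P$ from \cref{cor: local stability of P}, and then to pass from an $L^\infty$-bound on an annular piece of $\Gamma(\delta)$, $\Gamma$, or $\Gamma_h$ to the claimed $L^p$-bound simply by multiplying by the $p$-th root of the measure of that piece.

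The first step is the pointwise estimate
\[
	\|\nabla^k \tilde g\|_{L^\infty(\Gamma(\delta)\cap A_j)} \le C(1 + d_j^{2-m-k-N}) \qquad (m+k+N>2),
\]
with $d_j^{2-m-k-N}$ replaced by $|\log d_j|$ in the exceptional case $N=2$, $m=k=0$. Applying \cref{cor: local stability of P} with $D = A_j$ yields $\|\nabla^k \tilde g\|_{L^\infty(\Gamma(\delta)\cap A_j)} \le C\|g\|_{W^{k,\infty}(\Omega \cap \Gamma(2\delta) \cap (A_j)_{3\delta})}$. Because $\delta = C_{0E}h^2$ and $d_{j-1} \ge d_0 \ge h$ give $\delta/d_{j-1} \le C_{0E}h$, for $h$ sufficiently small the $3\delta$-neighborhood $(A_j)_{3\delta}$ is contained in the expanded annulus $A_j^{(1/4)}$; the argument in the proof of \lref{lem: local Linf estimate of g} then applies verbatim with $A_j$ replaced by $A_j^{(1/4)}$ (which is still separated from $\operatorname{supp}\eta$ by a positive distance comparable to $d_{j-1}$), and absorbing the lower-order derivatives via the inequality $d_j^\alpha + d_j^\beta \le Cd_j^{\min\{\alpha,\beta\}}$ (recall $d_j \le 2\operatorname{diam}\Omega$) gives the stated bound.

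The second step collects the three measure estimates $|\Gamma(\delta)\cap A_j| \le C\delta\, d_j^{N-1}$, $|\Gamma\cap A_j| \le C d_j^{N-1}$, and $|\Gamma_h\cap A_j| \le C d_j^{N-1}$. In the local coordinate representation of the appendix the tubular layer is parametrized by $S'\times[-\delta,\delta]$ with a uniformly bounded Jacobian, so its $N$-volume inside the ball $B(x_0;d_j)$ is at most the thickness $2\delta$ times the surface area $\le C d_j^{N-1}$ of the relevant part of $\Gamma$; the same surface-area bound gives the estimate on $\Gamma\cap A_j$, and the one on $\Gamma_h\cap A_j$ follows since $\pi|_{\Gamma_h}$ has Jacobian bounded above and below and maps $\Gamma_h\cap A_j$ into $\Gamma\cap A_j^{(1/4)}$.

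Finally I would assemble the pieces. Since $\Gamma_h\subset\Gamma(\delta)$, the pointwise bound of the first step also controls $\|\nabla^k\tilde g\|_{L^\infty(\Gamma_h\cap A_j)}$, whereas on $\Gamma\cap A_j$ one uses \lref{lem: local Linf estimate of g} directly on $\overline\Omega\cap A_j$, no extension being needed since $g\in C^\infty(\overline\Omega)$. Multiplying these $L^\infty$-bounds by $|\Gamma(\delta)\cap A_j|^{1/p}$, $|\Gamma\cap A_j|^{1/p}$, and $|\Gamma_h\cap A_j|^{1/p}$ respectively, and inserting the measure estimates, produces the three claimed inequalities; the log case is identical with $|\log d_j|$ in place of the power. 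I expect the main obstacle to be the geometric bookkeeping of the first two steps — verifying uniformly in $j$ and $h$ that the $2\delta$- and $3\delta$-neighborhoods appearing in \cref{cor: local stability of P} are absorbed into $A_j^{(1/4)}$, which rests only on $\delta/d_0 \le C_{0E}h \to 0$, and establishing the curvature-dependent but elementary tubular-volume bound $|\Gamma(\delta)\cap A_j|\le C\delta d_j^{N-1}$; the remaining H\"older steps are one-liners.
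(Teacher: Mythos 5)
Your proposal is correct and follows essentially the same route as the paper: bound the $L^p$-norms by the measure of $\Gamma(\delta)\cap A_j$, $\Gamma\cap A_j$, or $\Gamma_h\cap A_j$ raised to the power $1/p$ times an $L^\infty$-bound, the latter obtained from \cref{cor: local stability of P} (with $(A_j)_{3\delta}\subset A_j^{(1/4)}$ for small $h$) and \lref{lem: local Linf estimate of g} applied on the expanded annulus $A_j^{(1/4)}$. The only difference is that you make explicit the tubular-volume and surface-measure bounds and the absorption of the $3\delta$-neighborhood, which the paper leaves implicit.
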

\begin{proof}
	We only consider $m+k+N > 2$ since the other case may be treated similarly.
	From \cref{cor: local stability of P} and \lref{lem: local Linf estimate of g} we deduce that (note that $(A_j)_{3\delta} \subset A_j^{(1/4)}$ for small $h$)
	\begin{align*}
		\|\nabla^k \tilde g\|_{L^p(\Gamma(\delta) \cap A_j)} &\le |\Gamma(\delta) \cap A_j|^{1/p} \|\nabla^k \tilde g\|_{L^\infty(\Gamma(\delta) \cap A_j)} \le C(\delta d_j^{N-1})^{1/p} \|g\|_{W^{k,\infty}(\Omega \cap \Gamma(2\delta) \cap A_j^{(1/4)})} \\
			&\le C(\delta d_j^{N-1})^{1/p} (1 + d_j^{2-m-k-N}),
	\end{align*}
	where we have used $d_j \le 2 \operatorname{diam}\Omega$ in the second line.
	Similarly,
	\begin{align*}
		\|\nabla^k \tilde g\|_{L^p(\Gamma_h \cap A_j)} &\le |\Gamma_h \cap A_j|^{1/p} \|\nabla^k \tilde g\|_{L^\infty(\Gamma_h \cap A_j)} \le Cd_j^{(N-1)/p} \|g\|_{W^{k,\infty}(\Omega \cap \Gamma(2\delta) \cap A_j^{(1/4)})} \\
			&\le Cd_j^{(N-1)/p}(1 + d_j^{2-m-k-N}).
	\end{align*}
	One sees that $\|\nabla^k g\|_{L^p(\Gamma\cap A_j)}$ obeys the same estimate.
\end{proof}

\begin{rem}
	The three lemmas above remain true with $A_j$ replaced by $A_j^{(s)} \, (0\le s < 1)$, where the constants $C$ become dependent on the choice of $s$.
\end{rem}

Especially when $p=1$, the following global estimate in a boundary-skin layer holds.
\begin{cor} \label{cor: boundary-skin estimates for g}
	Let $\delta = C_{0E}h^2$ with sufficiently small $h$.  Then we have
	\begin{equation*}
		\|\tilde g_0\|_{W^{k, 1}(\Gamma(\delta))} \le \begin{cases} C\delta & (k=0), \\ C\delta|\log h| & (k=1), \\ C\delta h^{-1} & (k=2), \end{cases} \qquad
		\|\nabla^k g_0\|_{L^1(\Gamma)} + \|\nabla^k \tilde g_0\|_{L^1(\Gamma_h)} \le \begin{cases} C & (k=0), \\ C|\log h| & (k=1), \\ Ch^{-1} & (k=2), \end{cases}
	\end{equation*}
	and
	\begin{equation*}
		\|\tilde g_1\|_{W^{k, 1}(\Gamma(\delta))} \le \begin{cases} C\delta|\log h| & (k=0), \\ C\delta h^{-1} & (k=1), \\ C\delta h^{-2} & (k=2), \end{cases} \qquad
		\|\nabla^k g_1\|_{L^1(\Gamma)} + \|\nabla^k \tilde g_1\|_{L^1(\Gamma_h)} \le \begin{cases} C|\log h| & (k=0), \\ Ch^{-1} & (k=1), \\ Ch^{-2} & (k=2), \end{cases}
	\end{equation*}
	where $C$ is independent of $x_0, h$, and $\partial$.
\end{cor}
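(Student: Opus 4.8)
The plan is to cover the boundary skin by a dyadic decomposition centred at $x_0$ with the smallest admissible stride, sum the local bounds of \lref{lem: local Lp estimate of g in boundary-skins} (taken at $p=1$) over the annuli, and treat the innermost annulus by hand. Throughout I write $g=g_m$, $\tilde g=\tilde g_m$ with $m=0$ for the first display and $m=1$ for the second. The key preliminary observation is that $\operatorname{supp}\eta$ is separated from the whole boundary skin by a distance of \emph{order} $h$, not merely a distance of order $\delta$: since $\operatorname{dist}(\operatorname{supp}\eta,\partial T)\ge Ch$ by \eqref{eq: properties of eta} for the simplex $T\ni x_0$, and $\Gamma_h=\partial\Omega_h$ cannot meet $\operatorname{int}T$, one gets $\operatorname{dist}(\operatorname{supp}\eta,\Gamma_h)\ge Ch$; as every point of $\Gamma$ lies within $\delta$ of $\Gamma_h$ (recall $\Gamma_h\cup\Gamma\subset\Gamma(\delta)$ with $\delta=C_{0E}h^2$ and that $\pi|_{\Gamma_h}$ maps onto $\Gamma$), this yields $\operatorname{dist}(\operatorname{supp}\eta,\Gamma(2\delta))\ge Ch-3\delta\ge ch$ for $h$ small. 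Feeding this into the kernel estimates for $G$ recalled before \lref{lem: local Linf estimate of g}, and using the stability of $P$ (\pref{prop: stability of extension} and \cref{cor: local stability of P}) for the extended quantity on $\Gamma(\delta)\setminus\Omega$ and $\Gamma_h\setminus\Omega$, we obtain the pointwise bound $\|\nabla^k\tilde g\|_{L^\infty(\Gamma(\delta))}+\|\nabla^k g\|_{L^\infty(\Gamma)}+\|\nabla^k\tilde g\|_{L^\infty(\Gamma_h)}\le C(1+h^{2-m-k-N})$, with $h^{2-m-k-N}$ understood as $1+|\log h|$ in the exceptional case $N=2$, $m=k=0$.

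Next I would fix the dyadic decomposition $\mathcal A_\Omega(x_0,d_0)=\{\Omega\cap A_j\}_{j=0}^J$ with $d_0=4h$ (enlarging $J$ by a bounded amount so that the annuli also cover $\tilde\Omega$). For $j\ge1$, \lref{lem: local Lp estimate of g in boundary-skins} at $p=1$ gives $\|\nabla^k\tilde g\|_{L^1(\Gamma(\delta)\cap A_j)}\le C\delta\,d_j^{N-1}(1+d_j^{2-m-k-N})=C\delta(d_j^{N-1}+d_j^{1-m-k})$, and the same bound without the prefactor $\delta$ for the $\Gamma$- and $\Gamma_h$-norms (with $d_j^{2-m-k-N}$ replaced by $|\log d_j|$ when $N=2$, $m=k=0$). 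For $j=0$, since $A_0\subset B(x_0;4h)$ and the skin is a tube of thickness $2\delta$, one has $|\Gamma(\delta)\cap A_0|\le C\delta h^{N-1}$ and $|\Gamma\cap A_0|+|\Gamma_h\cap A_0|\le Ch^{N-1}$, so multiplying the $L^\infty$-bound of the first step by these measures reproduces exactly the $j\ge1$ expressions with $d_j$ replaced by $h$. Summing over $j$ and invoking \eqref{eq: geometric sequence} with $d_0=4h$ — so $\sum_j d_j^{N-1}\le C$, $\sum_j1\le C|\log h|$, $\sum_j d_j^{-1}\le Ch^{-1}$, $\sum_j d_j^{-2}\le Ch^{-2}$, and $\sum_j d_j^{N-1}|\log d_j|\le C$ since $d_j^{N-1}|\log d_j|\le Cd_j^{N-3/2}$ with $N-\frac32\ge\frac12>0$ — the $d_j^{N-1}$-part contributes $C$ and the $d_j^{1-m-k}$-part contributes $C$, $C|\log h|$, $Ch^{-1}$, or $Ch^{-2}$ according as $m+k=0,1,2,3$. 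This produces the asserted bounds on $\|\nabla^k g_m\|_{L^1(\Gamma)}$ and $\|\nabla^k\tilde g_m\|_{L^1(\Gamma_h)}$ for $k=0,1,2$, and their $\delta$-multiples on $\|\nabla^k\tilde g_m\|_{L^1(\Gamma(\delta))}$; the $W^{k,1}$-statements then follow immediately, the top-order derivative term dominating each sum $\sum_{l\le k}\|\nabla^l\cdot\|_{L^1}$.

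The only genuinely delicate point is the innermost-annulus estimate in the second step, which forces the order-$h$ separation of $\operatorname{supp}\eta$ from the boundary skin established in the first step: the weaker separation $\operatorname{dist}(\operatorname{supp}\eta,\Gamma(2\delta))>0$ noted after \eqref{eq: properties of eta} would already be insufficient — for instance it would only give $\|\nabla^2\tilde g_0\|_{L^1(\Gamma(\delta)\cap A_0)}\lesssim\delta h^{N-1}h^{2(2-N-2)}$, far worse than the required $C\delta h^{-1}$ when $N=2$. Once that separation is in hand, the remainder is the routine geometric-sum bookkeeping of \eqref{eq: geometric sequence}.
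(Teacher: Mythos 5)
Your proposal is correct and follows essentially the same route as the paper's own proof: a dyadic decomposition $\mathcal A_\Omega(x_0,4h)$, the local bounds of \lref{lem: local Lp estimate of g in boundary-skins} with $p=1$ for $j\ge1$, a separate treatment of the innermost annulus based on $\operatorname{dist}(\operatorname{supp}\eta,\Gamma(2\delta))\ge Ch$ coming from \eqref{eq: properties of eta}, and summation via \eqref{eq: geometric sequence}. Your explicit justification of the order-$h$ separation and of the $j=0$ measure bounds simply spells out details the paper leaves implicit.
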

\begin{proof}
	We only consider the estimates in $W^{k,1}(\Gamma(\delta))$ because the boundary estimates can be derived similarly.
	With a dyadic decomposition $\mathcal A_\Omega(x_0, 4h) = \{\Omega \cap A_j\}_{j=0}^J$, we compute $\sum_{j=0}^J \|\tilde g\|_{W^{k,1}(\Gamma(\delta) \cap A_j)}$.
	When $j \ge 1$, it follows from \lref{lem: local Lp estimate of g in boundary-skins} that
	\begin{equation} \label{eq1: Cor B.1}
		\|\tilde g\|_{W^{k, 1}(\Gamma(\delta) \cap A_j)} \le 
		\begin{cases}
			C(\delta d_j^{N-1}) d_j^{2-m-k-N} & (m+k+N>2), \\
			C(\delta d_j^{N-1}) |\log d_j| & (N=2, m=k=0).
		\end{cases}
	\end{equation}
	When $j = 0$, notice that $\operatorname{dist}(\operatorname{supp} \eta, \Gamma(2\delta)) \ge Ch = \frac{C}4 d_0$ for sufficiently small $h$, which results from \eref{eq: properties of eta}.
	Then, calculating in the same way as above, we find that \eref{eq1: Cor B.1} holds for $j=0$ as well.
	Adding up the above estimate for $j = 0, \dots, J$ and using \eref{eq: geometric sequence}, we obtain the desired result.
\end{proof}
\begin{rem}
	We could improve the above estimates for $g_0$ when $k=1$ if the Dirichlet boundary condition were considered.
	In fact, the Green's function $G_D(x, y)$ in this case is known to satisfy $|\nabla_x G_D(x, y)| \le C\operatorname{dist}(y, \partial\Omega) |x - y|^{-N}$ (see \cite[Theorem 3.3(v)]{GrWi82}).
	Then, taking a dyadic decomposition with $d_0 = \operatorname{dist}( \operatorname{supp}\eta, \partial\Omega ) \ge Ch$, we see that
	\begin{equation*}
		\|\nabla\tilde g_0\|_{L^1(\Gamma_h)} \le C\sum_{j=0}^J d_j^{N-1} \|\nabla \tilde g_0\|_{L^\infty(\Gamma_h \cap A_j)}
		\le C \operatorname{dist}( \operatorname{supp}\eta, \partial\Omega ) \sum_{j=0}^J d_j^{-1} \le Cd_0 d_0^{-1} = C,
	\end{equation*}
	and that $\|\nabla\tilde g_0\|_{L^1(\Gamma(\delta))} \le C\delta$.
	However, such an auxiliary Green's function estimate is not available in the case of the Neumann boundary condition.
	A similar inequality is proved in \cite[eq.\ (5.8)]{ScWa82} by a different method using the maximum principle, but its extension to the Neumann case seems non-trivial.
\end{rem}

\subsection{Estimates for $\tilde w$}
Let us recall the situation of \sref{sec5}: fixing a dyadic decomposition $\mathcal A_\Omega(x_0, d_0)$ and an annulus $A_j \, (0\le j\le J)$, we have introduced the solution $w \in C^\infty(\overline\Omega)$ of \eref{eq: PDE for w} for arbitrary $\varphi \in C_0^\infty(\Omega_h \cap A_j)$ such that $\|\varphi\|_{L^2(\Omega_h \cap A_j)} = 1$.
Hence $w$ is represented, using the Green's function $G(x, y)$, as
\begin{equation*}
	w(x) = \int_{\Omega \cap \Omega_h \cap A_j} G(x, y) \varphi(y) \, dy \qquad (x \in\Omega).
\end{equation*}
Then we obtain the following local $L^\infty$-estimates away from $A_j$:

\begin{lem} \label{lem: estimate for w}
	For $k=0,1,2$ and $\delta = C_{0E}h^2$, we have
	\begin{equation*}
		\|\tilde w\|_{W^{k,\infty}(\tilde\Omega \setminus A_j^{(1/2)})} \le \begin{cases} Cd_j^{2-k-N/2} & (N+k>2), \\ Cd_j (1+|\log d_j|) & (N=2, k=0), \end{cases}
	\end{equation*}
	where $\tilde\Omega := \Omega\cup\Gamma(\delta)$, $\tilde w =: Pw$, and $C$ is independent of $h,x_0,d_0$, and $j$.
\end{lem}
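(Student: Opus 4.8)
The plan is to bound $w$ itself on a region slightly larger than $\tilde\Omega\setminus A_j^{(1/2)}$ directly from the Green's-function representation, and then transfer the bound to $\tilde w=Pw$ via the local stability of the extension operator. The first ingredient is a separation estimate: there is an absolute constant $c\in(0,1)$ such that $|x-y|\ge c\,d_j$ whenever $x\in\Omega\setminus A_j^{(1/4)}$ and $y\in A_j$. For $j\ge1$ this follows from the triangle inequality in two cases: if $|x-x_0|<\tfrac78 d_{j-1}$ then $|x-y|\ge d_{j-1}-\tfrac78 d_{j-1}=\tfrac1{16}d_j$, while if $|x-x_0|>\tfrac54 d_j$ then $|x-y|\ge\tfrac54 d_j-d_j=\tfrac14 d_j$; for $j=0$ only the second case occurs (with $A_0$ a ball), giving $c=\tfrac14$.

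Next I would derive pointwise bounds for $w$ away from $A_j$. Since $\operatorname{supp}\varphi\subset\Omega_h\cap A_j$ and $\|\varphi\|_{L^1(\Omega_h\cap A_j)}\le|\Omega_h\cap A_j|^{1/2}\|\varphi\|_{L^2(\Omega_h\cap A_j)}\le Cd_j^{N/2}$, differentiating under the integral in $w(x)=\int_{\Omega\cap\Omega_h\cap A_j}G(x,y)\varphi(y)\,dy$ and invoking the recalled derivative estimates for $G$ gives, for $x\in\Omega\setminus A_j^{(1/4)}$,
\[
	|\nabla^k w(x)|\le\|\varphi\|_{L^1(\Omega_h\cap A_j)}\,\sup_{|x-y|\ge c\,d_j}|\nabla_x^k G(x,y)|\le Cd_j^{N/2}\bigl(1+d_j^{2-k-N}\bigr)
\]
when $N+k>2$, with $1+d_j^{2-k-N}$ replaced by $1+|\log(c\,d_j)|$ when $N=2,\,k=0$. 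Using $d_j\le2\operatorname{diam}\Omega$ to absorb the additive constant and the factor $c$ inside the logarithm, this yields $\|w\|_{W^{k,\infty}(\Omega\setminus A_j^{(1/4)})}\le Cd_j^{2-k-N/2}$, respectively $\le Cd_j(1+|\log d_j|)$, which are exactly the claimed right-hand sides.

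To conclude, decompose $\tilde\Omega\setminus A_j^{(1/2)}=(\Omega\setminus A_j^{(1/2)})\cup(\Gamma(\delta)\setminus A_j^{(1/2)})$. On the first piece $\tilde w=Pw$ coincides with $w$, and $\Omega\setminus A_j^{(1/2)}\subset\Omega\setminus A_j^{(1/4)}$, so the bound of the previous step applies. On the second piece I would apply \cref{cor: local stability of P} with $D=\mathbb R^N\setminus A_j^{(1/2)}$, which bounds $\|\tilde w\|_{W^{k,\infty}(\Gamma(\delta)\cap D)}$ by $C\|w\|_{W^{k,\infty}(\Omega\cap\Gamma(2\delta)\cap D_{3\delta})}$; it then remains to check that $D_{3\delta}\subset(A_j^{(1/4)})^c$. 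This holds because the radial gap between $(A_j^{(1/2)})^c$ and $A_j^{(1/4)}$ is at least $\tfrac1{16}d_j$, whereas $3\delta=3C_{0E}h^2\le\tfrac1{16}d_j$ once $h$ is small (recall $d_j\ge d_0$, with $d_0$ of order $h$). Hence $\Omega\cap\Gamma(2\delta)\cap D_{3\delta}\subset\Omega\setminus A_j^{(1/4)}$, the previous step again applies, and combining the two pieces proves the lemma.

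The mathematical content here is light: a kernel estimate together with a Cauchy--Schwarz bound over an annulus of volume of order $d_j^N$. The only thing needing real care is the geometric bookkeeping --- the nested dilations $A_j^{(1/4)}\subset A_j^{(1/2)}$, the $3\delta$-thickening forced by the extension operator (for which one must genuinely have $h^2\ll d_j$, available since $d_0$ is a fixed multiple of $h$), and the mildly degenerate case $j=0$ where $A_0$ is a ball. I expect that bookkeeping, rather than any substantive difficulty, to be the main obstacle.
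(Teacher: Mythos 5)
Your argument is correct and follows essentially the same route as the paper's proof: the Green's-function representation with the kernel bounds, a Cauchy--Schwarz bound $\|\varphi\|_{L^1}\le|A_j|^{1/2}\|\varphi\|_{L^2}\le Cd_j^{N/2}$, the separation $|x-y|\ge cd_j$ for $x\notin A_j^{(1/4)}$, and the reduction $\|\tilde w\|_{W^{k,\infty}(\tilde\Omega\setminus A_j^{(1/2)})}\le C\|w\|_{W^{k,\infty}(\Omega\setminus A_j^{(1/4)})}$ via \cref{cor: local stability of P}. The only difference is that you spell out the geometric bookkeeping (the $\frac{1}{16}d_j$ gap, the $3\delta$-thickening, the $j=0$ ball case) that the paper leaves implicit in its first inequality, which is fine.
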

\begin{proof}
	We focus on the case $N+k>2$; the other case is similar.
	We find that
	\begin{align*}
		\|\tilde w\|_{W^{k,\infty}(\tilde\Omega \setminus A_j^{(1/2)})} &\le C\|w\|_{W^{k,\infty}(\Omega \setminus A_j^{(1/4)})}
		= C\sum_{l=0}^k \sup_{x \in \Omega \setminus A_j^{(1/4)}} \left| \int_{\Omega \cap \Omega_h \cap A_j} \nabla_x^l G(x, y) \varphi(y) \, dy \right| \\
			&\le C \sum_{l=0}^k |\Omega \cap \Omega_h \cap A_j|^{1/2} \sup_{|x - y| \ge d_{j-1}/8} |\nabla_x^l G(x, y)| \, \|\varphi\|_{L^2(\Omega \cap \Omega_h \cap A_j)} \\
			&\le Cd_j^{N/2} (1 + d_j^{2-N} + \cdots + d_j^{2-k-N}) \le Cd_j^{2-k-N/2},
	\end{align*}
	where we have used $d_j \le 2 \operatorname{diam}\Omega$ in the last inequality.
\end{proof}
\begin{rem}
	The lemma remains true with $A_j^{(1/2)}$ replaced by $A_j^{(s)} \, (0<s\le 1)$, where the constant $C$ becomes dependent on the choice of $s$.
\end{rem}



\end{document}